\documentclass[a4paper,
fontsize=11pt,%
oneside,%
numbers=enddot]{scrartcl}
\KOMAoptions{DIV=11}   % size of type area (see user guide)
%\setparsizes{1em}{0.5\baselineskip}{0pt plus 1fil}
%\usepackage[color,notref,notcite]{showkeys}
%\usepackage{refcheck}  % check unused refs if showkeys activated
%\usepackage[obeyspaces,hyphens,spaces]{url}
\tolerance  720
%%%%%%%%%%%%%%%%%%%%%%%%%%%%%%%%%%%%%%%%%%%%%%%%%%%%%%%%%%%%%%%%%%
% load AMS packages, math and text fonts
%%%%%%%%%%%%%%%%%%%%%%%%%%%%%%%%%%%%%%%%%%%%%%%%%%%%%%%%%%%%%%%%%%
\usepackage[utf8]{inputenc}
\usepackage[T1]{fontenc}
\usepackage{graphicx}
\usepackage{textcomp}
\usepackage[fleqn]{amsmath}
\usepackage{amsthm}
\usepackage{amssymb}
\usepackage{amsfonts}
\usepackage{pifont}        %for dingolist
\usepackage{libertine}
\usepackage[libertine,
slantedGreek,
nosymbolsc,
nonewtxmathopt,
subscriptcorrection]{newtxmath}
\usepackage[scaled=0.95,varqu,varl]{inconsolata}
\frenchspacing
\usepackage[scr=boondox]{mathalpha}   % for \mathscr
\usepackage{euscript}   % for \EuScript
\usepackage{leftindex}
\allowdisplaybreaks[1]  % allows align to split
\numberwithin{equation}{section}    % *must* be *before* cleveref!
%%%%%%%%%%%%%%%%%%%%%%%%%%%%%%%%%%%%%%%%%%%%%%%%%%%%%%%%%%%%%%%%%%
% colors
%%%%%%%%%%%%%%%%%%%%%%%%%%%%%%%%%%%%%%%%%%%%%%%%%%%%%%%%%%%%%%%%%%
\usepackage[svgnames,hyperref]{xcolor}
\definecolor{orng}{HTML}{F35400}
\definecolor{bleu}{HTML}{BCE6F2}
\definecolor{dblue}{HTML}{0455BF}
\definecolor{dgreen}{HTML}{02724A}
\definecolor{dgreen2}{HTML}{025951}
\definecolor{dred}{HTML}{D90404}
\definecolor{dviolet}{HTML}{42208C}
\definecolor{labelkey}{HTML}{025951}
\definecolor{refkey}{HTML}{025951}
\definecolor{refkey}{rgb}{0,0.6,0.0}
\definecolor{Brown}{rgb}{0.45,0.0,0.05}
\definecolor{dgreen}{rgb}{0.00,0.49,0.00}
\definecolor{dblue}{rgb}{0,0.18,0.75}
\definecolor{lblue}{rgb}{0,0.7,0.75}
\definecolor{dviolet}{HTML}{9400D3}
\definecolor{pblue}{rgb}{0.1176,0.5647,1}
\definecolor{nblue}{rgb}{0.2,0.3,1}
\definecolor{pgreen}{rgb}{0.1961,0.8039,0.1961}
\definecolor{ngreen}{rgb}{0.0,0.6,0.3}
\definecolor{pred}{rgb}{1.0,0.2706,0.0}
\definecolor{magenta}{HTML}{ff00ff}
%%%%%%%%%%%%%%%%%%%%%%%%%%%%%%%%%%%%%%%%%%%%%%%%%%%%%%%%%%%%%%%%%%
% tikz 
%%%%%%%%%%%%%%%%%%%%%%%%%%%%%%%%%%%%%%%%%%%%%%%%%%%%%%%%%%%%%%%%%%
\usepackage{tikz,tkz-euclide,pgfplots}
\usetikzlibrary{arrows,decorations}
%%%%%%%%%%%%%%%%%%%%%%%%%%%%%%%%%%%%%%%%%%%%%%%%%%%%%%%%%%%%%%%%%%
% title and sections style
%%%%%%%%%%%%%%%%%%%%%%%%%%%%%%%%%%%%%%%%%%%%%%%%%%%%%%%%%%%%%%%%%%
\addtokomafont{section}{\centering}

%%%%%%%%%%%%%%%%%%%%%%%%%%%%%%%%%%%%%%%%%%%%%%%%%%%%%%%%%%%%%%%%%%
% itemized lists
%%%%%%%%%%%%%%%%%%%%%%%%%%%%%%%%%%%%%%%%%%%%%%%%%%%%%%%%%%%%%%%%%%
\usepackage{enumitem}
\setlist{itemsep=-2.0pt}

%%%%%%%%%%%%%%%%%%%%%%%%%%%%%%%%%%%%%%%%%%%%%%%%%%%%%%%%%%%%%%%%%%
% hyperref
%%%%%%%%%%%%%%%%%%%%%%%%%%%%%%%%%%%%%%%%%%%%%%%%%%%%%%%%%%%%%%%%%%
\usepackage{upref}  % upright number when using ref in theorems
\usepackage{hyperref}
\hypersetup{colorlinks=true,
linktocpage=true,
linkcolor=dblue,
citecolor=dgreen,
urlcolor=dred,
pdfencoding=auto,
hypertexnames=false}
%%%%%%%%%%%%%%%%%%%%%%%%%%%%%%%%%%%%%%%%%%%%%%%%%%%%%%%%%%%%%%%%%%
% theorem-like environments
%%%%%%%%%%%%%%%%%%%%%%%%%%%%%%%%%%%%%%%%%%%%%%%%%%%%%%%%%%%%%%%%%%
\makeatletter
\g@addto@macro\th@plain{
\thm@headfont{\bfseries\sffamily}
\thm@notefont{}}
\g@addto@macro\th@definition{
\thm@headfont{\bfseries\sffamily}
\thm@notefont{}}
\g@addto@macro\th@remark{
\thm@headfont{\bfseries\sffamily}
\thm@notefont{}}
\makeatother
\theoremstyle{plain}
\newtheorem{theorem}{Theorem}[section]
\newtheorem{proposition}[theorem]{Proposition}
\newtheorem{corollary}[theorem]{Corollary}
\newtheorem{lemma}[theorem]{Lemma}

\theoremstyle{definition}

\newtheorem{example}[theorem]{Example}
\newtheorem{problem}[theorem]{Problem}

\theoremstyle{remark}
\newtheorem{remark}[theorem]{Remark}

% redefine the qed symbol

%%%%%%%%%%%%%%%%%%%%%%%%%%%%%%%%%%%%%%%%%%%%%%%%%%%%%%%%%%%%%%%%%%
% delimiters
%%%%%%%%%%%%%%%%%%%%%%%%%%%%%%%%%%%%%%%%%%%%%%%%%%%%%%%%%%%%%%%%%%
\usepackage[extdef=true]{delimset}
\DeclareMathDelimiterSet{\scal}[2]{
\selectdelim[l]<{#1}
\mathpunct{}\selectdelim[p]|
{#2}\selectdelim[r]>}
\DeclareMathDelimiterSet{\EC}[2]{
\mathsf{E}\selectdelim[l]({#1}
\mathpunct{}\selectdelim[p]|
{#2}\selectdelim[r])}

\newcommand{\menge}[2]{\bigl\{{#1}\mid{#2}\bigr\}} 
\DeclareMathDelimiterSet{\Menge}[2]{\selectdelim[l]\{
{#1}\selectdelim[m]|{#2}\selectdelim[r]\}}
%blank norm, pairing, and scalar product

%%%%%%%%%%%%%%%%%%%%%%%%%%%%%%%%%%%%%%%%%%%%%%%%%%%%%%%%%%%%%%%%%%
% better-looking integral sign
%%%%%%%%%%%%%%%%%%%%%%%%%%%%%%%%%%%%%%%%%%%%%%%%%%%%%%%%%%%%%%%%%%
\makeatletter
\def\upintkern@{\mkern-7mu\mathchoice{\mkern-3.5mu}{}{}{}}
\def\upintdots@{\mathchoice{\mkern-4mu\@cdots\mkern-4mu}%
{{\cdotp}\mkern1.5mu{\cdotp}\mkern1.5mu{\cdotp}}%
{{\cdotp}\mkern1mu{\cdotp}\mkern1mu{\cdotp}}%
{{\cdotp}\mkern1mu{\cdotp}\mkern1mu{\cdotp}}}
\makeatother
\DeclareFontFamily{OMX}{mdbch}{}
\DeclareFontShape{OMX}{mdbch}{m}{n}{ <->s * [0.8]  mdbchr7v }{}
\DeclareFontShape{OMX}{mdbch}{b}{n}{ <->s * [0.8]  mdbchb7v }{}
\DeclareFontShape{OMX}{mdbch}{bx}{n}{<->ssub * mdbch/b/n}{}
\DeclareSymbolFont{uplargesymbols}{OMX}{mdbch}{m}{n}
\SetSymbolFont{uplargesymbols}{bold}{OMX}{mdbch}{b}{n}
\DeclareMathSymbol{\upintop}{\mathop}{uplargesymbols}{82}
\DeclareMathSymbol{\upointop}{\mathop}{uplargesymbols}{"48}
\makeatletter
\renewcommand{\int}{\DOTSI\upintop\ilimits@}
\renewcommand{\oint}{\DOTSI\upointop\ilimits@}
\makeatother
%%%%%%%%%%%%%%%%%%%%%%%%%%%%%%%%%%%%%%%%%%%%%%%%%%%%%%%%%%%%%%%%%%
% maths fonts
%%%%%%%%%%%%%%%%%%%%%%%%%%%%%%%%%%%%%%%%%%%%%%%%%%%%%%%%%%%%%%%%%%
\newcommand{\RR}{\mathbb{R}}

\newcommand{\KKS}{\ensuremath{\boldsymbol{\mathsf K}}}

\newcommand{\XXS}{\ensuremath{\boldsymbol{\mathsf X}}}
\newcommand{\YYS}{\ensuremath{\boldsymbol{\mathsf{Y}}}}
\newcommand{\GGS}{\ensuremath{{\boldsymbol{\mathsf G}}}}
\newcommand{\XS}{\mathsf{X}}
\newcommand{\YS}{\mathsf{Y}}
\newcommand{\HS}{\mathsf{H}}
\newcommand{\GS}{\mathsf{G}}
\newcommand{\ZS}{\mathsf{Z}}
\newcommand{\nS}{{\mathsf{n}}}
\newcommand{\nnn}{\mathsf{n}\in\mathbb{N}}
\newcommand{\iS}{\mathsf{i}}

\newcommand{\jS}{\mathsf{j}}
\newcommand{\kS}{\mathsf{k}}
\newcommand{\lS}{\mathsf{l}}
\newcommand{\LS}{\mathsf{L}}
\newcommand{\VS}{\mathsf{S}}
\newcommand{\LLS}{\boldsymbol{\mathsf{L}}}
\newcommand{\BE}{\EuScript{B}}
\newcommand{\FE}{\EuScript{F}}

%%%%%%%%%%%%%%%%%%%%%%%%%%%%%%%%%%%%%%%%%%%%%%%%%%%%%%%%%%%%%%%%%%
% various sets
%%%%%%%%%%%%%%%%%%%%%%%%%%%%%%%%%%%%%%%%%%%%%%%%%%%%%%%%%%%%%%%%%%
\newcommand{\pinf}{{+}\infty}
\newcommand{\minf}{{-}\infty}
\newcommand{\zeroun}{\intv[o]{0}{1}}
\newcommand{\rzeroun}{\intv[l]{0}{1}}

\newcommand{\RX}{\intv[l]0{\minf}{\pinf}}

\newcommand{\RPP}{\intv[o]0{0}{\pinf}}

\newcommand{\emp}{\varnothing}

%%%%%%%%%%%%%%%%%%%%%%%%%%%%%%%%%%%%%%%%%%%%%%%%%%%%%%%%%%%%%%%%%%
% operators
%%%%%%%%%%%%%%%%%%%%%%%%%%%%%%%%%%%%%%%%%%%%%%%%%%%%%%%%%%%%%%%%%%
\newcommand{\Sum}{\displaystyle\sum}

\newcommand{\Frac}[2]{\displaystyle{\dfrac{#1}{#2}}} 
\newcommand{\minimize}[2]{\underset{\substack{{#1}}}
{\operatorname{minimize}}\;\;#2}

\newcommand{\pushfwd}%
{\ensuremath{\mbox{\Large$\,\triangleright\,$}}}

\DeclareMathOperator{\Argmin}{Argmin}

\newcommand{\Id}{\mathsf{Id}}
\newcommand{\ID}{\boldsymbol{\mathsf{Id}}}

\DeclareMathOperator{\card}{card}

\DeclareMathOperator{\dom}{dom}

\DeclareMathOperator{\gra}{gra}
\DeclareMathOperator{\zer}{zer}

\DeclareMathOperator{\reli}{ri}

\DeclareMathOperator{\prox}{prox}
\DeclareMathOperator{\proj}{proj}

\newcommand{\PP}{\mathsf{P}}

\DeclareMathOperator{\dft}{DFT}
\DeclareMathOperator{\idft}{IDFT}
%-------------------MATH MISC-----------------------------------

\renewcommand{\leq}{\leqslant}
\renewcommand{\geq}{\geqslant}
\newcommand{\exi}{\exists\,}
\newcommand{\weakly}{\rightharpoonup}

\newcommand{\Pas}{\text{\normalfont$\PP$-a.s.}}
%%%%%%%%%%%%%%%%%%%%%%%%%%%%%%%%%%%%%%%%%%%%%%%%%%%%%%%%%%%%%%%%%%
% abstract and keywords
%%%%%%%%%%%%%%%%%%%%%%%%%%%%%%%%%%%%%%%%%%%%%%%%%%%%%%%%%%%%%%%%%%
\renewenvironment{abstract}{%
\vspace*{-0.50cm}
\small
\quotation%
\noindent%
{\normalfont\bfseries\sffamily
\nobreak\abstractname\ }%
}{%
\endquotation%
\medskip
}
\renewcommand{\abstractname}{Abstract.}
\newcommand\keywordsname{Keywords.}
\newenvironment{keywords}
{\renewcommand\abstractname{\keywordsname}\begin{abstract}}
{\end{abstract}}
%%%%%%%%%%%%%%%%%%%%%%%%%%%%%%%%%%%%%%%%%%%%%%%%%%%%%%%%%%%%%%%%%%
% authors lines
%%%%%%%%%%%%%%%%%%%%%%%%%%%%%%%%%%%%%%%%%%%%%%%%%%%%%%%%%%%%%%%%%%
\usepackage[auth-sc]{authblk}
\newcommand{\email}[1]{\href{mailto:#1}{\nolinkurl{#1}}}
\renewcommand*\Affilfont{\normalfont\normalsize}
\newcommand\affilcr{\protect\\ \protect\Affilfont}
\makeatletter
\renewcommand\AB@affilsepx{\protect\\[0.5em]}
\makeatother
%%%%%%%%%%%%%%%%%%%%%%%%%%%%%%%%%%%%%%%%%%%%%%%%%%%%%%%%%%%%%%%%%%
% authors' notes
%%%%%%%%%%%%%%%%%%%%%%%%%%%%%%%%%%%%%%%%%%%%%%%%%%%%%%%%%%%%%%%%%%

%%%%%%%%%%%%%%%%%%%%%%%%%%%%%%%%%%%%%%%%%%%%%%%%%%%%%%%%%%%%%%%%%%
% authors' info
%%%%%%%%%%%%%%%%%%%%%%%%%%%%%%%%%%%%%%%%%%%%%%%%%%%%%%%%%%%%%%%%%%
\author[1]{Patrick L. Combettes}
\affil[1]{North Carolina State University
\affilcr
Department of Mathematics
\affilcr
Raleigh, NC 27695, USA
\affilcr
\email{plc@math.ncsu.edu}
}
\author[2]{Javier I. Madariaga}
\affil[2]{North Carolina State University
\affilcr
Department of Mathematics
\affilcr
Raleigh, NC 27695, USA
\affilcr
\email{jimadari@ncsu.edu}
}

\begin{document}

\title{Almost-Surely Convergent Randomly Activated Monotone 
Operator Splitting Methods\thanks{Contact author: P. L.
Combettes. Email: \email{plc@math.ncsu.edu}. 
This work was supported by the National
Science Foundation under grant CCF-2211123.
}}

\date{~}

\maketitle

\begin{abstract}
We propose stochastic splitting algorithms for solving large-scale
composite inclusion problems involving monotone and linear
operators. They activate at each iteration blocks of randomly
selected resolvents of monotone operators and, unlike existing
methods, achieve almost sure convergence of the iterates to a
solution without any regularity assumptions or knowledge of the
norms of the linear operators. Applications to image recovery and
machine learning are provided.
\end{abstract}

\begin{keywords}
Convex optimization,
data analysis,
machine learning,
monotone inclusion,
signal recovery,
splitting algorithm, 
stochastic algorithm.
\end{keywords}

\newpage

\section{Introduction}
\label{sec:1}

The problem of extracting information from data is at the core of
many tasks in signal processing, inverse problems, and machine
learning. A prevalent methodology to seek meaningful solutions is
to build a mathematical model that incorporates the prior knowledge
about the object of interest $\overline{\mathsf{x}}$ and the data, 
which consist of observations mathematically or physically related 
to $\overline{\mathsf{x}}$ (see Figure~\ref{fig:1}). 	
\begin{figure}[b!]
\begin{center}
\scalebox{0.8} % Change this value to rescale the drawing.
{
\begin{tikzpicture}
\draw[line width=0.05cm, fill=bleu] (5,5.1) ellipse (1.75
and 0.5);
\draw[line width=0.05cm, fill=bleu] (13,5.1) ellipse (1.75
and 0.5);
\draw[line width=0.05cm, fill=bleu] (9,3.4) ellipse (2.05
and 0.6);
\draw[line width=0.05cm, fill=bleu] (9,1.6) ellipse (1.75
and 0.5);
\draw[line width=0.05cm, fill=bleu] (9,-0.2) ellipse (1.75
and 0.5);
\node at (5,5.1) {Prior knowledge};
\node at (13,5.1) {Observations};
\node at (9,3.4) {Mathematical model};
\node at (9,1.6) {Algorithm};
\node at (9,-0.2) {Solution};
\draw[line width=0.05cm,->] (6.0,4.6) -- (7.2,3.75);
\draw[line width=0.05cm,->] (12.0,4.6) -- (10.8,3.75);
\draw[line width=0.05cm,->] (9.0,2.7) -- (9.0,2.2);
\draw[line width=0.05cm,->] (9.0,1.0) -- (9.0,0.4);
\end{tikzpicture}
}
\end{center}
\caption{Data processing flowchart.}
\label{fig:1}
\end{figure}
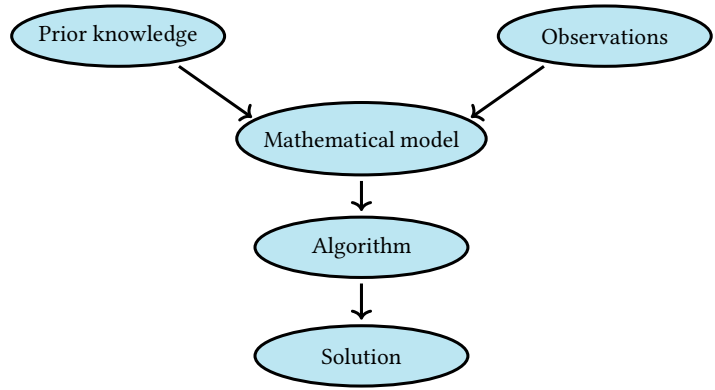
Since the first mathematical formalizations of Euler \cite{Eule49}
and Mayer \cite{Maye48} in the late 1740s, which contained the
embryo of least-squares data fitting techniques, convex
minimization formulations have been a tool of choice. The following
problem encapsulates a broad range of minimization models found in
data analysis problems
\cite{Andr77,Bach12,Bane20,Benn18,Cham16,Banf11,Smms05,%
Glow16,Judi20,Theo20} (see Section~\ref{sec:21} for notation).

\begin{problem}
\label{prob:10}
$\HS$ is a separable real Hilbert space and 
$\mathsf{f}\in\upGamma_0(\HS)$. For every
$\kS\in\{1,\ldots,\mathsf{p}\}$, $\GS_{\kS}$ is a separable real
Hilbert space, $\mathsf{g}_{\kS}\in\upGamma_0(\GS_{\kS})$, and
$0\neq\LS_{\kS}\colon\HS\to\GS_{\kS}$ is linear and bounded. It is
assumed that $\zer(\partial\mathsf{f}+\sum_{\kS=1}^\mathsf{p}
\LS^*_{\kS}\circ(\partial\mathsf{g}_{\kS})\circ\LS_{\kS})\neq\emp$. 
The task is to
\begin{equation}
\label{e:p10}
\minimize{\mathsf{x}\in\HS}{\mathsf{f}(\mathsf{x})+
\sum_{\kS=1}^\mathsf{p}\mathsf{g}_{\kS}
(\LS_{\kS}\mathsf{x})}.
\end{equation}
\end{problem}

In recent years, an increasing number of problem formulations have
emerged, which cannot be naturally reduced to tractable
minimization problems and which are best captured by more general
notions of equilibria provided by inclusion problems
\cite{Svva20,Sign21,Joat21,Siim22,Flor23,Judi19,Pesq21,Wins20,%
Yich20}. 
A formulation covering such models, as well as
Problem~\ref{prob:10}, is the following composite monotone
inclusion formulation.

\begin{problem}
\label{prob:1}
$\HS$ is a separable real Hilbert space and 
$\mathsf{A}\colon\HS\to 2^{\HS}$ is maximally monotone.
For every $\kS\in\{1,\ldots,\mathsf{p}\}$, $\GS_{\kS}$ is a 
separable real Hilbert space, 
$\mathsf{B}_{\kS}\colon\GS_{\kS}\to 2^{\GS_{\kS}}$ is maximally
monotone, and $0\neq\LS_{\kS}\colon\HS\to\GS_{\kS}$ is linear and
bounded. It is assumed that $\mathsf{Z}=\zer(\mathsf{A}+
\sum_{\kS=1}^\mathsf{p}\LS_{\kS}^*\circ\mathsf{B}_{\kS}\circ
\LS_{\kS})\neq\emp$. The task is to
\begin{equation}
\label{e:p1}
\text{find}\;\;\mathsf{x}\in\HS\;\;\text{such that}\;\;
\mathsf{0}\in\mathsf{A}\mathsf{x}+\sum_{\kS=1}^\mathsf{p}
\LS_{\kS}^*\brk1{\mathsf{B}_{\kS}(\LS_{\kS}\mathsf{x})}.
\end{equation}
\end{problem}

Splitting algorithms for solving Problem~\ref{prob:1} operate on
the principle that each nonlinear and linear operator is used
separately over the course of the iterations. Since the nonlinear
operators are general set-valued monotone operators, they must be
activated through their resolvent. Various deterministic operator
splitting methods are available to solve Problem~\ref{prob:1}, most
of which require the activation of the resolvents of the
$\mathsf{p}+1$ operators $\mathsf{A}$ and
$(\mathsf{B}_{\kS})_{1\leq\kS\leq\mathsf{p}}$ at each iteration
\cite{Acnu24}. Our specific focus is on solving
Problem~\ref{prob:1} in instances when $\mathsf{p}$ is large, as is
often the case in data analysis problems. In
such scenarios, memory and computing power limitations make the
execution of standard monotone operator splitting algorithms
inefficient if not simply impossible. We aim at designing monotone
splitting algorithms which are stochastic in the sense that they
activate a randomly selected block of operators at each iteration
and, in addition, allow for random errors in the implementation of
these resolvent steps. Furthermore, the proposed algorithms satisfy
the following requirements: 
\begin{itemize} 
\item[\bfseries R1:]
They guarantee the almost sure convergence of the sequence of 
iterates to a solution to Problem~\ref{prob:1} (respectively
Problem~\ref{prob:10}) without any additional assumptions
on the nonlinear operators (respectively the functions), the linear
operators, or the underlying Hilbert spaces. 
\item[\bfseries R2:] 
At each iteration, more than one
randomly selected resolvent of the operators
$(\mathsf{A},\mathsf{B}_1,\ldots,\mathsf{B}_{\mathsf{p}})$ can be
activated. 
\item[\bfseries R3:] 
Knowledge of bounds on the norms of the linear operators is not
required. 
\item[\bfseries R4:]
The operators are available only through a stochastic 
approximation. 
\end{itemize} 
Requirement {\bfseries R1} imposes actual iterate convergence to a
solution and not a weaker form of convergence such as ergodic
convergence, vanishing stepsizes, or, in the context of
Problem~\ref{prob:10}, convergence of the values of the objective
function. It also asks that Problems~\ref{prob:1} and
\ref{prob:10} be addressed in their generality, without
restricting their scope by introducing additional assumptions.
Requirement {\bfseries R2} makes it possible to activate more than
one operator, hence opening the way to matching efficiently the
computational load of an iteration to the possibly parallel
architecture at hand. Requirement {\bfseries R3} broadens the 
scope of the methods by not assuming any knowledge of the norms of 
the linear operators present in the model. For instance, in domain
decomposition methods, it is quite difficult to obtain tight
upper bounds on the norms of the trace operators \cite{Nume16}. 
Finally, in the spirit of the classical stochastic iteration models
of \cite{Blum54,Dufl97,Robi51}, {\bfseries R4} addresses the
robustness of the algorithm to stochastic errors affecting the
implementation of the operators.

As will be seen in the literature review of Section~\ref{sec:22},
there does not seem to exist methods that satisfy simultaneously
{\bfseries R1--R4}. Our main contribution is presented in
Section~\ref{sec:3}, where we propose three algorithmic frameworks
that comply with {\bfseries R1--R4}. Section~\ref{sec:4} is 
devoted to the minimization setting of Problem~\ref{prob:10}. The 
last section of the paper is Section~\ref{sec:5}, where the 
proposed algorithms are applied to signal restoration, support 
vector machine, classification, and image reconstruction problems.

\section{Notation and existing algorithms}
\label{sec:2}

\subsection{Notation}
\label{sec:21}

Throughout, $\HS$ is a separable real Hilbert space with power set
$2^{\HS}$, identity operator $\Id$, scalar product 
$\scal{\cdot}{\cdot}$, and associated norm $\|\cdot\|$. 

Let $\mathsf{A}\colon\HS\to 2^{\HS}$. The graph of $\mathsf{A}$ is 
$\gra \mathsf{A}=\menge{(\mathsf{x},\mathsf{x}^*)\in\HS\times\HS}
{\mathsf{x}^*\in\mathsf{Ax}}$ and the set of zeros of $\mathsf{A}$
is $\zer\mathsf{A}=\menge{\mathsf{x}\in\HS}
{0\in\mathsf{A}\mathsf{x}}$. The inverse of
$\mathsf{A}$ is the operator 
$\mathsf{A}^{-1}\colon\HS\to 2^{\HS}$ with graph
$\gra\mathsf{A}^{-1}=\menge{(\mathsf{x}^*,\mathsf{x})
\in\HS\times\HS}{\mathsf{x}^*\in\mathsf{Ax}}$ and
the resolvent of $\mathsf{A}$ is
$\mathsf{J}_{\mathsf{A}}=(\Id+\mathsf{A})^{-1}$.
Further, $\mathsf{A}$ is maximally monotone if
\begin{equation}
\label{e:m2}
\big(\forall(\mathsf{x},\mathsf{x}^*)
\in\HS\times\HS\big)\quad
\bigl[\:(\mathsf{x},\mathsf{x}^*)\in\gra\mathsf{A}\;
\Leftrightarrow\;\bigl(\forall(\mathsf{y},\mathsf{y}^*)\in
\gra\mathsf{A}\bigr)\;\;\scal{\mathsf{x}-\mathsf{y}}
{\mathsf{x}^*-\mathsf{y}^*}\geq 0\:\bigr].
\end{equation}
An operator $\mathsf{F}\colon\HS\to\HS$ is firmly nonexpansive if
\begin{equation}
(\forall\mathsf{x}\in\HS)(\forall\mathsf{y}\in\HS)
\quad\scal{\mathsf{x}-\mathsf{y}}
{\mathsf{F}\mathsf{x}-\mathsf{F}\mathsf{y}}\geq
\|{\mathsf{F}\mathsf{x}-\mathsf{F}\mathsf{y}}\|^2.
\end{equation}

\begin{lemma}
\label{l:600}
Let $\mathsf{F}\colon\HS\to\HS$ be firmly nonexpansive and let
$\upgamma\in\RPP$. Then there exists a maximally monotone operator 
$\mathsf{A}\colon\HS\to 2^{\HS}$ such that the following hold:
\begin{enumerate}
\item
\label{l:600i}
$\mathsf{F}=\mathsf{J}_{\mathsf{A}}$.
\item
\label{l:600ii}
$\mathsf{J}_{\upgamma\mathsf{F}}=\Id-\upgamma
\mathsf{J}_{(1+\upgamma)^{-1}\mathsf{A}}\circ
(1+\upgamma)^{-1}\Id$.
\end{enumerate}
\end{lemma}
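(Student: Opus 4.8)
The plan is to handle the two assertions separately: (i) reduces to the classical correspondence between firmly nonexpansive operators and resolvents, while (ii) is a direct computation with the resolvent characterization and the single-valuedness of $\mathsf{J}_{(1+\upgamma)^{-1}\mathsf{A}}$.

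For (i), I would set $\mathsf{A}=\mathsf{F}^{-1}-\Id$. Since $\mathsf{F}$ is firmly nonexpansive with full domain, it is maximally monotone, so $\mathsf{F}^{-1}$ is maximally monotone; a short verification (using $\scal{\mathsf{F}\mathsf{u}-\mathsf{F}\mathsf{v}}{(\mathsf{u}-\mathsf{F}\mathsf{u})-(\mathsf{v}-\mathsf{F}\mathsf{v})}=\scal{\mathsf{F}\mathsf{u}-\mathsf{F}\mathsf{v}}{\mathsf{u}-\mathsf{v}}-\|\mathsf{F}\mathsf{u}-\mathsf{F}\mathsf{v}\|^2\geq0$ together with $\dom\mathsf{F}=\HS$) shows that $\mathsf{A}$ is maximally monotone as well. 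Because $\Id+\mathsf{A}=\mathsf{F}^{-1}$, one gets $\mathsf{J}_{\mathsf{A}}=(\Id+\mathsf{A})^{-1}=\mathsf{F}$, which is (i). This is exactly the standard characterization, so the argument can be kept brief.

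For (ii), fix $\mathsf{x}\in\HS$ and set $\mathsf{p}=\mathsf{J}_{\upgamma\mathsf{F}}\mathsf{x}$, so that $\mathsf{x}-\mathsf{p}=\upgamma\mathsf{F}\mathsf{p}$. Writing $\mathsf{q}=\mathsf{F}\mathsf{p}=\mathsf{J}_{\mathsf{A}}\mathsf{p}$, the defining inclusion of $\mathsf{J}_{\mathsf{A}}$ gives $\mathsf{p}-\mathsf{q}\in\mathsf{A}\mathsf{q}$, while simultaneously $\mathsf{p}=\mathsf{x}-\upgamma\mathsf{q}$. Substituting the latter into the former yields $\mathsf{x}-(1+\upgamma)\mathsf{q}\in\mathsf{A}\mathsf{q}$. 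Multiplying this inclusion by $(1+\upgamma)^{-1}$ gives $(1+\upgamma)^{-1}\mathsf{x}\in(\Id+(1+\upgamma)^{-1}\mathsf{A})\mathsf{q}$, i.e. $\mathsf{q}=\mathsf{J}_{(1+\upgamma)^{-1}\mathsf{A}}\brk1{(1+\upgamma)^{-1}\mathsf{x}}$. Back-substituting into $\mathsf{p}=\mathsf{x}-\upgamma\mathsf{q}$ produces precisely the claimed formula, and letting $\mathsf{x}$ range over $\HS$ gives the operator identity.

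The computations are routine; the only point requiring genuine care is the bookkeeping of the scaling factors, namely verifying that the inclusion $\mathsf{x}-(1+\upgamma)\mathsf{q}\in\mathsf{A}\mathsf{q}$ corresponds to $\mathsf{J}_{(1+\upgamma)^{-1}\mathsf{A}}$ evaluated at the scaled point $(1+\upgamma)^{-1}\mathsf{x}$ rather than at $\mathsf{x}$. I expect this matching of the inclusion with the correct resolvent, closed by the uniqueness (single-valuedness) of $\mathsf{J}_{(1+\upgamma)^{-1}\mathsf{A}}$ that comes from maximal monotonicity of $\mathsf{A}$, to be the crux of the argument.
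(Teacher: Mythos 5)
Your proof is correct, but it is self-contained where the paper's proof is pure citation: the paper disposes of \ref{l:600i} by invoking \cite[Corollary~23.9]{Livre1}, and of \ref{l:600ii} by combining \ref{l:600i} with the resolvent identity of \cite[Proposition~23.29]{Livre1}. Your part (i) is in effect a proof of that corollary: the decomposition $\mathsf{A}=\mathsf{F}^{-1}-\Id$, the inner-product computation giving monotonicity, and maximality via Minty's theorem (since $\ran(\Id+\mathsf{A})=\ran\mathsf{F}^{-1}=\dom\mathsf{F}=\HS$) is exactly the standard argument; you should state Minty explicitly rather than gesture at it parenthetically, but the ingredients you name are the right ones. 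Your part (ii) replaces the citation by a direct pointwise verification, which is clean and complete up to one tacit step: writing $\mathsf{p}=\mathsf{J}_{\upgamma\mathsf{F}}\mathsf{x}$ for an arbitrary $\mathsf{x}\in\HS$ already presumes that $\mathsf{J}_{\upgamma\mathsf{F}}$ is single-valued with domain all of $\HS$, which holds because $\upgamma\mathsf{F}$ is continuous, monotone, and everywhere defined, hence maximally monotone --- this deserves a sentence. What the paper's route buys is brevity; what yours buys is independence from the textbook and an explicit view of where the scaling $(1+\upgamma)^{-1}$ in the formula comes from, namely the passage from the inclusion $\mathsf{x}-(1+\upgamma)\mathsf{q}\in\mathsf{A}\mathsf{q}$ to the resolvent of $(1+\upgamma)^{-1}\mathsf{A}$ evaluated at $(1+\upgamma)^{-1}\mathsf{x}$.
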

\begin{proof}
\ref{l:600i}: See \cite[Corollary~23.9]{Livre1}.

\ref{l:600ii}: This follows from \ref{l:600i} and
\cite[Proposition~23.29]{Livre1}.
\end{proof}

$\upGamma_0(\HS)$ denotes the class of lower semicontinuous convex
functions $\mathsf{f}\colon\HS\to\RX$ such that 
$\dom\mathsf{f}=\menge{\mathsf{x}\in\HS}
{\mathsf{f}(\mathsf{x})<\pinf}\neq\emp$. Let
$\mathsf{f}\in\upGamma_0(\HS)$. The subdifferential of $\mathsf{f}$
is the maximally monotone operator
\begin{equation}
\label{e:subdiff}
\partial\mathsf{f}\colon\HS\to 2^{\HS}\colon\mathsf{x}\mapsto
\menge{\mathsf{x}^*\in\HS}{(\forall\mathsf{z}\in\HS)\;
\scal{\mathsf{z}-\mathsf{x}}{\mathsf{x}^*}+\mathsf{f}(\mathsf{x})
\leq\mathsf{f}(\mathsf{z})} 
\end{equation}
and the proximity operator of $\mathsf{f}$ is 
\begin{equation}
\label{e:dprox}
\prox_{\mathsf{f}}=\mathsf{J}_{\partial\mathsf{f}}\colon\HS\to\HS
\colon\mathsf{x}\mapsto\underset{\mathsf{z}\in\HS}
{\text{argmin}}\;\brk2{\mathsf{f}(\mathsf{z})+
\dfrac{1}{2}\|\mathsf{x}-\mathsf{z}\|^2}.
\end{equation}
Let $\mathsf{C}$ be a nonempty closed convex subset of $\HS$. Then 
$\iota_{\mathsf{C}}$ denotes the indicator function of 
$\mathsf{C}$, $\mathsf{d}_{\mathsf{C}}$ the distance function to
$\mathsf{C}$, 
\begin{equation}
\label{e:nc}
\mathsf{N}_{\mathsf{C}}=\partial\iota_{\mathsf{C}}\colon
\mathsf{x}\mapsto
\begin{cases}
\menge{\mathsf{x}^*\in\HS}{(\forall\mathsf{y}\in\mathsf{C})\;\;
\scal{\mathsf{y}-\mathsf{x}}{\mathsf{x}^*}\leq 0},
&\text{if}\;\;\mathsf{x}\in\mathsf{C};\\
\emp,&\text{otherwise}
\end{cases}
\end{equation}
the normal cone operator of $\mathsf{C}$, and
$\proj_{\mathsf{C}}=\prox_{\iota_{\mathsf{C}}}
=\mathsf{J}_{\mathsf{N}_{\mathsf{C}}}$
the projection operator onto $\mathsf{C}$. In particular, if
$\mathsf{V}$ is a closed vector subspace of $\HS$, 
\begin{equation}
\label{e:nv}
\mathsf{N}_{\mathsf{V}}\colon\HS\to2^{\HS}\colon\mathsf{x}\mapsto
\begin{cases}
\mathsf{V}^\bot,&\text{if}\;\;\mathsf{x}\in\mathsf{V};\\
\emp,&\text{otherwise.}
\end{cases}
\end{equation}

The underlying probability space $(\upOmega,\FE,\PP)$ is assumed to
be complete and $\BE_{\HS}$ denotes the Borel $\upsigma$-algebra of
$\HS$. An $\HS$-valued random variable is a measurable 
mapping $x\colon(\upOmega,\FE)\to(\HS,\BE_{\HS})$. 
The $\upsigma$-algebra generated by a family $\upPhi$ of 
random variables is denoted by $\upsigma(\upPhi)$.
Given $x\colon\upOmega\to\HS$ and $\mathsf{S}\subset\HS$, we set 
$[x\in\mathsf{S}]=\menge{\upomega\in\upOmega}{x(\upomega)\in
\mathsf{S}}$.
The reader is referred to \cite{Livre1} for background on monotone
operators and convex analysis, and to \cite{Ledo91} for background
on probability in Hilbert spaces. 

We use sans-serif letters to denote deterministic variables and
italicized serif letters to denote random variables. Finally, in
connection with Problem~\ref{prob:1}, we define the Hilbert direct
sum
\begin{equation}
\label{e:G}
\GGS=\GS_1\oplus\cdots\oplus\GS_{\mathsf{p}},
\end{equation}
as well as the subspace
\begin{equation}
\label{e:2024}
\boldsymbol{\mathsf{W}}=
\Menge3{\boldsymbol{\mathsf{x}}\in\HS\oplus\GGS}
{(\forall\kS\in\{1,\ldots,\mathsf{p}\})\;\mathsf{x}_{\kS+1}=
\LS_{\kS}\mathsf{x}_{1}},
\end{equation}
and note that 
\begin{equation}
\label{e:2024d}
\boldsymbol{\mathsf{W}}^\bot=
\Menge3{\boldsymbol{\mathsf{x}}^*\in\HS\oplus\GGS}
{\mathsf{x}_1^*=-\sum_{\kS=1}^{\mathsf{p}}
\LS_{\kS}^*\mathsf{x}_{\kS+1}^*}.
\end{equation}

\subsection{Existing algorithms}
\label{sec:22}

It seems that no algorithm satisfying requirements 
{\bfseries R1--R4} has been explicitly proposed to solve
Problem~\ref{prob:1} --- or even Problem~\ref{prob:10} --- in the
literature. There is a vast body of papers on random activation
algorithms in the special case of Problem~\ref{prob:10} that
consists in minimizing a sum of smooth functions
$\sum_{\kS=1}^\mathsf{p}\mathsf{g}_{\kS}$ in $\HS=\RR^\mathsf{N}$
via so-called stochastic gradient descent methods. Their principle
is to activate a randomly selected gradient in 
$(\nabla\mathsf{g}_{\kS})_{1\leq\kS\leq\mathsf{p}}$ at each
iteration; see \cite{Dieu23} and its bibliography and 
\cite{Con23a,Trao24}
for related work with random proximal activations for this type
problem. These methods focus on a very specific instance of
Problem~\ref{prob:10} and they do not satisfy 
{\bfseries R1}--{\bfseries R2}. The only random activation 
algorithm tailored to Problem~\ref{prob:10} which 
guarantees almost sure convergence of the iterates without
additional assumptions such as strong convexity is the following
(see also \cite{Alac22} for a non-adaptive version).

\begin{proposition}[{\cite[Theorem~2.1 and Algorithm~3.1]{Cham24}}]
\label{p:72}
Consider the setting of Problem~\ref{prob:10} and suppose that
$\HS=\RR^{\mathsf{N}}$ and, for every 
$\kS\in\{1,\dots,\mathsf{p}\}$,
$\GS_{\kS}=\RR^{\mathsf{M}_{\kS}}$, all considered as standard
Euclidean spaces. Let 
$(\uppi_{\kS})_{1\leq\kS\leq\mathsf{p}}$ be real numbers in 
$\rzeroun$ such 
that $\sum_{\kS=1}^{\mathsf{p}}\uppi_{\kS}=1$, and let 
$(k_{\nS})_{\nnn}$ be identically distributed 
$\{1,\dots,\mathsf{p}\}$-valued random variables such that, for 
every $\kS\in\{1,\dots,\mathsf{p}\}$, $\PP[k_0=\kS]=\uppi_{\kS}$.
Set, for every $\kS\in\{1,\dots,\mathsf{p}\}$ and every $\nnn$,
$\varepsilon_{\kS,\nS}=\mathsf{1}_{[k_{\nS}=\kS]}$.
Let $\tau_0\in\RPP$ and $\sigma_0\in\RPP$ be such that
\begin{equation}
\label{e:CHcond}
\tau_0\sigma_0\max_{1\leq\kS\leq\mathsf{p}}
\frac{\|\LS_{\kS}\|^2}{\uppi_{\kS}}<1.
\end{equation}
Further, let $\chi_0\in\left[0,1\right[$, $\upeta\in\zeroun$, and
$\updelta\in\left]1,\pinf\right[$, set $\rho_0=0$ and $\nu_0=0$,
let $x_{1,0}$ be a $\HS$-valued random variable, and let
$\boldsymbol{y}_0$ be a $\GGS$-valued random variable. 
Set ${\boldsymbol{z}}_0=\boldsymbol{y}_0$ and
$\boldsymbol{\LS}\colon\HS\to\GGS\colon\mathsf{x}
\mapsto(\LS_{\kS}\mathsf{x})_{1\leq\kS\leq\mathsf{p}}$, and 
iterate 
\begin{equation}
\label{e:72}
\begin{array}{l}
\text{for}\;\nS=0,1,\ldots\\
\left\lfloor
\begin{array}{l}
\brk1{\tau_{\nS+1},\sigma_{\nS+1},\chi_{\nS+1}}=
\begin{cases}
\brk2{\dfrac{\tau_{\nS}}{1-\chi_{\nS}},\sigma_{\nS}(1-\chi_{\nS}),
\chi_{\nS}\upeta},
&\text{if}\;\;\rho_{\nS}>\|\boldsymbol{\LS}\|\nu_{\nS}\updelta;\\
\brk2{\tau_{\nS}(1-\chi_{\nS}),\dfrac{\sigma_{\nS}}{1-\chi_{\nS}},
\chi_{\nS}\upeta},
&\text{if}\;\;\rho_{\nS}<\|\boldsymbol{\LS}\|\nu_{\nS}\updelta;\\
\brk1{\tau_{\nS},\sigma_{\nS},\chi_{\nS}},
&\text{if}\;\;\dfrac{\|\boldsymbol{\LS}\|\nu_{\nS}}{\updelta}\leq
\rho_{\nS}\leq\|\boldsymbol{\LS}\|\nu_{\nS}\updelta
\end{cases}\\[3mm]
{x}_{1,\nS+1}
=x_{1,\nS}+\prox_{\tau_{\nS+1}\mathsf{f}}\brk1{x_{1,\nS}-
\tau_{\nS+1}\sum_{\kS=1}^{\mathsf{p}}\LS_{\kS}^*{z}_{\kS,\nS}}\\
\text{for}\;\kS=1,\dots,\mathsf{p}\\
\left\lfloor
\begin{array}{l}
{y}_{\kS,\nS+1}
=y_{\kS,\nS+1}+\varepsilon_{\kS,\nS}\brk2{
\prox_{\sigma_{\nS+1}\mathsf{g}_{\kS}^*}\brk1{y_{\kS,\nS}+
\sigma_{\nS+1}\LS_{\kS}x_{1,\nS+1}}-y_{\kS,\nS}}\\[5mm]
{z}_{\kS,\nS+1}=
y_{\kS,\nS}+\varepsilon_{\kS,\nS}
\Bigl(y_{\kS,\nS+1}+\dfrac{1}{\uppi_{\kS}}\bigl(y_{\kS,\nS+1}
-y_{\kS,\nS}\bigr)-y_{\kS,\nS}\Bigr)\\[5mm]
\rho_{\nS+1}=\Bigl\|\dfrac{1}{\tau_{\nS+1}}
\bigl(x_{\nS}-x_{\nS+1}\bigr)-
\dfrac{1}{\uppi_{k_{\nS}}}\LS^*_{k_{\nS}}\bigl(y_{k_{\nS},\nS}-
y_{k_{\nS},\nS+1}\bigr)\Bigr\|_{1}\\[5mm]
\nu_{\nS+1}=\dfrac{1}{\uppi_{k_{\nS}}}\Bigl\|\LS_{k_{\nS}}
\bigl(x_{\nS}-x_{\nS+1}\bigr)-\dfrac{1}{\sigma_{\nS+1}}
\bigl(y_{k_{\nS},\nS}-y_{k_{\nS},\nS+1}\bigr)\Bigr\|_{1},\\
\end{array}
\right.\\
\end{array}
\right.\\
\end{array}
\end{equation}
where $\|\cdot\|_1$ denotes the $\ell^1$-norm.
Then $(x_{1,\nS})_{\nnn}$ converges $\Pas$ to an 
$\Argmin(\mathsf{f}+\sum_{\kS=1}^{\mathsf{p}}\mathsf{g}_{\kS}
\circ\LS_{\kS})$-valued random variable.
\end{proposition}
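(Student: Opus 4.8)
Although this statement is quoted from \cite{Cham24}, I would establish it independently through the following primal--dual analysis. The plan is first to recast the minimization \eqref{e:p10} as a monotone inclusion on the product space $\HS\oplus\GGS$. Introducing a dual variable $\boldsymbol{y}^*=(y_{\kS}^*)_{1\le\kS\le\mathsf{p}}$, a point $\mathsf{x}$ solves \eqref{e:p10} if and only if there exists $\boldsymbol{y}^*$ such that
\begin{equation}
0\in\partial\mathsf{f}(\mathsf{x})+\sum_{\kS=1}^{\mathsf{p}}\LS_{\kS}^*y_{\kS}^*
\quad\text{and}\quad
(\forall\kS\in\{1,\dots,\mathsf{p}\})\;\;\LS_{\kS}\mathsf{x}\in\partial\mathsf{g}_{\kS}^*(y_{\kS}^*),
\end{equation}
that is, $\boldsymbol{0}\in(\boldsymbol{M}+\boldsymbol{S})(\mathsf{x},\boldsymbol{y}^*)$, where $\boldsymbol{M}=\partial\mathsf{f}\times\partial\mathsf{g}_{1}^*\times\cdots\times\partial\mathsf{g}_{\mathsf{p}}^*$ is maximally monotone and separable across the $\mathsf{p}+1$ blocks, and $\boldsymbol{S}$ is the bounded skew-symmetric operator built from $\boldsymbol{\LS}$ and $\boldsymbol{\LS}^*$. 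The next step is to identify the deterministic backbone of \eqref{e:72}, namely the primal--dual recursion with primal step $\tau_{\nS}$ and dual step $\sigma_{\nS}$, as a firmly nonexpansive proximal iteration for $\boldsymbol{M}+\boldsymbol{S}$ in the variable metric induced by the preconditioner associated with $(\tau_{\nS},\sigma_{\nS})$, which is positive definite exactly when a product condition of the type \eqref{e:CHcond} holds.

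I would then account for the randomization. The selection variables $(k_{\nS})_{\nnn}$ and the indicators $\varepsilon_{\kS,\nS}$ restrict the dual update at iteration $\nS$ to the single block $k_{\nS}$, and the weight $1/\uppi_{\kS}$ appearing in the $z_{\kS,\nS+1}$ recursion is the unbiasing device chosen so that, conditionally on the $\upsigma$-algebra $\mathcal{F}_{\nS}$ generated by the iterates and selections up to step $\nS$, the expected increment reproduces the deterministic over-relaxed primal--dual increment. Taking the conditional expectation of the squared distance $V_{\nS}$ (in the current metric) to an arbitrary primal--dual solution $\boldsymbol{z}^*=(\mathsf{x}^*,\boldsymbol{y}^*)$ then yields a stochastic quasi-Fej\'er inequality
\begin{equation}
\mathsf{E}\bigl[V_{\nS+1}\mid\mathcal{F}_{\nS}\bigr]\le V_{\nS}-\theta_{\nS}+\xi_{\nS},
\end{equation}
in which $\theta_{\nS}\ge0$ measures the defect of the algorithmic map and $\xi_{\nS}\ge0$ gathers the sampling-variance term and the discrepancy between consecutive metrics.

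The delicate part, and the one I expect to be the main obstacle, is to control the variable metric produced by the adaptive rule for $(\tau_{\nS},\sigma_{\nS},\chi_{\nS})$ jointly with the stochastic estimate. Here $\rho_{\nS}$ and $\nu_{\nS}$ act as a primal and a rescaled dual residual, and the three-way test against $\|\boldsymbol{\LS}\|\nu_{\nS}\updelta$ and $\|\boldsymbol{\LS}\|\nu_{\nS}/\updelta$ only rebalances the ratio $\tau_{\nS}/\sigma_{\nS}$ while leaving the product $\tau_{\nS}\sigma_{\nS}\equiv\tau_0\sigma_0$ invariant; hence the feasibility \eqref{e:CHcond} persists at every iteration and the preconditioners stay uniformly positive definite. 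Since each rebalancing multiplies $\chi_{\nS}$ by $\upeta\in\zeroun$ while leaving it unchanged otherwise, the values of $\chi_{\nS}$ used at successive rebalancings decay geometrically, so the cumulative relative perturbation $\sum_{\nS}\chi_{\nS}$ is finite, $(\tau_{\nS},\sigma_{\nS})$ converges to strictly positive limits, the metrics converge, and the metric-discrepancy contribution to $\sum_{\nS}\xi_{\nS}$ is summable. Establishing this stabilization while simultaneously keeping the sampling-variance term summable is the technical crux.

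Finally, I would apply a stochastic quasi-Fej\'er (Robbins--Siegmund-type) principle adapted to the eventually fixed metric. It yields, almost surely, that $V_{\nS}$ converges, so the primal--dual sequence is bounded, and that $\sum_{\nS}\theta_{\nS}<\pinf$, so the algorithmic residual vanishes along the trajectory. Combining the vanishing residual with the demiclosedness of the resolvents of $\boldsymbol{M}+\boldsymbol{S}$ shows that every cluster point of $(x_{1,\nS},\boldsymbol{y}_{\nS})$ is almost surely a primal--dual solution, and an Opial-type argument exploiting the separability of the spaces upgrades this to almost sure convergence of the whole sequence to a single solution-valued random variable. As $\HS=\RR^{\mathsf{N}}$ and the spaces $\GS_{\kS}=\RR^{\mathsf{M}_{\kS}}$ are finite-dimensional, weak and strong convergence coincide, and the primal component $x_{1,\nS}$ therefore converges $\Pas$ to an $\Argmin(\mathsf{f}+\sum_{\kS=1}^{\mathsf{p}}\mathsf{g}_{\kS}\circ\LS_{\kS})$-valued random variable.
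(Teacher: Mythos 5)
The first thing to note is that the paper does not prove this proposition at all: it is imported verbatim, with attribution, from \cite[Theorem~2.1 and Algorithm~3.1]{Cham24} as part of the literature review of Section~\ref{sec:22}, so there is no internal proof to compare yours against. Judged on its own terms, your outline is a faithful reconstruction of the strategy used in the cited work: recast \eqref{e:p10} as a primal--dual inclusion for $\partial\mathsf{f}\times\partial\mathsf{g}_1^*\times\cdots\times\partial\mathsf{g}_{\mathsf{p}}^*$ plus a skew coupling, view the deterministic backbone of \eqref{e:72} as a proximal-point iteration in the preconditioned metric whose positive definiteness is exactly \eqref{e:CHcond}, treat the $1/\uppi_{\kS}$ weights as the unbiasing device, observe that all three branches of the step-size rule preserve the product $\tau_{\nS}\sigma_{\nS}=\tau_0\sigma_0$ while the adjustments applied at rebalancing times decay geometrically (each rebalancing multiplies $\chi_{\nS}$ by $\upeta$), and close with a Robbins--Siegmund/stochastic quasi-Fej\'er argument.

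However, as a proof your text remains a plan, and you concede as much: the conditional-expectation computation that would produce the inequality $\mathsf{E}[V_{\nS+1}\mid\mathcal{F}_{\nS}]\le V_{\nS}-\theta_{\nS}+\xi_{\nS}$ is asserted rather than derived, the summability of $\xi_{\nS}$ (which must combine the sampling-variance term with the metric-discrepancy term) is claimed but not established, and you never specify what $\theta_{\nS}$ controls, which is what the identification of cluster points hinges on; this is precisely the step you label ``the technical crux'' and leave open, so the proposal cannot stand as a proof. Two smaller inaccuracies: your opening ``if and only if'' (every minimizer admits a dual certificate) fails without a qualification condition --- what saves the argument is the standing assumption $\zer(\partial\mathsf{f}+\sum_{\kS=1}^{\mathsf{p}}\LS_{\kS}^*\circ(\partial\mathsf{g}_{\kS})\circ\LS_{\kS})\neq\emp$ of Problem~\ref{prob:10}, which directly supplies a nonempty primal--dual solution set, and that is all a Fej\'er-type argument needs. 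Also, $\sum_{\nS}\chi_{\nS}$ is generally \emph{not} finite, since long stretches without rebalancing keep $\chi_{\nS}$ at a constant positive value; what is finite is the sum of $\chi_{\nS}$ over the rebalancing iterations only, and that is the quantity the stabilization of $(\tau_{\nS},\sigma_{\nS})$ actually requires.
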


Algorithm \eqref{e:72} is of interest because it guarantees
{\bfseries R1} in a finite-dimensional setting. However, it does
not satisfy {\bfseries R2} since, at each
iteration, $\mathsf{f}$ must be activated together with one of the
functions $(\mathsf{g}_{\kS})_{1\leq\kS\leq\mathsf{p}}$. It does
not satisfy {\bfseries R3} either since it requires the knowledge
of the norms of linear operators in \eqref{e:CHcond}. 
We also note that it does not tolerate errors in the evaluation of
the proximity operators, which means that {\bfseries R4} is not
satisfied.

Let us now turn to the general Problem~\ref{prob:1}. The only
algorithm that satisfies {\bfseries R1} is that of \cite{Pesq15},
which corresponds to an implementation of the random
block-coordinate forward-backward algorithm of
\cite[Section~5.2]{Siop15} suggested in
\cite[Remark~5.10(iv)]{Siop15}. 

\begin{proposition}[{\cite[Proposition~4.6]{Pesq15}}]
\label{p:71}
Consider the setting of Problem~\ref{prob:1}. Let
$\mathsf{W}\colon\HS\to\HS$ and, for every
$\kS\in\{1,\dots,\mathsf{p}\}$,
$\mathsf{U}_{\kS}\colon\GS_{\kS}\to\GS_{\kS}$ be 
bounded linear strongly positive self-adjoint operators such that
\begin{equation}
\label{e:PRcond}
\Sum_{\kS=1}^{\mathsf{p}}
\bigl\|\mathsf{U}_{\kS}^{1/2}\LS_{\kS}\mathsf{W}^{1/2}\bigr\|^2 
<\dfrac{1}{2}. 
\end{equation}
Let $(\uplambda_{\nS})_{\nnn}$ be a sequence in $\left]0,1\right]$ 
such that $\inf_{\nnn}\uplambda_{\nS}>0$, let $x_{1,0}$ and 
$(a_{1,\nS})_{\nnn}$ be $\HS$-valued 
random variables, let $\boldsymbol{v}_0$ and 
$(\boldsymbol{b}_{\nS})_{\nnn}$ be $\GGS$-valued
random variables, and let
$(\boldsymbol{\varepsilon}_{\nS})_{\nnn}$ be identically
distributed $\{0,1\}^{\mathsf{p}}\smallsetminus
\{\boldsymbol{\mathsf{0}}\}$-valued random variables.
Iterate 
\begin{equation}
\label{e:71}
\begin{array}{l}
\text{for}\;\nS=0,1,\ldots\\
\left\lfloor
\begin{array}{l}
y_{1,\nS}=\mathsf{J}_{\mathsf{W}\mathsf{A}}
\Bigl(x_{1,\nS}-\mathsf{W}\bigl(\sum_{\kS=1}^{\mathsf{p}}
\LS^*_{\kS}v_{\kS,\nS}\bigr)\Bigr)+a_{1,\nS}\\
x_{1,\nS+1}=x_{1,\nS}+\uplambda_{\nS}(y_{1,\nS}-x_{1,\nS})\\
\text{for}\;\kS=1,\dots,\mathsf{p}\\
\left\lfloor
\begin{array}{l}
u_{\kS,\nS}=\varepsilon_{\kS,\nS}
\Bigl(\mathsf{J}_{\mathsf{U}_{\kS}\mathsf{B}_{\kS}^{-1}}
\Bigl(v_{\kS,\nS}+\mathsf{U}_{\kS}\bigl(
\LS_{\kS}(2y_{1,\nS}-x_{1,\nS})\bigr)\Bigr)+b_{\kS,\nS}\Bigr)\\
v_{\kS,\nS+1}={v}_{\kS,\nS}+\varepsilon_{\kS,\nS}\uplambda_{\nS}
\bigl(u_{\kS,\nS}-v_{\kS,\nS}\bigr),
\end{array}
\right.\\
\end{array}
\right.\\
\end{array}
\end{equation}
and set $(\forall\nnn)$
$\boldsymbol{\EuScript{E}}_{\nS}=
\upsigma(\boldsymbol{\varepsilon}_{\nS})$ and 
$\boldsymbol{\EuScript{X}}_{\nS}=
\upsigma(x_{1,\lS},\boldsymbol{v}_{\lS})_{0\leq\lS\leq\nS}$. In 
addition, assume that the following hold:
\begin{enumerate}
\item
$\sum_{\nnn}\sqrt{\EC{\|{a}_{1,\nS}\|^2}
{\boldsymbol{\EuScript{X}}_{\nS}}}<\pinf$ and
$\sum_{\nnn}\sqrt{\EC{\|\boldsymbol{b}_{\nS}\|^2}
{\boldsymbol{\EuScript{X}}_{\nS}}}<\pinf$.
\item
For every $\nnn$, $\boldsymbol{\EuScript{E}}_{\nS}$
and $\boldsymbol{\EuScript{X}}_{\nS}$ are independent.
\item
For every $\lS\in\{1,\ldots,\mathsf{p}\}$,
$\PP[\varepsilon_{\lS,0}=1]>0$.
\end{enumerate}
Then $(x_{1,\nS})_{\nnn}$ converges weakly $\Pas$ to a $\ZS$-valued
random variable.
\end{proposition}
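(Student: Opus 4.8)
The plan is to cast \eqref{e:71} as a randomly activated, relaxed Krasnosel'ski\u\i--Mann iteration of a single averaged operator on the primal-dual product space, and then to appeal to the stochastic quasi-Fej\'er block-coordinate theory of \cite[Section~5.2]{Siop15}. First I would work in $\KKS=\HS\oplus\GGS$, writing a point as $\boldsymbol{\mathsf{z}}=(\mathsf{x},\boldsymbol{\mathsf{v}})$ with $\boldsymbol{\mathsf{v}}=(v_{\kS})_{1\leq\kS\leq\mathsf{p}}$, and introduce the maximally monotone operator $\boldsymbol{\mathsf{M}}\colon(\mathsf{x},\boldsymbol{\mathsf{v}})\mapsto\mathsf{A}\mathsf{x}\times\mathsf{B}_1^{-1}v_1\times\cdots\times\mathsf{B}_{\mathsf{p}}^{-1}v_{\mathsf{p}}$ together with the bounded skew-adjoint operator $\boldsymbol{\mathsf{S}}\colon(\mathsf{x},\boldsymbol{\mathsf{v}})\mapsto(\sum_{\kS=1}^{\mathsf{p}}\LS_{\kS}^*v_{\kS},-\LS_1\mathsf{x},\dots,-\LS_{\mathsf{p}}\mathsf{x})$, so that $\boldsymbol{\mathsf{M}}+\boldsymbol{\mathsf{S}}$ is maximally monotone. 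A direct computation gives $\boldsymbol{\mathsf{z}}\in\zer(\boldsymbol{\mathsf{M}}+\boldsymbol{\mathsf{S}})$ precisely when $-\sum_{\kS}\LS_{\kS}^*v_{\kS}\in\mathsf{A}\mathsf{x}$ and $v_{\kS}\in\mathsf{B}_{\kS}(\LS_{\kS}\mathsf{x})$ for all $\kS$, whence $\mathsf{x}\in\ZS$; conversely every $\mathsf{x}\in\ZS$ extends to some such $\boldsymbol{\mathsf{z}}$, so the assumption $\ZS\neq\emp$ yields $\zer(\boldsymbol{\mathsf{M}}+\boldsymbol{\mathsf{S}})\neq\emp$ and the inclusion $\Fix\boldsymbol{\mathsf{T}}\subseteq\ZS\times\GGS$ for the operator $\boldsymbol{\mathsf{T}}$ introduced below.

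Next I would renorm $\KKS$. With $\mathsf{U}=\mathsf{U}_1\oplus\cdots\oplus\mathsf{U}_{\mathsf{p}}$ and $\boldsymbol{\LS}\colon\HS\to\GGS\colon\mathsf{x}\mapsto(\LS_{\kS}\mathsf{x})_{1\leq\kS\leq\mathsf{p}}$, consider the self-adjoint operator $\boldsymbol{\mathsf{V}}\colon(\mathsf{x},\boldsymbol{\mathsf{v}})\mapsto(\mathsf{W}^{-1}\mathsf{x}-\boldsymbol{\LS}^*\boldsymbol{\mathsf{v}},\mathsf{U}^{-1}\boldsymbol{\mathsf{v}}-\boldsymbol{\LS}\mathsf{x})$ and the induced inner product $\scal{\cdot}{\boldsymbol{\mathsf{V}}\cdot}$. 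The analytical core of the argument---and the step I expect to be the main obstacle---is that condition \eqref{e:PRcond} is precisely the quantitative requirement ensuring simultaneously that $\boldsymbol{\mathsf{V}}$ is strongly positive, so that $\scal{\cdot}{\boldsymbol{\mathsf{V}}\cdot}$ induces a norm equivalent to the one on $\KKS$ (indeed \eqref{e:PRcond} forces $\|\mathsf{U}^{1/2}\boldsymbol{\LS}\mathsf{W}^{1/2}\|^2<1/2<1$), and that the error-free, fully activated primal-dual operator obtained from the resolvent steps of \eqref{e:71} by deleting the activation variables $\varepsilon_{\kS,\nS}$ and the errors $a_{1,\nS},b_{\kS,\nS}$---namely $\boldsymbol{\mathsf{T}}\colon(\mathsf{x},\boldsymbol{\mathsf{v}})\mapsto\bigl(y,\,(\mathsf{J}_{\mathsf{U}_{\kS}\mathsf{B}_{\kS}^{-1}}(v_{\kS}+\mathsf{U}_{\kS}\LS_{\kS}(2y-\mathsf{x})))_{1\leq\kS\leq\mathsf{p}}\bigr)$, where $y=\mathsf{J}_{\mathsf{W}\mathsf{A}}(\mathsf{x}-\mathsf{W}\sum_{\kS=1}^{\mathsf{p}}\LS_{\kS}^*v_{\kS})$---is $\alpha$-averaged in the $\boldsymbol{\mathsf{V}}$-metric for some $\alpha\in\zeroun$, with $\Fix\boldsymbol{\mathsf{T}}=\zer(\boldsymbol{\mathsf{M}}+\boldsymbol{\mathsf{S}})$. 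Because $\boldsymbol{\mathsf{S}}$ is merely skew-adjoint and hence not cocoercive, this averagedness does not follow from a plain forward-backward estimate; it rests on the interplay between $\boldsymbol{\mathsf{S}}$ and the off-diagonal coupling in $\boldsymbol{\mathsf{V}}$, and I would import it in the form established in \cite[Section~5.2]{Siop15}.

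With $\boldsymbol{\mathsf{T}}$ so identified, \eqref{e:71} becomes the random block-coordinate relaxed iteration of $\boldsymbol{\mathsf{T}}$ over the $\mathsf{p}+1$ blocks $(\mathsf{x},v_1,\dots,v_{\mathsf{p}})$, in which the primal block is activated at every iteration while the dual block $\kS$ is activated exactly when $\varepsilon_{\kS,\nS}=1$, the additive terms $a_{1,\nS}$ and $b_{\kS,\nS}$ serving as the stochastic errors in the evaluation of $\boldsymbol{\mathsf{T}}$. It then remains to verify the hypotheses of the stochastic quasi-Fej\'er convergence theorem: the relaxations satisfy $\uplambda_{\nS}\in\left]0,1\right]$ with $\inf_{\nS}\uplambda_{\nS}>0$; hypothesis~(i) supplies the summability $\sum_{\nS}\sqrt{\EC{\|a_{1,\nS}\|^2}{\boldsymbol{\EuScript{X}}_{\nS}}}<\pinf$ and its dual counterpart of the conditional error norms; hypothesis~(ii) gives the independence of the sweeping $\upsigma$-algebra $\boldsymbol{\EuScript{E}}_{\nS}$ from the history $\boldsymbol{\EuScript{X}}_{\nS}$; and hypothesis~(iii) ensures that every coordinate is activated with positive probability, so that no block is asymptotically starved. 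The theorem then delivers weak $\Pas$ convergence of $(\boldsymbol{\mathsf{z}}_{\nS})_{\nnn}=((x_{1,\nS},\boldsymbol{v}_{\nS}))_{\nnn}$ to a $\Fix\boldsymbol{\mathsf{T}}$-valued random variable; projecting onto the primal coordinate and invoking $\Fix\boldsymbol{\mathsf{T}}\subseteq\ZS\times\GGS$ from the first step, I would conclude that $(x_{1,\nS})_{\nnn}$ converges weakly $\Pas$ to a $\ZS$-valued random variable, as claimed.
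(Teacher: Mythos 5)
Your proposal is correct and follows essentially the same route as the proof of the result the paper cites here without giving a proof of its own (\cite[Proposition~4.6]{Pesq15}, which the paper describes as an implementation of the random block-coordinate forward-backward framework of \cite[Section~5.2]{Siop15}): recast \eqref{e:71} as a randomly activated, relaxed iteration of the full primal-dual operator on $\HS\oplus\GGS$ equipped with the renormed metric induced by $\boldsymbol{\mathsf{V}}$, and invoke the stochastic quasi-Fej\'er block-coordinate theory, with the primal block activated at every iteration and the dual blocks activated by the $\varepsilon_{\kS,\nS}$. One remark: the step you single out as the main analytical obstacle is in fact immediate, since your operator $\boldsymbol{\mathsf{T}}$ is precisely the resolvent of the maximally monotone operator $\boldsymbol{\mathsf{M}}+\boldsymbol{\mathsf{S}}$ computed in the $\boldsymbol{\mathsf{V}}$-inner product, hence firmly nonexpansive (i.e., $1/2$-averaged) in that metric as soon as $\boldsymbol{\mathsf{V}}$ is strongly positive, which \eqref{e:PRcond} guarantees because it forces $\bigl\|\mathsf{U}^{1/2}\boldsymbol{\LS}\mathsf{W}^{1/2}\bigr\|^2\leq\sum_{\kS=1}^{\mathsf{p}}\bigl\|\mathsf{U}_{\kS}^{1/2}\LS_{\kS}\mathsf{W}^{1/2}\bigr\|^2<1/2<1$ for $\mathsf{U}=\mathsf{U}_1\oplus\cdots\oplus\mathsf{U}_{\mathsf{p}}$.
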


Let us note that Algorithm \eqref{e:71} satisfies
{\bfseries R1} but not {\bfseries R2} since it must activate
$\mathsf{A}$ at each iteration, nor {\bfseries R3} since it
requires the knowledge of the norms of linear operators to
implement \eqref{e:PRcond}. Another framework related to
Problem~\ref{prob:1} is that of \cite{Davi23}, which allows for
random activations in Problem~\ref{prob:1} in a finite-dimensional
setting when no linear operator is present and under the
assumption that the operators
$(\mathsf{B}_{\kS})_{1\leq\kS\leq\mathsf{p}}$ are cocoercive. It
therefore does not satisfy several requirements of {\bfseries R1}
and activates only one operator per iteration, which violates
{\bfseries R2}. On the other hand, the recent work
\cite{Sadi24} solves Problem~\ref{prob:1} in a 
finite-dimensional setting when no linear operator is present and
under strong monotonicity of the nonlinear operators. Hence, it
does not satisfy {\bfseries R1} and, since it does not allow for 
multiple activations at each iteration, it does not satisfy
{\bfseries R2} either.

\section{Proposed algorithms}
\label{sec:3}

\subsection{Multivariate framework}

Our strategy consists in embedding Problem~\ref{prob:1} into
multivariate problems that have the following general form studied
in \cite{Siop15} and involve $\mathsf{m}$ agents 
$(\mathsf{x}_{1},\ldots,\mathsf{x}_{\mathsf{m}})$.

\begin{problem}
\label{prob:2}
Let $(\XS_{\iS})_{1\leq\iS\leq\mathsf{m}}$ and
$(\YS_{\jS})_{1\leq\jS\leq\mathsf{r}}$ be families of separable 
real Hilbert spaces with Hilbert direct sums 
$\XXS=\XS_1\oplus\cdots\oplus\XS_{\mathsf{m}}$ and
$\YYS=\YS_1\oplus\cdots\oplus\YS_{\mathsf{r}}$. For every 
$\iS\in\{1,\ldots,\mathsf{m}\}$ and every
$\jS\in\{1,\ldots,\mathsf{r}\}$, let 
$\mathsf{C}_{\iS}\colon\XS_{\iS}\to 2^{\XS_{\iS}}$ and
$\mathsf{D}_{\jS}\colon\YS_{\jS}\to 2^{\YS_{\jS}}$ be maximally
monotone, and let $\mathsf{M}_{\jS\iS}\colon\XS_{\iS}\to\YS_{\jS}$ 
be linear and bounded. Set
\begin{equation}
\label{e:A}
\begin{cases}
\boldsymbol{\mathsf{M}}\colon\XXS\to\YYS\colon
\boldsymbol{\mathsf{x}}\mapsto\brk1{
\sum_{\iS=1}^{\mathsf{m}}\mathsf{M}_{1\iS}\mathsf{x}_{\iS},
\ldots,
\sum_{\iS=1}^{\mathsf{m}}\mathsf{M}_{\mathsf{r}\iS}
\mathsf{x}_{\iS}}\\
\boldsymbol{\mathsf{C}}\colon\XXS\to 2^{\XXS}\colon
\boldsymbol{\mathsf{x}}\mapsto
\mathsf{C}_1\mathsf{x}_1\times\cdots\times
\mathsf{C}_{\mathsf{m}}\mathsf{x}_{\mathsf{m}}\\
\boldsymbol{\mathsf{D}}\colon\YYS\to 2^{\YYS}\colon
\boldsymbol{\mathsf{y}}\mapsto
\mathsf{D}_1\mathsf{y}_1\times\cdots\times
\mathsf{D}_{\mathsf{r}}\mathsf{y}_{\mathsf{r}}.
\end{cases}
\end{equation}
The task is to
\begin{equation}
\label{e:p2}
\text{find}\;\;\boldsymbol{\mathsf{x}}\in\XXS
\;\;\text{such that}\;\;
\boldsymbol{\mathsf{0}}\in
\boldsymbol{\mathsf{C}}\boldsymbol{\mathsf{x}}
+\boldsymbol{\mathsf{M}}^*\brk1{\boldsymbol{\mathsf{D}}
(\boldsymbol{\mathsf{M}}\boldsymbol{\mathsf{x}})}.
\end{equation}
The set of solutions to \eqref{e:p2} is denoted by
$\boldsymbol{\mathsf{Z}}$ and assumed to be nonempty. Further, the
projection operator onto the subspace 
\begin{equation}
\label{e:2014}
\boldsymbol{\mathsf{V}}=
\menge{(\boldsymbol{\mathsf{x}},\boldsymbol{\mathsf{y}})
\in\XXS\oplus\YYS}{\boldsymbol{\mathsf{y}}=\boldsymbol{\mathsf{M}}
\boldsymbol{\mathsf{x}}}
\end{equation}
is decomposed as 
\begin{equation}
\label{e:hu}
\proj_{\boldsymbol{\mathsf{V}}}
\colon(\boldsymbol{\mathsf{x}},\boldsymbol{\mathsf{y}})\mapsto
\bigl(\mathsf{Q}_{\lS}(\boldsymbol{\mathsf{x}},
\boldsymbol{\mathsf{y}})\bigr)_{1\leq\lS\leq
\mathsf{m}+\mathsf{r}},\quad\text{where}\quad
\begin{cases}
(\forall\iS\in\{1,\ldots,\mathsf{m}\})\;\;
\mathsf{Q}_{\iS}\colon\XXS\oplus\YYS\to\XS_{\iS}\\
(\forall\jS\in\{1,\ldots,\mathsf{r}\})\;\;
\mathsf{Q}_{\mathsf{m}+\jS}\colon\XXS\oplus\YYS\to\YS_{\jS}.
\end{cases}
\end{equation}
\end{problem}

Our approach is ultimately based on the Douglas--Rachford
algorithm implemented in $\XXS\oplus\YYS$. Define 
\begin{equation}
\boldsymbol{\mathsf{A}}\colon\XXS\oplus\YYS\to 2^{\XXS\oplus\YYS}
\colon(\boldsymbol{\mathsf{x}},\boldsymbol{\mathsf{y}})\mapsto
\boldsymbol{\mathsf{C}}\boldsymbol{\mathsf{x}}\times
\boldsymbol{\mathsf{D}}\boldsymbol{\mathsf{y}}
\quad\text{and}\quad\boldsymbol{\mathsf{B}}=
\boldsymbol{\mathsf{N}}_{\boldsymbol{\mathsf{V}}}.
\end{equation}
Then it follows from \cite[Eq.~(5.23)]{Siop15} that 
$(\boldsymbol{\mathsf{x}},\boldsymbol{\mathsf{y}})
\in\zer(\boldsymbol{\mathsf{A}}+\boldsymbol{\mathsf{B}})$ if and
only if 
$\boldsymbol{\mathsf{x}}\in\zer(\boldsymbol{\mathsf{C}}+
\boldsymbol{\mathsf{M}}^*\circ\boldsymbol{\mathsf{D}}\circ
\boldsymbol{\mathsf{M}})$ and $\boldsymbol{\mathsf{y}}
=\boldsymbol{\mathsf{M}}\boldsymbol{\mathsf{x}}$.
We can construct a point in 
$\zer(\boldsymbol{\mathsf{A}}+\boldsymbol{\mathsf{B}})$
iteratively by the Douglas--Rachford algorithm
\cite[Section~26.3]{Livre1}, which requires the resolvents of 
$\boldsymbol{\mathsf{A}}$ and $\boldsymbol{\mathsf{B}}$. By 
\cite[Proposition~23.18]{Livre1}, 
$\boldsymbol{\mathsf{J}}_{\boldsymbol{\mathsf{A}}}$
can be decomposed in terms of $(\mathsf{J}_{\mathsf{C}_1},\ldots,
\mathsf{J}_{\mathsf{C}_{\mathsf{m}}},
\mathsf{J}_{\mathsf{D}_1},\ldots,
\mathsf{J}_{\mathsf{D}_{\mathsf{r}}})$. On the other hand, 
$\boldsymbol{\mathsf{J}}_{\boldsymbol{\mathsf{B}}}=
\proj_{\boldsymbol{\mathsf{V}}}$ and it
follows from \eqref{e:2014} and \cite[Example~29.19(ii)]{Livre1} 
that
\begin{equation}
\label{e:r3}
(\forall\boldsymbol{\mathsf{x}}\in\XXS)
(\forall\boldsymbol{\mathsf{y}}\in\YYS)\quad
\proj_{\boldsymbol{\mathsf{V}}}
(\boldsymbol{\mathsf{x}},\boldsymbol{\mathsf{y}})
=(\boldsymbol{\mathsf{p}},\boldsymbol{\mathsf{M}}
\boldsymbol{\mathsf{p}}),\quad\text{where}\quad
\boldsymbol{\mathsf{p}}=
(\ID+\boldsymbol{\mathsf{M}}^*\circ\boldsymbol{\mathsf{M}})^{-1}
\bigl(\boldsymbol{\mathsf{x}}+
\boldsymbol{\mathsf{M}}^*\boldsymbol{\mathsf{y}}\bigr).
\end{equation}
This operator is decomposed in terms of the operators
$(\mathsf{Q}_{\lS})_{1\leq\lS\leq\mathsf{m}+\mathsf{r}}$ in 
\eqref{e:hu}. The following result provides a randomly
block-activated implementation of this product space version of the
Douglas--Rachford algorithm.

\begin{theorem}[{\cite[Corollary~5.3]{Siop15}}]
\label{t:1}
Consider the setting of Problem~\ref{prob:2}. Set 
$\mathsf{O}=\{0,1\}^{\mathsf{m}+\mathsf{r}}\smallsetminus
\{\boldsymbol{\mathsf{0}}\}$, let $\upgamma\in\RPP$, let 
$(\uplambda_{\nS})_{\nnn}$ be a sequence in $\left]0,2\right[$ such
that $\inf_{\nnn}\uplambda_{\nS}>0$ and 
$\sup_{\nnn}\uplambda_{\nS}<2$, let $\boldsymbol{x}_0$, 
$\boldsymbol{z}_0$, $(\boldsymbol{a}_{\nS})_{\nnn}$, and 
$(\boldsymbol{c}_{\nS})_{\nnn}$
be $\XXS$-valued random variables, let $\boldsymbol{y}_0$,
$\boldsymbol{w}_0$, $(\boldsymbol{b}_{\nS})_{\nnn}$, and 
$(\boldsymbol{d}_{\nS})_{\nnn}$ be $\YYS$-valued random 
variables, and let $(\boldsymbol{\varepsilon}_{\nS})_{\nnn}$ be 
identically distributed $\mathsf{O}$-valued random variables. 
Iterate
\begin{equation}
\label{e:a83}
\begin{array}{l}
\text{for}\;\nS=0,1,\ldots\\
\left\lfloor
\begin{array}{l}
\text{for}\;\iS=1,\ldots,\mathsf{m}\\
\left\lfloor
\begin{array}{l}
x_{\iS,\nS+1}=x_{\iS,\nS}+\varepsilon_{\iS,\nS}
\bigl(\mathsf{Q}_{\iS}(\boldsymbol{z}_{\nS},\boldsymbol{w}_{\nS})
+a_{\iS,\nS}-x_{\iS,\nS}\bigr)\\[1mm]
z_{\iS,\nS+1}=z_{\iS,\nS}+\varepsilon_{\iS,\nS}\uplambda_{\nS}
\bigl(\mathsf{J}_{\upgamma\mathsf{C}_{\iS}}
(2x_{\iS,\nS+1}-z_{\iS,\nS})+c_{\iS,\nS}-x_{\iS,\nS+1}\bigr)
\end{array}
\right.\\
\text{for}\;\jS=1,\ldots,\mathsf{r}\\
\left\lfloor
\begin{array}{l}
y_{\jS,\nS+1}=y_{\jS,\nS}+\varepsilon_{\mathsf{m}+\jS,\nS}
\big(\mathsf{Q}_{\mathsf{m}+\jS}
(\boldsymbol{z}_{\nS},\boldsymbol{w}_{\nS})+b_{\jS,\nS}
-y_{\jS,\nS}\big)\\[1mm]
w_{\jS,\nS+1}=w_{\jS,\nS}
+\varepsilon_{\mathsf{m}+\jS,\nS}\uplambda_{\nS}
\bigl(\mathsf{J}_{\upgamma\mathsf{D}_{\jS}}
(2y_{\jS,\nS+1}-w_{\jS,\nS})+d_{\jS,\nS}-y_{\jS,\nS+1}\bigr),
\end{array}
\right.
\end{array}
\right.\\
\end{array}
\end{equation}
and set $(\forall\nnn)$ $\boldsymbol{\EuScript{E}}_{\nS}=
\upsigma(\boldsymbol{\varepsilon}_{\nS})$
and $\boldsymbol{\EuScript{S}}_{\nS}=\upsigma
(\boldsymbol{z}_{\lS},\boldsymbol{w}_{\lS})_{0\leq{\lS}\leq{\nS}}$.
In addition, assume that the following are satisfied:
\begin{enumerate}
\item
$\sum_{\nnn}\sqrt{\EC{\|\boldsymbol{a}_{\nS}\|^2}
{\boldsymbol{\EuScript{S}}_{\nS}}}<\pinf$,
$\sum_{\nnn}\sqrt{\EC{\|\boldsymbol{b}_{\nS}\|^2}
{\boldsymbol{\EuScript{S}}_{\nS}}}<\pinf$,
$\sum_{\nnn}\sqrt{\EC{\|\boldsymbol{c}_{\nS}\|^2}
{\boldsymbol{\EuScript{S}}_{\nS}}}<\pinf$,
$\sum_{\nnn}\sqrt{\EC{\|\boldsymbol{d}_{\nS}\|^2}
{\boldsymbol{\EuScript{S}}_{\nS}}}<\pinf$, 
$\boldsymbol{a}_{\nS}\weakly\boldsymbol{\mathsf{0}}\:\Pas$, and 
$\boldsymbol{b}_{\nS}\weakly\boldsymbol{\mathsf{0}}\:\Pas$
\item
For every $\nnn$, $\boldsymbol{\EuScript{E}}_{\nS}$ and
$\boldsymbol{\EuScript{S}}_{\nS}$ are independent.
\item
For every $\lS\in\{1,\ldots,\mathsf{m}+\mathsf{r}\}$, 
$\PP[\varepsilon_{\lS,0}=1]>0$.
\end{enumerate}
Then $(\boldsymbol{x}_{\nS})_{\nnn}$ converges weakly $\Pas$ to a
$\boldsymbol{\mathsf{Z}}$-valued random variable.
\end{theorem}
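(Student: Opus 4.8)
The plan is to identify \eqref{e:a83} as a randomly block-activated Krasnosel'ski\u{\i}--Mann iteration governed by the Douglas--Rachford operator associated with the pair $(\boldsymbol{\mathsf{A}},\boldsymbol{\mathsf{B}})$ on $\XXS\oplus\YYS$, and then to appeal to the general stochastic block-coordinate fixed-point theory of \cite{Siop15}. Fix $\upgamma\in\RPP$ and, matching the order of the resolvent steps in \eqref{e:a83}, introduce the Douglas--Rachford operator
\begin{equation}
\boldsymbol{\mathsf{T}}=\ID-\boldsymbol{\mathsf{J}}_{\upgamma\boldsymbol{\mathsf{B}}}+\boldsymbol{\mathsf{J}}_{\upgamma\boldsymbol{\mathsf{A}}}\circ\bigl(2\boldsymbol{\mathsf{J}}_{\upgamma\boldsymbol{\mathsf{B}}}-\ID\bigr).
\end{equation}
By \cite[Section~26.3]{Livre1}, $\boldsymbol{\mathsf{T}}=\tfrac12(\ID+\boldsymbol{\mathsf{R}}_{\upgamma\boldsymbol{\mathsf{A}}}\boldsymbol{\mathsf{R}}_{\upgamma\boldsymbol{\mathsf{B}}})$ is $1/2$-averaged and $\boldsymbol{\mathsf{J}}_{\upgamma\boldsymbol{\mathsf{B}}}\bigl(\Fix\boldsymbol{\mathsf{T}}\bigr)=\zer(\boldsymbol{\mathsf{A}}+\boldsymbol{\mathsf{B}})$; in particular, since the discussion preceding the theorem shows that $\boldsymbol{\mathsf{Z}}\neq\emp$ forces $\zer(\boldsymbol{\mathsf{A}}+\boldsymbol{\mathsf{B}})\neq\emp$, we obtain $\Fix\boldsymbol{\mathsf{T}}\neq\emp$.

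Next I would check that \eqref{e:a83} is exactly the coordinatewise form of the recursion $(\boldsymbol{z}_{\nS+1},\boldsymbol{w}_{\nS+1})=(\boldsymbol{z}_{\nS},\boldsymbol{w}_{\nS})+\uplambda_{\nS}\boldsymbol{\Lambda}_{\nS}\bigl(\boldsymbol{\mathsf{T}}(\boldsymbol{z}_{\nS},\boldsymbol{w}_{\nS})-(\boldsymbol{z}_{\nS},\boldsymbol{w}_{\nS})\bigr)$ perturbed by the errors $(\boldsymbol{a}_{\nS},\boldsymbol{b}_{\nS},\boldsymbol{c}_{\nS},\boldsymbol{d}_{\nS})$, where $\boldsymbol{\Lambda}_{\nS}$ is the diagonal block-selector encoded by $\boldsymbol{\varepsilon}_{\nS}$. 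Indeed, $\boldsymbol{\mathsf{J}}_{\upgamma\boldsymbol{\mathsf{B}}}=\proj_{\boldsymbol{\mathsf{V}}}$ has $\lS$-th component $\mathsf{Q}_{\lS}$ by \eqref{e:r3} and \eqref{e:hu}, so the inner $\boldsymbol{x}$- and $\boldsymbol{y}$-updates build, block by block, $\proj_{\boldsymbol{\mathsf{V}}}(\boldsymbol{z}_{\nS},\boldsymbol{w}_{\nS})$, while the $\boldsymbol{z}$- and $\boldsymbol{w}$-updates apply $\boldsymbol{\mathsf{J}}_{\upgamma\boldsymbol{\mathsf{A}}}$ (which splits into $(\mathsf{J}_{\upgamma\mathsf{C}_{\iS}})_{\iS}$ and $(\mathsf{J}_{\upgamma\mathsf{D}_{\jS}})_{\jS}$ by \cite[Proposition~23.18]{Livre1}) to the reflected point $2(\boldsymbol{x}_{\nS+1},\boldsymbol{y}_{\nS+1})-(\boldsymbol{z}_{\nS},\boldsymbol{w}_{\nS})$, with the $\lS$-th coordinate frozen whenever $\varepsilon_{\lS,\nS}=0$.

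It then remains to feed this into the block-coordinate averaged-operator theorem of \cite{Siop15}: the relaxation parameters stay in a compact subinterval of $\left]0,2\right[$; by hypotheses~(ii) and~(iii) the selectors $(\boldsymbol{\varepsilon}_{\nS})_{\nnn}$ are identically distributed, independent of the past $\boldsymbol{\EuScript{S}}_{\nS}$, and activate each of the $\mathsf{m}+\mathsf{r}$ coordinates with positive probability; and hypothesis~(i) furnishes the conditional square-summability of the errors, the extra requirements $\boldsymbol{a}_{\nS}\weakly\boldsymbol{\mathsf{0}}$ and $\boldsymbol{b}_{\nS}\weakly\boldsymbol{\mathsf{0}}\,\Pas$ absorbing the perturbation of the inner projection step into the averaged-operator error bound. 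Granting these, the framework produces a $\Fix\boldsymbol{\mathsf{T}}$-valued random variable to which $(\boldsymbol{z}_{\nS},\boldsymbol{w}_{\nS})$ converges weakly $\Pas$, and, having controlled the inactive-block lag, also delivers weak $\Pas$ convergence of the activated output $(\boldsymbol{x}_{\nS},\boldsymbol{y}_{\nS})$; since $\proj_{\boldsymbol{\mathsf{V}}}$ is linear, hence weakly continuous, this limit must be $\proj_{\boldsymbol{\mathsf{V}}}$ of the limiting fixed point, whose $\XXS$-component lies in $\boldsymbol{\mathsf{Z}}$. Reading off the first $\mathsf{m}$ blocks yields the claim for $(\boldsymbol{x}_{\nS})_{\nnn}$.

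I expect the genuine difficulty to lie entirely inside the abstract theorem of \cite{Siop15}, namely the stochastic quasi-Fej\'er analysis: one must condition on $\boldsymbol{\EuScript{S}}_{\nS}$, use the independence in~(ii) to show that the random selection does not bias the averaged-operator descent, so that a supermartingale-type inequality holds for $\EC{\|(\boldsymbol{z}_{\nS},\boldsymbol{w}_{\nS})-\boldsymbol{\mathsf{t}}\|^2}{\boldsymbol{\EuScript{S}}_{\nS}}$ uniformly over $\boldsymbol{\mathsf{t}}\in\Fix\boldsymbol{\mathsf{T}}$, and then combine a Robbins--Siegmund almost sure convergence argument with demiclosedness of $\ID-\boldsymbol{\mathsf{T}}$ to pin down a single almost sure weak cluster point. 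Assembling these is precisely the content of \cite{Siop15}, so within the present corollary the only real work is the structural matching described above.
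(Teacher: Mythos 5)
Your proposal is correct and takes essentially the same route as the paper: Theorem~\ref{t:1} is presented there purely as a citation of \cite[Corollary~5.3]{Siop15}, and the discussion preceding it carries out exactly the structural identification you reconstruct---the Douglas--Rachford splitting on $\XXS\oplus\YYS$ with $\boldsymbol{\mathsf{B}}=\boldsymbol{\mathsf{N}}_{\boldsymbol{\mathsf{V}}}$, the decomposition of $\boldsymbol{\mathsf{J}}_{\boldsymbol{\mathsf{A}}}$ into the resolvents $(\mathsf{J}_{\mathsf{C}_{\iS}})$, $(\mathsf{J}_{\mathsf{D}_{\jS}})$, and $\boldsymbol{\mathsf{J}}_{\boldsymbol{\mathsf{B}}}=\proj_{\boldsymbol{\mathsf{V}}}$ realized coordinatewise by the operators $\mathsf{Q}_{\lS}$ of \eqref{e:hu}. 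Like you, the paper delegates the entire stochastic quasi-Fej\'er/Robbins--Siegmund analysis, the handling of the lagged inactive blocks, and the measurability of the weak limit (cf.\ Remark~\ref{r:0}) to \cite{Siop15}.
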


\begin{remark}
\label{r:0}
The measurability of the weak limit in \cite[Corollary~5.3]{Siop15}
relies on \cite[Proposition~2.3]{Siop15}, which involves 
Pettis' theorem \cite[Corollary~1.13]{Pett38}. The applicability 
of the latter follows from the separability of $\HS$ and the fact
that $(\upOmega,\FE,\PP)$ is a complete probability space; see
\cite[Sections~1.1a--b]{Hyto16} for details. 
\end{remark}

\begin{remark}
\label{r:1}
At iteration $\nS$, the random variables 
$(\varepsilon_{\iS,\nS})_{1\leq\iS\leq\mathsf{m}}$ and 
$(\varepsilon_{\mathsf{m}+\jS,\nS})_{1\leq\jS\leq\mathsf{r}}$
act as switches which control which components are updated, while
the random variables $a_{\iS,\nS}$, $b_{\jS,\nS}$, $c_{\iS,\nS}$
and $d_{\jS,\nS}$ model approximations in the
implementation of the operators $\mathsf{Q}_{\iS}$,
$\mathsf{Q}_{\jS}$, $\mathsf{J}_{\upgamma\mathsf{C}_{\iS}}$, and 
$\mathsf{J}_{\upgamma\mathsf{D}_{\jS}}$, respectively.
\end{remark}

We now present three frameworks for solving Problem~\ref{prob:1}
which are based on specializations of Theorem~\ref{t:1}. 

\subsection{Framework 1}
\label{sec:f1}
The first approach stems from the observation that 
Problem~\ref{prob:2} reduces to Problem~\ref{prob:1} when
$\mathsf{m}=1$, $\mathsf{r}=\mathsf{p}$, $\XS_1=\HS$, 
$\mathsf{C}_1=\mathsf{A}$, and 
$(\forall\kS\in\{1,\ldots,\mathsf{p}\})$ 
$\YS_{\kS}=\GS_{\kS}$,
$\mathsf{M}_{\kS,1}=\LS_{\kS}$, and
$\mathsf{D}_{\kS}=\mathsf{B}_{\kS}$.
Surprisingly, this basic observation does not seem to have been
exploited in attempts to design random block activation algorithms
for solving Problem~\ref{prob:10} or Problem~\ref{prob:1} (or
special cases thereof) using the
stochastic quasi-Fej\'er framework of \cite{Siop15}; see for
instance \cite{Cham24,Mali20,Mimo24,Peng16}. 

We derive from Theorem~\ref{t:1} the following convergence result.

\begin{proposition}
\label{p:3}
Consider the setting of Problem~\ref{prob:1}.
Set $\mathsf{O}=\{0,1\}^{1+\mathsf{p}}\smallsetminus
\{\boldsymbol{\mathsf{0}}\}$, let $\upgamma\in\RPP$, let
$(\uplambda_{\nS})_{\nnn}$ be a sequence in $\left]0,2\right[$ 
such that $\inf_{\nnn}\uplambda_{\nS}>0$ and 
$\sup_{\nnn}\uplambda_{\nS}<2$, let $x_{1,0}$, $z_{1,0}$,
$(c_{1,\nS})_{\nnn}$, and $(e_{\nS})_{\nnn}$ be $\HS$-valued random
variables, let $\boldsymbol{y}_0$, $\boldsymbol{w}_0$, and 
$(\boldsymbol{d}_{\nS})_{\nnn}$ be $\GGS$-valued random
variables, and let $(\boldsymbol{\varepsilon}_\nS)_{\nnn}$ be 
identically distributed $\mathsf{O}$-valued random variables.
Set $\mathsf{Q}=(\Id+\sum_{\kS=1}^{\mathsf{p}}
\LS_{\kS}^*\circ\LS_{\kS})^{-1}$ and iterate 
\begin{equation}
\label{e:algo1}
\begin{array}{l}
\text{for}\;\nS=0,1,\ldots\\
\left\lfloor
\begin{array}{l}
s_{\mathsf{n}}
=\mathsf{Q}\brk1{z_{1,\nS}+\sum_{\kS=1}^{\mathsf{p}}
\LS_{\kS}^*w_{\kS,\nS}}+e_{\nS}\\
x_{1,\nS+1}=
x_{1,\nS}+\varepsilon_{1,\nS}(s_{\nS}-x_{1,\nS})\\
z_{1,\nS+1}=
z_{1,\nS}+\varepsilon_{1,\nS}\uplambda_{\nS}
\brk1{\mathsf{J}_{\upgamma\mathsf{A}}
(2x_{1,\nS+1}-z_{1,\nS})+c_{1,\nS}-x_{1,\nS+1}}\\
\text{for}\;\kS=1,\dots,\mathsf{p}\\
\left\lfloor
\begin{array}{l}
y_{\kS,\nS+1}=
y_{\kS,\nS}+\varepsilon_{1+\kS,\nS}
(\LS_{\kS}s_{\nS}-{y}_{\kS,\nS})\\
w_{\kS,\nS+1}=w_{\kS,\nS}+\varepsilon_{1+\kS,\nS}\uplambda_{\nS}
\brk1{\mathsf{J}_{\upgamma\mathsf{B}_{\kS}}
(2y_{\kS,\nS+1}-w_{\kS,\nS})+d_{\kS,\nS}-y_{\kS,\nS+1}}.
\end{array}
\right.\\
\end{array}
\right.\\
\end{array}
\end{equation}
In addition, assume that the following are satisfied:
\begin{enumerate}
\item
\label{p:3i}
$\sum_{\nnn}\sqrt{\EC{\|c_{1,\nS}\|^2}
{\upsigma({z}_{1,\lS},\boldsymbol{v}_{\lS})_{0\leq 
\lS\leq\nS}}}<\pinf$,
$\sum_{\nnn}\sqrt{\EC{\|\boldsymbol{d}_{\nS}\|^2}
{\upsigma({z}_{1,\lS},
\boldsymbol{v}_{\lS})_{0\leq\lS\leq\nS}}}<\pinf$,\\
$\sum_{\nnn}\sqrt{\EC{\|e_{\nS}\|^2}
{\upsigma({z}_{1,\lS},\boldsymbol{v}_{\lS})_{0\leq 
\lS\leq\nS}}}<\pinf$, and $e_{\nS}\weakly\mathsf{0}$.
\item
\label{p:3ii}
For every $\nnn$, $\upsigma(\boldsymbol{\varepsilon}_{\mathsf{n}})$
and $\upsigma(z_{1,\lS},\boldsymbol{w}_{\lS})_{0\leq 
\lS\leq\nS}$ are independent.
\item
\label{p:3iii}
For every $\lS\in\{1,\ldots,\mathsf{p}+1\}$,
$\PP[\varepsilon_{\lS,0}=1]>0$.
\end{enumerate}
Then $(x_{1,\nS})_{\nnn}$ converges weakly $\Pas$ to a
$\ZS$-valued random variable. 
\end{proposition}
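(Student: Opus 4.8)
The plan is to derive Proposition~\ref{p:3} directly from Theorem~\ref{t:1} through the reduction recorded at the start of Section~\ref{sec:f1}: take $\mathsf{m}=1$, $\mathsf{r}=\mathsf{p}$, $\XS_1=\HS$, $\mathsf{C}_1=\mathsf{A}$, and, for every $\kS\in\{1,\ldots,\mathsf{p}\}$, $\YS_\kS=\GS_\kS$, $\mathsf{M}_{\kS,1}=\LS_\kS$, and $\mathsf{D}_\kS=\mathsf{B}_\kS$. First I would identify the resulting objects: $\XXS=\HS$, $\YYS=\GGS$, the operator $\boldsymbol{\mathsf{M}}$ of \eqref{e:A} becomes $\boldsymbol{\LS}\colon\HS\to\GGS\colon\mathsf{x}\mapsto(\LS_\kS\mathsf{x})_{1\leq\kS\leq\mathsf{p}}$ with $\boldsymbol{\LS}^*\boldsymbol{\LS}=\sum_{\kS=1}^{\mathsf{p}}\LS_\kS^*\LS_\kS$, and the subspace $\boldsymbol{\mathsf{V}}$ of \eqref{e:2014} coincides with $\boldsymbol{\mathsf{W}}$ of \eqref{e:2024}. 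Since $\mathsf{m}=1$ gives $\boldsymbol{\mathsf{C}}=\mathsf{A}$ and $\boldsymbol{\mathsf{D}}=\mathsf{B}_1\times\cdots\times\mathsf{B}_{\mathsf{p}}$, and since $\boldsymbol{\LS}^*\circ\boldsymbol{\mathsf{D}}\circ\boldsymbol{\LS}=\sum_{\kS=1}^{\mathsf{p}}\LS_\kS^*\circ\mathsf{B}_\kS\circ\LS_\kS$, the solution set satisfies $\boldsymbol{\mathsf{Z}}=\zer(\mathsf{A}+\boldsymbol{\LS}^*\circ\boldsymbol{\mathsf{D}}\circ\boldsymbol{\LS})=\ZS$, which is nonempty by the standing hypothesis of Problem~\ref{prob:1}; hence the setting of Problem~\ref{prob:2} is in force. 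Specializing the decomposition \eqref{e:hu} and the projection formula \eqref{e:r3} then gives $\mathsf{Q}_1(\boldsymbol{\mathsf{x}},\boldsymbol{\mathsf{y}})=\mathsf{Q}(\mathsf{x}_1+\sum_{\kS=1}^{\mathsf{p}}\LS_\kS^*\mathsf{y}_\kS)$ and $\mathsf{Q}_{1+\kS}(\boldsymbol{\mathsf{x}},\boldsymbol{\mathsf{y}})=\LS_\kS\mathsf{Q}_1(\boldsymbol{\mathsf{x}},\boldsymbol{\mathsf{y}})$, where $\mathsf{Q}=(\Id+\sum_{\kS=1}^{\mathsf{p}}\LS_\kS^*\LS_\kS)^{-1}$ is the operator introduced in Proposition~\ref{p:3}.

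Next I would match the iterations. Because $\mathsf{C}_1=\mathsf{A}$ and $\mathsf{D}_\kS=\mathsf{B}_\kS$, the resolvents $\mathsf{J}_{\upgamma\mathsf{C}_1}$ and $\mathsf{J}_{\upgamma\mathsf{D}_\kS}$ appearing in \eqref{e:a83} are $\mathsf{J}_{\upgamma\mathsf{A}}$ and $\mathsf{J}_{\upgamma\mathsf{B}_\kS}$, exactly as in \eqref{e:algo1}, and the switch indices $\varepsilon_{1,\nS}$ and $\varepsilon_{1+\kS,\nS}$ coincide with those of \eqref{e:a83}. For the projection steps I would set the error processes of Theorem~\ref{t:1} to $\boldsymbol{a}_\nS=e_\nS$, $\boldsymbol{b}_\nS=\boldsymbol{\LS}e_\nS=(\LS_\kS e_\nS)_{1\leq\kS\leq\mathsf{p}}$, $\boldsymbol{c}_\nS=c_{1,\nS}$, and $\boldsymbol{d}_\nS$ the error of \eqref{e:algo1}. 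With $s_\nS=\mathsf{Q}_1(\boldsymbol{z}_\nS,\boldsymbol{w}_\nS)+e_\nS=\mathsf{Q}(z_{1,\nS}+\sum_{\kS=1}^{\mathsf{p}}\LS_\kS^*w_{\kS,\nS})+e_\nS$, the $\iS=1$ block of \eqref{e:a83} becomes the $x_{1,\nS+1}$ and $z_{1,\nS+1}$ updates of \eqref{e:algo1}, while $\mathsf{Q}_{1+\kS}(\boldsymbol{z}_\nS,\boldsymbol{w}_\nS)+\LS_\kS e_\nS=\LS_\kS\bigl(\mathsf{Q}(z_{1,\nS}+\sum_{\kS'=1}^{\mathsf{p}}\LS_{\kS'}^*w_{\kS',\nS})+e_\nS\bigr)=\LS_\kS s_\nS$ turns each $\jS=\kS$ block into the $y_{\kS,\nS+1}$ and $w_{\kS,\nS+1}$ updates. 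I expect the one point genuinely requiring care to be precisely this step: a single stochastic error $e_\nS$ in evaluating the shared resolvent $\mathsf{Q}$ is reused across all $\mathsf{p}$ dual blocks, so the abstract errors $\boldsymbol{a}_\nS$ and $\boldsymbol{b}_\nS$ of Theorem~\ref{t:1} are not arbitrary but coupled through $\boldsymbol{\LS}$, and one must confirm this restricted form still meets the hypotheses of Theorem~\ref{t:1}.

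Finally I would verify that \ref{p:3i}--\ref{p:3iii} yield (i)--(iii) of Theorem~\ref{t:1} for these processes, noting that here $\boldsymbol{\EuScript{S}}_\nS=\upsigma(z_{1,\lS},\boldsymbol{w}_\lS)_{0\leq\lS\leq\nS}$. The summability of $\boldsymbol{a}_\nS=e_\nS$, $\boldsymbol{c}_\nS=c_{1,\nS}$, and $\boldsymbol{d}_\nS$, together with $\boldsymbol{a}_\nS=e_\nS\weakly\mathsf{0}$, is immediate from \ref{p:3i}, and \ref{p:3ii}--\ref{p:3iii} are (ii)--(iii) verbatim. The coupling flagged above is harmless because $\boldsymbol{\LS}$ is linear and bounded: from $\|\boldsymbol{b}_\nS\|^2=\|\boldsymbol{\LS}e_\nS\|^2\leq\|\boldsymbol{\LS}\|^2\|e_\nS\|^2$ one obtains $\sum_{\nnn}\sqrt{\EC{\|\boldsymbol{b}_\nS\|^2}{\boldsymbol{\EuScript{S}}_\nS}}\leq\|\boldsymbol{\LS}\|\sum_{\nnn}\sqrt{\EC{\|e_\nS\|^2}{\boldsymbol{\EuScript{S}}_\nS}}<\pinf$, and weak-to-weak continuity of $\boldsymbol{\LS}$ turns $e_\nS\weakly\mathsf{0}$ into $\boldsymbol{b}_\nS=\boldsymbol{\LS}e_\nS\weakly\boldsymbol{\mathsf{0}}$. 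Theorem~\ref{t:1} then delivers weak $\Pas$ convergence of $(\boldsymbol{x}_\nS)_{\nnn}$ to a $\boldsymbol{\mathsf{Z}}$-valued random variable; since $\mathsf{m}=1$ makes $\boldsymbol{x}_\nS=x_{1,\nS}$ and $\boldsymbol{\mathsf{Z}}=\ZS$, this is exactly the assertion that $(x_{1,\nS})_{\nnn}$ converges weakly $\Pas$ to a $\ZS$-valued random variable.
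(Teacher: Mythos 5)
Your proposal is correct and follows essentially the same route as the paper's own proof: specialize Theorem~\ref{t:1} with $\mathsf{m}=1$, $\mathsf{r}=\mathsf{p}$, $\mathsf{C}_1=\mathsf{A}$, $\mathsf{M}_{\kS,1}=\LS_{\kS}$, $\mathsf{D}_{\kS}=\mathsf{B}_{\kS}$, set the coupled errors $a_{1,\nS}=e_{\nS}$ and $b_{\kS,\nS}=\LS_{\kS}e_{\nS}$, and check the summability and weak-convergence hypotheses via boundedness (hence weak-to-weak continuity) of the linear operators. The paper bounds $\|\boldsymbol{b}_{\nS}\|^2$ by $\bigl(\sum_{\kS=1}^{\mathsf{p}}\|\LS_{\kS}\|^2\bigr)\|e_{\nS}\|^2$ rather than by $\|\boldsymbol{\LS}\|^2\|e_{\nS}\|^2$, but this is the same argument, and your explicit identification of $\mathsf{Q}_1$ and $\mathsf{Q}_{1+\kS}=\LS_{\kS}\circ\mathsf{Q}_1$ merely spells out what the paper leaves implicit in the form of \eqref{e:algo1}.
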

\begin{proof}
In Problem~\ref{prob:2}, set $\mathsf{m}=1$, 
$\mathsf{r}=\mathsf{p}$, $\XS_1=\HS$, $\mathsf{C}_1=\mathsf{A}$, 
and, for every $\kS\in\{1,\ldots,\mathsf{p}\}$, 
$\YS_{\kS}=\GS_{\kS}$, $\mathsf{M}_{\kS,1}=\LS_{\kS}$, and 
$\mathsf{D}_{\kS}=\mathsf{B}_{\kS}$. 
Further, for every $\nnn$, set $a_{1,\nS}=e_{\nS}$ and,
for every $\kS\in\{1,\dots,\mathsf{p}\}$, 
set $b_{\kS,\nS}=\LS_{\kS}e_{\nS}$. Then it follows from \ref{p:3i}
that $a_{1,\nS}\weakly\mathsf{0}$ $\Pas$, 
$\boldsymbol{b}_{\nS}\weakly\boldsymbol{\mathsf{0}}$ $\Pas$, and
\begin{align}
\sum_{\nnn}\sqrt{\EC{\|\boldsymbol{b}_{\nS}\|^2}
{{\upsigma(\boldsymbol{z}_{\lS},\boldsymbol{v}_{\lS})_{0\leq 
\lS\leq\nS}}}}
&\leq\sum_{\nnn}\sqrt{\EC4{\brk3{\sum_{\kS=1}^{\mathsf{p}}
\|\LS_{\kS}\|^2}\|e_{\nS}\|^2}
{{\upsigma(\boldsymbol{z}_{\lS},\boldsymbol{v}_{\lS})_{0\leq 
\lS\leq\nS}}}}\nonumber\\
&=\sqrt{\sum_{\kS=1}^{\mathsf{p}}
\|\LS_{\kS}\|^2}\sum_{\nnn}\sqrt{\EC1{\|e_{\nS}\|^2}
{{\upsigma(\boldsymbol{z}_{\lS},\boldsymbol{v}_{\lS})_{0\leq 
\lS\leq\nS}}}}\nonumber\\
&<\pinf. 
\end{align}
The assertion therefore results from Theorem~\ref{t:1}.
\end{proof}

\subsection{Framework 2}
\label{sec:f2}

In Framework~1, Problem~\ref{prob:2} collapses to
Problem~\ref{prob:1} by reducing the number of agents to
$\mathsf{m}=1$. Here, we use $\mathsf{m}=\mathsf{p}+1$ agents in
Problem~\ref{prob:2} and capture Problem~\ref{prob:1} by forcing
these agents $(\mathsf{x}_{1},\ldots,\mathsf{x}_{\mathsf{p}+1})$ to
lie in the subspace $\boldsymbol{\mathsf{W}}$ of \eqref{e:2024}.

\begin{proposition}
\label{p:21}
Consider the setting of Problem~\ref{prob:1}.
Set $\mathsf{O}=\{0,1\}^{\mathsf{p}+2}\smallsetminus
\{\boldsymbol{\mathsf{0}}\}$, let 
$\upgamma\in\RPP$, let $(\uplambda_{\nS})_{\nnn}$ be a sequence in
$\left]0,2\right[$ such that $\inf_{\nnn}\uplambda_{\nS}>0$ and 
$\sup_{\nnn}\uplambda_{\nS}<2$, let $\boldsymbol{x}_0$, 
$\boldsymbol{z}_0$, $\boldsymbol{u}_0$, $\boldsymbol{v}_0$, and
$(\boldsymbol{c}_{\nS})_{\nnn}$ be 
$\HS\oplus\GGS$-valued random
variables, let $(e_{\nS})_{\nnn}$ be $\HS$-valued random
variables, and let $(\boldsymbol{\varepsilon}_\nS)_{\nnn}$
be identically distributed $\mathsf{O}$-valued random variables. 
Set $\mathsf{Q}=(\Id+\sum_{\kS=1}^{\mathsf{p}}
\LS_{\kS}^*\circ\LS_{\kS})^{-1}$. Iterate 
\begin{equation}
\label{e:algo2}
\begin{array}{l}
\text{for}\;\nS=0,1,\ldots\\
\left\lfloor
\begin{array}{l}
\text{for}\;\iS=1,\dots,\mathsf{p}+1\\
\left\lfloor
\begin{array}{l}
x_{\iS,\nS+1}=x_{\iS,\nS}+\varepsilon_{\iS,\nS}\Bigl(
\dfrac{z_{\iS,\nS}+v_{\iS,\nS}}{2}-x_{\iS,\nS}\Bigr)\\[3mm]
\end{array}
\right.\\%[7mm]
z_{1,\nS+1}=z_{1,\nS}+\varepsilon_{1,\nS}\uplambda_{\nS}
\bigl(\mathsf{J}_{\upgamma\mathsf{A}}
(2x_{1,\nS+1}-z_{1,\nS})+c_{1,\nS}-x_{1,\nS+1}\bigr)\\
\text{for}\;\kS=1,\dots,\mathsf{p}\\
\left\lfloor
\begin{array}{l}
z_{\kS+1,\nS+1}=z_{\kS+1,\nS}
+\varepsilon_{\kS+1,\nS}\uplambda_{\nS}
\bigl(\mathsf{J}_{\upgamma\mathsf{B}_{\kS}}
(2x_{\kS+1,\nS+1}-z_{\kS+1,\nS})+c_{\kS+1,\nS}
-x_{\kS+1,\nS+1}\bigr)\\
\end{array}
\right.\\%[7mm]
\text{for}\;\kS=1,\dots,\mathsf{p}+1\\
\left\lfloor
\begin{array}{l}
u_{\kS,\nS+1} 
=u_{\kS,\nS}+\varepsilon_{\mathsf{p}+2,\nS}
\Bigl(\dfrac{{z}_{\kS,\nS}+v_{\kS,\nS}}{2}-
u_{\kS,\nS}\Bigr)\\[3mm]
\end{array}
\right.\\%[7mm]
s_{\nS}
=\varepsilon_{\mathsf{p}+2,\nS}\brk2{\mathsf{Q}
\bigl(2u_{1,\nS+1}-v_{1,\nS}+\sum_{\kS=1}^{\mathsf{p}}
\LS_{\kS}^*(2u_{\kS+1,\nS+1}-v_{\kS+1,\nS})\bigr)+e_{\nS}}\\
v_{1,\mathsf{n}+1}
=v_{1,\mathsf{n}}+\varepsilon_{\mathsf{p}+2,\nS}\uplambda_{\nS}
(s_{\nS}-u_{1,\nS+1})\\
\text{for}\;\kS=1,\dots,\mathsf{p}\\
\left\lfloor
\begin{array}{l}
v_{\kS+1,\nS+1}
=v_{\kS+1,\nS}+\varepsilon_{\mathsf{p}+2,\nS}\uplambda_{\nS}
(\LS_{\kS}s_{\nS}-u_{\kS+1,\nS+1}).
\end{array}
\right.\\
\end{array}
\right.\\
\end{array}
\end{equation}
In addition, assume that the following are satisfied:
\begin{enumerate}
\item
\label{p:65i}
$\sum_{\nnn}
\sqrt{\EC{\|\boldsymbol{c}_{\nS}\|^2}
{\upsigma(\boldsymbol{z}_{\lS},
\boldsymbol{v}_{\lS})_{0\leq\lS\leq\nS}}}<\pinf$ and 
$\sum_{\nnn}\sqrt{\EC{\|e_{\nS}\|^2}
{\upsigma(\boldsymbol{z}_{\lS},\boldsymbol{v}_{\lS})_{0\leq 
\lS\leq\nS}}}<\pinf$.
\item
\label{p:65ii}
For every $\nnn$, $\upsigma(\boldsymbol{\varepsilon}_{\mathsf{n}})$
and $\upsigma(\boldsymbol{z}_{\lS},\boldsymbol{v}_{\lS})_{0\leq 
\lS\leq\nS}$ are independent.
\item
\label{p:65iii}
For every $\lS\in\{1,\ldots,\mathsf{p}+2\}$,
$\PP[\varepsilon_{\lS,0}=1]>0$.
\end{enumerate}
Then $(x_{1,\nS})_{\nnn}$ converges weakly $\Pas$ to a $\ZS$-valued
random variable. 
\end{proposition}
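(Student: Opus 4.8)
The plan is to realize \eqref{e:algo2} as the instance of the randomly block-activated Douglas--Rachford iteration \eqref{e:a83} produced by a specific embedding of Problem~\ref{prob:1} into Problem~\ref{prob:2}. Concretely, I would take $\mathsf{m}=\mathsf{p}+1$ agents and a single dual block $\mathsf{r}=1$, with $\XS_1=\HS$ and $\XS_{\kS+1}=\GS_{\kS}$ for $\kS\in\{1,\ldots,\mathsf{p}\}$, so that $\XXS=\HS\oplus\GGS$, together with $\YS_1=\XXS$. The nonlinear operators are distributed as $\mathsf{C}_1=\mathsf{A}$ and $\mathsf{C}_{\kS+1}=\mathsf{B}_{\kS}$, the linear operator is the identity $\boldsymbol{\mathsf{M}}=\ID$ on $\XXS$, and the coupling $\mathsf{x}_{\kS+1}=\LS_{\kS}\mathsf{x}_1$ is enforced through the single maximally monotone operator $\mathsf{D}_1=\mathsf{N}_{\boldsymbol{\mathsf{W}}}$, the normal cone of the subspace $\boldsymbol{\mathsf{W}}$ of \eqref{e:2024}. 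With these choices \eqref{e:p2} becomes $\boldsymbol{\mathsf{0}}\in\boldsymbol{\mathsf{C}}\boldsymbol{\mathsf{x}}+\mathsf{N}_{\boldsymbol{\mathsf{W}}}(\boldsymbol{\mathsf{x}})$, and invoking the description \eqref{e:2024d} of $\boldsymbol{\mathsf{W}}^{\bot}$ shows that a point $\boldsymbol{\mathsf{x}}$ lies in $\boldsymbol{\mathsf{Z}}$ if and only if $\boldsymbol{\mathsf{x}}\in\boldsymbol{\mathsf{W}}$ and $\mathsf{0}\in\mathsf{A}\mathsf{x}_1+\sum_{\kS=1}^{\mathsf{p}}\LS_{\kS}^*\brk1{\mathsf{B}_{\kS}(\LS_{\kS}\mathsf{x}_1)}$, that is, $\mathsf{x}_1\in\ZS$ and $\mathsf{x}_{\kS+1}=\LS_{\kS}\mathsf{x}_1$. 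In particular $\boldsymbol{\mathsf{Z}}\neq\emp$ follows from $\ZS\neq\emp$, so Problem~\ref{prob:2} is well posed.

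Next I would make the operators in \eqref{e:a83} explicit. Since $\mathsf{N}_{\boldsymbol{\mathsf{W}}}$ is the normal cone of a subspace, $\mathsf{J}_{\upgamma\mathsf{D}_1}=\proj_{\boldsymbol{\mathsf{W}}}$, and a direct minimization gives $\proj_{\boldsymbol{\mathsf{W}}}(\mathsf{t}_1,\ldots,\mathsf{t}_{\mathsf{p}+1})=(\mathsf{q},\LS_1\mathsf{q},\ldots,\LS_{\mathsf{p}}\mathsf{q})$ with $\mathsf{q}=\mathsf{Q}(\mathsf{t}_1+\sum_{\kS=1}^{\mathsf{p}}\LS_{\kS}^*\mathsf{t}_{\kS+1})$, where $\mathsf{Q}=(\Id+\sum_{\kS=1}^{\mathsf{p}}\LS_{\kS}^*\circ\LS_{\kS})^{-1}$ is exactly the operator appearing in \eqref{e:algo2}. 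Because $\boldsymbol{\mathsf{M}}=\ID$, formula \eqref{e:r3} yields $\boldsymbol{\mathsf{p}}=\tfrac{1}{2}(\boldsymbol{\mathsf{x}}+\boldsymbol{\mathsf{y}})$ and hence $\proj_{\boldsymbol{\mathsf{V}}}(\boldsymbol{\mathsf{x}},\boldsymbol{\mathsf{y}})=(\boldsymbol{\mathsf{p}},\boldsymbol{\mathsf{p}})$, so the operators $(\mathsf{Q}_{\lS})_{1\leq\lS\leq\mathsf{p}+2}$ of \eqref{e:hu} are the componentwise averages $\mathsf{Q}_{\iS}(\boldsymbol{\mathsf{x}},\boldsymbol{\mathsf{y}})=\tfrac{1}{2}(\mathsf{x}_{\iS}+\mathsf{y}_{\iS})$; this reproduces the averaging steps for $x_{\iS,\nS+1}$ and $u_{\kS,\nS+1}$ in \eqref{e:algo2}. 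Identifying the $\XXS$-variables $(\boldsymbol{x}_\nS,\boldsymbol{z}_\nS)$ of \eqref{e:a83} with those of \eqref{e:algo2} and the $\YYS$-variables $(\boldsymbol{y}_\nS,\boldsymbol{w}_\nS)$ with $(\boldsymbol{u}_\nS,\boldsymbol{v}_\nS)$, and choosing the error processes $\boldsymbol{a}_\nS=\boldsymbol{b}_\nS=\boldsymbol{\mathsf{0}}$, $\boldsymbol{c}_\nS$ as in \eqref{e:algo2}, and $\boldsymbol{d}_{\nS}=(e_\nS,\LS_1 e_\nS,\ldots,\LS_{\mathsf{p}} e_\nS)$, I would verify term by term that the specialized \eqref{e:a83} coincides with \eqref{e:algo2}. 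Here the auxiliary variable $s_\nS$ collects the action of $\proj_{\boldsymbol{\mathsf{W}}}$ on $2\boldsymbol{u}_{\nS+1}-\boldsymbol{v}_\nS$, and the extra factor $\varepsilon_{\mathsf{p}+2,\nS}$ in its definition is immaterial to the $\boldsymbol{v}$-update because $\varepsilon_{\mathsf{p}+2,\nS}^2=\varepsilon_{\mathsf{p}+2,\nS}$.

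It then remains to check the hypotheses of Theorem~\ref{t:1}. Conditions \ref{p:65ii} and \ref{p:65iii} transcribe directly into its items~2 and~3, noting that $\upsigma(\boldsymbol{z}_\lS,\boldsymbol{v}_\lS)_{0\leq\lS\leq\nS}=\boldsymbol{\EuScript{S}}_\nS$ since $\boldsymbol{v}=\boldsymbol{w}$; the summability requirement on $\boldsymbol{c}_\nS$ is condition~\ref{p:65i}; and for $\boldsymbol{d}_\nS$ one bounds $\|\boldsymbol{d}_\nS\|^2\leq(1+\sum_{\kS=1}^{\mathsf{p}}\|\LS_{\kS}\|^2)\|e_\nS\|^2$ and invokes condition~\ref{p:65i}, exactly as in the proof of Proposition~\ref{p:3}; finally $\boldsymbol{a}_\nS=\boldsymbol{b}_\nS=\boldsymbol{\mathsf{0}}$ trivially satisfy the remaining error conditions. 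Theorem~\ref{t:1} then yields that $(\boldsymbol{x}_\nS)_{\nnn}$ converges weakly $\Pas$ to a $\boldsymbol{\mathsf{Z}}$-valued random variable $\overline{\boldsymbol{x}}$; since the coordinate projection onto $\XS_1=\HS$ is weakly continuous and maps $\boldsymbol{\mathsf{Z}}$ into $\ZS$ by the characterization above, $(x_{1,\nS})_{\nnn}$ converges weakly $\Pas$ to the $\ZS$-valued random variable $\overline{x}_1$. The main obstacle is not analytic but one of bookkeeping: the two operator families $(\mathsf{Q}_\lS)$ and the four error streams of \eqref{e:a83} must be matched to the single error $e_\nS$ and the variables $s_\nS$, $\boldsymbol{u}_\nS$, $\boldsymbol{v}_\nS$ of \eqref{e:algo2}, and in particular one must confirm that encoding the linear coupling as the resolvent $\proj_{\boldsymbol{\mathsf{W}}}$ of $\mathsf{N}_{\boldsymbol{\mathsf{W}}}$ reproduces precisely the $\mathsf{Q}$-based update appearing in $s_\nS$.
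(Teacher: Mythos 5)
Your proposal is correct and follows essentially the same route as the paper's proof: the same embedding with $\mathsf{m}=\mathsf{p}+1$ agents and $\mathsf{r}=1$, the identity coupling $\boldsymbol{\mathsf{M}}$ (which the paper writes as a sum of canonical injections), $\mathsf{D}_1=\mathsf{N}_{\boldsymbol{\mathsf{W}}}$ with $\proj_{\boldsymbol{\mathsf{W}}}$ expressed through $\mathsf{Q}$, the error identification $\boldsymbol{d}_{\nS}=(e_{\nS},\LS_1e_{\nS},\ldots,\LS_{\mathsf{p}}e_{\nS})$ with the bound $\sqrt{1+\sum_{\kS=1}^{\mathsf{p}}\|\LS_{\kS}\|^2}$, the characterization of $\boldsymbol{\mathsf{Z}}$ via $\boldsymbol{\mathsf{W}}^{\bot}$, and the final appeal to Theorem~\ref{t:1}. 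Your explicit remarks on the idempotence of $\varepsilon_{\mathsf{p}+2,\nS}$ and the weak continuity of the coordinate projection are details the paper leaves implicit, but they do not change the argument.
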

\begin{proof}
In Problem~\ref{prob:2}, set $\mathsf{m}=\mathsf{p}+1$,
$\mathsf{r}=1$, $\XS_1=\HS$, $(\XS_{\iS})_{2\leq\iS\leq\mathsf{m}}=
(\GS_{\iS-1})_{2\leq\iS\leq\mathsf{m}}$, and $\YS_1=\HS\oplus\GGS$.
Thus, $\YYS=\YS_1=\HS\oplus\GGS=\XXS$. Moreover,
for every $\iS\in\{1,\ldots,\mathsf{p}+1\}$, set
\begin{equation}
\mathsf{M}_{1\iS}\colon\XS_{\iS}\to\HS\oplus\GGS
\colon\mathsf{x}_{\iS}
\mapsto\brk1{\mathsf{0},\ldots,\mathsf{0},
\underbrace{\mathsf{x}_{\iS}}_{\iS\text{th position}},
\mathsf{0},\ldots,\mathsf{0}}, 
\end{equation}
which yields
\begin{equation}
\label{e:fa}
\mathsf{M}_{1\iS}^*\colon\HS\oplus\GGS\to\XS_{\iS}\colon
(\mathsf{x}_1^*,\ldots,\mathsf{x}_{\mathsf{p}+1}^*)
\mapsto\mathsf{x}^*_{\iS}.
\end{equation}
Further, denote by 
$\boldsymbol{\mathsf{x}}=(\mathsf{x}_{1},\dots,
\mathsf{x}_{\mathsf{p}+1})$ a generic element in $\HS\oplus\GGS$
and define
$\mathsf{D}_1=\mathsf{N}_{\boldsymbol{\mathsf{W}}}$, where
$\boldsymbol{\mathsf{W}}$ is the subspace of \eqref{e:2024}.
In this configuration, \eqref{e:p2} reduces to 
\begin{equation}
\label{e:p21}
\text{find}\;\;\boldsymbol{\mathsf{x}}\in\HS\oplus\GGS\;\;
\text{such that}\;\;\boldsymbol{\mathsf{0}}\in
\overset{\mathsf{p}+1}{\underset{\iS=1}{\bigtimes}}\;
\mathsf{C}_{\iS}\mathsf{x}_{\mathsf{i}}
+\mathsf{N}_{\boldsymbol{\mathsf{W}}}\boldsymbol{\mathsf{x}}.
\end{equation} 
We observe that 
\begin{equation}
(\forall\iS\in\{1,\ldots,\mathsf{p}+1\})
(\forall\lS\in\{1,\ldots,\mathsf{p}+1\})\quad
\mathsf{M}_{1\iS}^*\circ\mathsf{M}_{1\lS}=
\begin{cases}
\Id,&\text{if}\;\;\iS=\lS;\\
\mathsf{0},&\text{if}\;\;\iS\neq\lS.
\end{cases}
\end{equation}
As a result, 
$(\ID+\boldsymbol{\mathsf{M}}^*\circ
\boldsymbol{\mathsf{M}})^{-1}=(1/2)\ID$ and we derive from
\eqref{e:hu}, \eqref{e:r3}, and \eqref{e:fa} that
\begin{equation}
\label{e:jpf2}
\mathsf{Q}_{\mathsf{p}+2}\colon(\boldsymbol{z},\boldsymbol{v})
\mapsto\dfrac{\boldsymbol{z}+\boldsymbol{v}}{2}
\quad\text{and}\quad
(\forall\lS\in\{1,\ldots,\mathsf{p}+1\})\quad
\mathsf{Q}_{\lS}\colon(\boldsymbol{z},\boldsymbol{v})
\mapsto\dfrac{z_{\lS}+v_{\lS}}{2}.
\end{equation}
Altogether, \eqref{e:a83} with variables
$y_{1,\nS}=\boldsymbol{u}_{\nS}\in\HS\oplus\GGS$ and
$w_{1,\nS}=\boldsymbol{v}_{\nS}\in\HS\oplus\GGS$ becomes
\begin{equation}
\label{e:a89}
\begin{array}{l}
\text{for}\;\nS=0,1,\ldots\\
\left\lfloor
\begin{array}{l}
\text{for}\;\iS=1,\ldots,\mathsf{p}+1\\
\left\lfloor
\begin{array}{l}
x_{\iS,\nS+1}=x_{\iS,\nS}+\varepsilon_{\iS,\nS}
\Bigl(\dfrac{z_{\iS,\nS}+v_{\iS,\nS}}{2}
-x_{\iS,\nS}\Bigr)\\[2mm]
z_{\iS,\nS+1}=z_{\iS,\nS}+\varepsilon_{\iS,\nS}\uplambda_{\nS}
\bigl(\mathsf{J}_{\upgamma\mathsf{C}_{\iS}}
(2x_{\iS,\nS+1}-z_{\iS,\nS})+c_{\iS,\nS}-
x_{\iS,\nS+1}\bigr)
\end{array}
\right.\\[5mm]
\boldsymbol{u}_{\nS+1} 
=\boldsymbol{u}_{\nS}+\varepsilon_{\mathsf{p}+2,\nS}
\Bigl(\dfrac{\boldsymbol{z}_{\nS}+\boldsymbol{v}_{\nS}}{2}
-\boldsymbol{u}_{\nS}\Bigr)\\[2mm]
\boldsymbol{v}_{\nS+1}=\boldsymbol{v}_{\nS}
+\varepsilon_{\mathsf{p}+2,\nS}\uplambda_{\nS}
\bigl(\proj_{\boldsymbol{\mathsf{W}}}
(2\boldsymbol{u}_{\nS+1}-\boldsymbol{v}_{\nS})+
\boldsymbol{d}_{1,\nS}-\boldsymbol{u}_{\nS+1}\bigr),
\end{array}
\right.\\
\end{array}
\end{equation}
where $\boldsymbol{d}_{1,\nS}$ is the error incurred when
projecting onto $\boldsymbol{\mathsf{W}}$ at iteration $\nS$.
We derive from \eqref{e:2024} and 
\cite[Example~29.19(ii)]{Livre1} that
\begin{multline}
\proj_{\boldsymbol{\mathsf{W}}}\colon
(\mathsf{x},\mathsf{y}_1,\ldots,\mathsf{y}_{\mathsf{p}})\mapsto
(\mathsf{s},\LS_1\mathsf{s},\ldots,
\LS_{\mathsf{p}}\mathsf{s}),\\
\text{where}\quad\mathsf{s}=
\brk3{\Id+\sum_{\kS=1}^{\mathsf{p}}\LS_{\kS}^*\circ\LS_{\kS}}^{-1}
\brk3{\mathsf{x}+\sum_{\kS=1}^{\mathsf{p}}\LS_{\kS}^*
\mathsf{y}_{\kS}}.
\end{multline}
Set $(\forall\nnn)$ 
$\boldsymbol{d}_{1,\nS}=(e_{\nS},\LS_1e_{\nS},\ldots,
\LS_{\mathsf{p}}e_{\nS})$. Then we infer from \ref{p:65i} that
\begin{align}
\sum_{\nnn}\sqrt{\EC1{\|\boldsymbol{d}_{1,\nS}\|^2}
{{\upsigma(\boldsymbol{z}_{\lS},\boldsymbol{v}_{\lS})_{0\leq 
\lS\leq\nS}}}}
&\leq\sum_{\nnn}\sqrt{\EC4{\brk3{1+\sum_{\kS=1}^{\mathsf{p}}
\|\LS_{\kS}\|^2}\|e_{\nS}\|^2}
{{\upsigma(\boldsymbol{z}_{\lS},\boldsymbol{v}_{\lS})_{0\leq 
\lS\leq\nS}}}}\nonumber\\
&=\sqrt{1+\sum_{\kS=1}^{\mathsf{p}}
\|\LS_{\kS}\|^2}\sum_{\nnn}\sqrt{\EC1{\|e_{\nS}\|^2}
{{\upsigma(\boldsymbol{z}_{\lS},\boldsymbol{v}_{\lS})_{0\leq 
\lS\leq\nS}}}}\nonumber\\
&<\pinf. 
\end{align}
Thus, it follows from Theorem~\ref{t:1} that, with 
$\boldsymbol{\mathsf{Z}}$ denoting the set of solutions to 
\eqref{e:p21}, 
\begin{multline}
\label{e:n1983}
\text{the sequence}\;
\brk1{x_{1,\nS},x_{2,\nS},\ldots,x_{\mathsf{p}+1,\nS}}_{\nnn}
\;\text{in \eqref{e:a89}}\;
\text{converges weakly}\;\Pas\\ \text{to a}\;\boldsymbol{\ZS}
\text{-valued random variable}
\;\overline{\boldsymbol{x}}=
\brk1{\overline{x}_1,\overline{x}_2,\ldots,
\overline{x}_{\mathsf{p}+1}}
\:\;\text{if}\:\;\boldsymbol{\mathsf{Z}}\neq\emp.
\end{multline}
Next, we specialize \eqref{e:p21} to 
\begin{equation}
\label{e:v1983}
\mathsf{C}_{1}=\mathsf{A}\quad\text{and}\quad
(\forall\iS\in\{2,\ldots,\mathsf{p}+1\})\quad
\mathsf{C}_{\iS}=\mathsf{B}_{\iS-1}. 
\end{equation}
In this context, \eqref{e:a89} reduces to \eqref{e:algo2}.
Recalling that $\mathsf{Z}$ denotes the set of solutions to
Problem~\ref{prob:1}, in view of \eqref{e:n1983}, it remains 
to show that
\begin{equation}
\boldsymbol{\mathsf{Z}}=\menge{
(\mathsf{x}_1,\LS_1\mathsf{x}_1,\dots,
\LS_{\mathsf{p}}\mathsf{x}_1)}{\mathsf{x}_1\in\mathsf{Z}}.
\end{equation}
Let ${\boldsymbol{\mathsf{x}}}\in\HS\oplus\GGS$. We have
\begin{align}
\boldsymbol{\mathsf{x}}\in
\boldsymbol{\mathsf{Z}}
&\Leftrightarrow
\boldsymbol{\mathsf{x}}\;\text{solves \eqref{e:p21}}
\nonumber\\
&\Leftrightarrow\;
\begin{cases}
{\boldsymbol{\mathsf{x}}}\in\boldsymbol{\mathsf{W}}\\
(\exi\boldsymbol{\mathsf{x}}^*\in\boldsymbol{\mathsf{W}}^\bot)
\quad\boldsymbol{\mathsf{0}}\in{\overset{\mathsf{p}+1}
{\underset{\iS=1}{\times}}} 
\mathsf{C}_{\iS}{\mathsf{x}}_{\iS}
+\boldsymbol{\mathsf{x}}^*
\end{cases}
\nonumber\\
&\Leftrightarrow\;
\begin{cases}
(\exi\mathsf{x}_1\in\HS)\quad
{\boldsymbol{\mathsf{x}}}=
(\mathsf{x}_1,\LS_1\mathsf{x}_1,\dots,
\LS_{\mathsf{p}}\mathsf{x}_1)\\
(\exi\boldsymbol{\mathsf{x}}^*\in\boldsymbol{\mathsf{W}}^\bot)
\quad\boldsymbol{\mathsf{0}}\in
\mathsf{A{x}}_1\times
\mathsf{B}_1(\LS_1{\mathsf{x}}_1)\times\cdots\times
\mathsf{B}_{\mathsf{p}}(\LS_{\mathsf{p}}
{\mathsf{x}}_1)+\boldsymbol{\mathsf{x}}^*
\end{cases}
\nonumber\\
&\Leftrightarrow\;
\begin{cases}
(\exi\mathsf{x}_1\in\HS)\quad
{\boldsymbol{\mathsf{x}}}=
(\mathsf{x}_1,\LS_1\mathsf{x}_1,\dots,
\LS_{\mathsf{p}}\mathsf{x}_1)\\
(\exi(\mathsf{y}_1^*,\dots,\mathsf{y}_{\mathsf{p}}^*)\in\GGS)\quad
(\mathsf{0},\mathsf{0},\ldots,\mathsf{0})\in\\
\hspace{22mm}\mathsf{A{x}}_1\times
\mathsf{B}_1(\LS_1{\mathsf{x}}_1)\times\cdots\times
\mathsf{B}_{\mathsf{p}}(\LS_{\mathsf{p}}
{\mathsf{x}}_1)+
\biggl(\Sum_{\kS=1}^{\mathsf{p}}
\LS_{\kS}^*\mathsf{y}_{\kS}^*,-\mathsf{y}_{1}^*,\dots,
-\mathsf{y}_{\mathsf{p}}^*\biggr)
\end{cases}
\nonumber\\
&\Leftrightarrow\;
\begin{cases}
(\exi\mathsf{x}_1\in\HS)\quad
{\boldsymbol{\mathsf{x}}}=
(\mathsf{x}_1,\LS_1\mathsf{x}_1,\dots,
\LS_{\mathsf{p}}\mathsf{x}_1)\\
(\exi(\mathsf{y}_1^*,\dots,\mathsf{y}_{\mathsf{p}}^*)\in\GGS)\quad
\begin{cases}
\mathsf{0}\in\mathsf{A{x}}_1+\sum_{\kS=1}^{\mathsf{p}}
\LS_{\kS}^*\mathsf{y}_{\kS}^*\\
(\forall\kS\in\{1,\ldots,\mathsf{p}\})\quad
\mathsf{y}_{\kS}^*\in\mathsf{B}_{\kS}(\LS_{\kS}
\mathsf{{\mathsf{x}}_1}).
\end{cases}
\end{cases}
\nonumber\\
&\Leftrightarrow\;
\begin{cases}
(\exi\mathsf{x}_1\in\HS)\quad
{\boldsymbol{\mathsf{x}}}=
(\mathsf{x}_1,\LS_1\mathsf{x}_1,\dots,
\LS_{\mathsf{p}}\mathsf{x}_1)\\
\mathsf{0}\in\mathsf{A}{\mathsf{x}}_1+
\sum_{\kS=1}^\mathsf{p}\LS_{\kS}^*\bigl(\mathsf{B}_{\kS}
(\LS_{\kS}{\mathsf{x}}_1)\bigr)
\end{cases}
\nonumber\\
&\Leftrightarrow\;
(\exi\mathsf{x}_1\in\mathsf{Z})\quad
{\boldsymbol{\mathsf{x}}}=
(\mathsf{x}_1,\LS_1\mathsf{x}_1,\dots,
\LS_{\mathsf{p}}\mathsf{x}_1),
\label{e:999}
\end{align}
which completes the proof. 
\end{proof}

\subsection{Framework 3}
\label{sec:f3}

Our last algorithm connects Problem~\ref{prob:1} to
Problem~\ref{prob:2} by means of a coupling operator
$\boldsymbol{\mathsf{E}}$ mapping to an auxiliary space
$\boldsymbol{\mathsf{K}}$ and such that 
$\ker\boldsymbol{\mathsf{E}}$ coincides with the space
$\boldsymbol{\mathsf{W}}$ of \eqref{e:2024}.

\begin{proposition}
\label{p:2}
Consider the setting of Problem~\ref{prob:1},
let $(\mathsf{K}_{\jS})_{1\leq\jS\leq\mathsf{r}}$ be 
separable real Hilbert spaces, set
\begin{equation}
\label{e:66}
\boldsymbol{\mathsf{K}}=\bigoplus_{\jS=1}^{\mathsf{r}}
\mathsf{K}_{\jS},
\end{equation}
and let 
\begin{equation}
\label{e:67}
\boldsymbol{\mathsf{E}}
\colon\HS\oplus\GGS\to
\boldsymbol{\mathsf{K}}\colon\boldsymbol{\mathsf{x}}
\mapsto\Biggl(\sum_{\iS=1}^{\mathsf{p}+1}\mathsf{E}_{\jS\iS}
{\mathsf{x}_{\iS}}\Biggr)_{1\leq\jS\leq\mathsf{r}}
\end{equation}
be linear, bounded, and such that 
$\ker\boldsymbol{\mathsf{E}}=\boldsymbol{\mathsf{W}}$. Define 
$\boldsymbol{\mathsf{V}}$ as in \eqref{e:2014}, where 
$\XXS$ is replaced with $\HS\oplus\GGS$, 
$\YYS$ with $\boldsymbol{\mathsf{K}}$, 
and $\boldsymbol{\mathsf{M}}$ with 
$\boldsymbol{\mathsf{E}}$, 
and decompose its projection
operator as $\proj_{\boldsymbol{\mathsf{V}}}\colon
\boldsymbol{\mathsf{x}}\mapsto
(\mathsf{R}_{\jS}\boldsymbol{\mathsf{x}})_{1\leq\jS\leq
\mathsf{p}+1+\mathsf{r}}$, 
where $\mathsf{R}_{1}\colon\HS\oplus\GGS\oplus\KKS\to\HS$, 
$(\forall\iS\in\{1,\ldots,\mathsf{p}\})$ 
$\mathsf{R}_{1+\iS}\colon\HS\oplus\GGS\oplus\KKS\to\GS_{\iS}$, 
and $(\forall\kS\in\{1,\ldots,\mathsf{r}\})$ 
$\mathsf{R}_{\mathsf{p}+1+\kS}\colon\HS\oplus\GGS\oplus\KKS\to
\mathsf{K}_{\kS}$. 
Set $\mathsf{O}=\{0,1\}^{\mathsf{p}+1+\mathsf{r}}\smallsetminus
\{\boldsymbol{\mathsf{0}}\}$, let $\upgamma\in\RPP$, let 
$(\uplambda_{\nS})_{\nnn}$ be a sequence in $\left]0,2\right[$ such
that $\inf_{\nnn}\uplambda_{\nS}>0$ and 
$\sup_{\nnn}\uplambda_{\nS}<2$, let $\boldsymbol{x}_0$, 
$\boldsymbol{z}_0$, $(\boldsymbol{a}_{\nS})_{\nnn}$, and 
$(\boldsymbol{c}_{\nS})_{\nnn}$ be 
$\HS\oplus\GGS$-valued random 
variables, let $\boldsymbol{y}_0$, $\boldsymbol{w}_0$, and
$(\boldsymbol{b}_{\nS})_{\nnn}$ be 
$\boldsymbol{\mathsf{K}}$-valued random variables, and let 
$(\boldsymbol{\varepsilon}_{\nS})_{\nnn}$ be identically 
distributed $\mathsf{O}$-valued random variables. Iterate
\begin{equation}
\label{e:algo3}
\begin{array}{l}
\text{for}\;\nS=0,1,\ldots\\
\left\lfloor
\begin{array}{l}
{x}_{1,\nS+1}=
{x}_{1,\nS}+\varepsilon_{1,\nS}
\bigl(\mathsf{R}_{1}
(\boldsymbol{z}_{\nS},\boldsymbol{w}_{\nS})+a_{1,\nS}-x_{1,\nS}
\bigr)\\
z_{1,\nS+1}=
z_{1,\nS}+\varepsilon_{1,\nS}\uplambda_{\nS}
\bigl(\mathsf{J}_{\upgamma \mathsf{A}}
\left(2x_{1,\nS+1}-z_{1,\nS}\right)+c_{1,\nS}-x_{1,\nS+1}\bigr)\\
\text{for}\;\kS=1,\dots,\mathsf{p}\\
\left\lfloor
\begin{array}{l}
{x}_{\kS+1,\nS+1}=
{x}_{\kS+1,\nS}+\varepsilon_{\kS+1,\nS}
\bigl(\mathsf{R}_{\kS+1}
(\boldsymbol{z}_{\nS},\boldsymbol{w}_{\nS})+a_{\kS+1,\nS}-
x_{\kS+1,\nS}\bigr)\\
z_{\kS+1,\nS+1}=
z_{\kS+1,\nS}+\varepsilon_{\kS+1,\nS}\uplambda_{\nS}
\bigl(\mathsf{J}_{\upgamma\mathsf{B}_{\kS}}
\left(2x_{\kS+1,\nS+1}-z_{\kS+1,\nS}\right)+c_{\kS+1,\nS}-
x_{\kS+1,\nS+1}\bigr)\\
\end{array}
\right.\\
\text{for}\;\jS=1,\dots,\mathsf{r}\\
\left\lfloor
\begin{array}{l}
{y}_{\jS,\nS+1}=
{y}_{\jS,\nS}+\varepsilon_{\mathsf{p}+1+\jS,\nS}
\bigl(\mathsf{R}_{\mathsf{p}+1+\jS}(\boldsymbol{z}_{\nS},
\boldsymbol{w}_{\nS})+b_{\jS,\nS}-y_{\jS,\nS}\bigr)\\
{w}_{\jS,\nS+1}=
{w}_{\jS,\nS}-\varepsilon_{\mathsf{p}+1+\jS,\nS}
\uplambda_{\nS}y_{\jS,\nS+1}.
\end{array}
\right.\\
\end{array}
\right.\\
\end{array}
\end{equation}
In addition, assume that the following are satisfied:
\begin{enumerate}
\item
\label{p:2i}
$\sum_{\nnn}\sqrt{\EC{\|\boldsymbol{a}_{\nS}\|^2}
{\upsigma(\boldsymbol{z}_{\lS},\boldsymbol{w}_{\lS})_{0\leq 
\lS\leq\nS}}}<\pinf$,
$\sum_{\nnn}\sqrt{\EC{\|\boldsymbol{b}_{\nS}\|^2}
{\upsigma(\boldsymbol{z}_{\lS},\boldsymbol{w}_{\lS})_{0\leq 
\lS\leq\nS}}}<\pinf$,\\
$\sum_{\nnn}\sqrt{\EC{\|\boldsymbol{c}_{\nS}\|^2}
{\upsigma(\boldsymbol{z}_{\lS},\boldsymbol{w}_{\lS})_{0\leq 
\lS\leq\nS}}}<\pinf$, 
$\boldsymbol{a}_{\nS}\weakly\boldsymbol{\mathsf{0}}\:\Pas$, and 
$\boldsymbol{b}_{\nS}\weakly\boldsymbol{\mathsf{0}}\:\Pas$
\item
\label{p:2ii}
For every $\nnn$, $\upsigma(\boldsymbol{\varepsilon}_{\mathsf{n}})$
and $\upsigma(\boldsymbol{z}_{\lS},\boldsymbol{w}_{\lS})_{0\leq 
\lS\leq\nS}$ are independent.
\item
\label{p:2iii}
For every $\lS\in\{1,\ldots,\mathsf{p}+1+\mathsf{r}\}$,
$\PP[\varepsilon_{\lS,0}=1]>0$.
\end{enumerate}
Then $(x_{1,\nS})_{\nnn}$ converges weakly $\Pas$ to a 
$\ZS$-valued random variable. 
\end{proposition}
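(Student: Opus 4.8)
The plan is to recognize \eqref{e:algo3} as a special case of the randomly block-activated Douglas--Rachford iteration \eqref{e:a83} of Theorem~\ref{t:1}, obtained by embedding Problem~\ref{prob:1} into Problem~\ref{prob:2} through the coupling operator $\boldsymbol{\mathsf{E}}$, and then to transfer the conclusion back to Problem~\ref{prob:1}. In Problem~\ref{prob:2} I would take $\mathsf{m}=\mathsf{p}+1$ primal agents with $\XS_1=\HS$ and $\XS_{\iS}=\GS_{\iS-1}$ for $\iS\in\{2,\ldots,\mathsf{p}+1\}$, so that $\XXS=\HS\oplus\GGS$, together with $\mathsf{C}_1=\mathsf{A}$ and $\mathsf{C}_{\iS}=\mathsf{B}_{\iS-1}$. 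On the dual side I would take $\YS_{\jS}=\mathsf{K}_{\jS}$, so $\YYS=\boldsymbol{\mathsf{K}}$, the linear operator $\boldsymbol{\mathsf{M}}=\boldsymbol{\mathsf{E}}$ (that is, $\mathsf{M}_{\jS\iS}=\mathsf{E}_{\jS\iS}$), and, decisively, $\mathsf{D}_{\jS}=\mathsf{N}_{\{\mathsf{0}\}}$ on each $\mathsf{K}_{\jS}$. With these choices the subspace $\boldsymbol{\mathsf{V}}$ and the components $(\mathsf{R}_{\lS})$ of $\proj_{\boldsymbol{\mathsf{V}}}$ coincide with those in the statement, and the primal blocks of \eqref{e:a83} become the primal blocks of \eqref{e:algo3} verbatim.

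The next step is to collapse the dual blocks. Since $\mathsf{J}_{\upgamma\mathsf{N}_{\{\mathsf{0}\}}}=\proj_{\{\mathsf{0}\}}=\mathsf{0}$, the term $\mathsf{J}_{\upgamma\mathsf{D}_{\jS}}(2y_{\jS,\nS+1}-w_{\jS,\nS})$ in the $w$-update of \eqref{e:a83} disappears, and taking the corresponding error $d_{\jS,\nS}\equiv\mathsf{0}$ reduces that update to $w_{\jS,\nS+1}=w_{\jS,\nS}-\varepsilon_{\mathsf{p}+1+\jS,\nS}\uplambda_{\nS}y_{\jS,\nS+1}$, precisely as in \eqref{e:algo3}. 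I would then identify $\boldsymbol{a}_{\nS}$, $\boldsymbol{c}_{\nS}$, and $\boldsymbol{b}_{\nS}$ with the primal projection errors, primal resolvent errors, and dual projection errors of \eqref{e:a83}, set $\boldsymbol{d}_{\nS}\equiv\boldsymbol{\mathsf{0}}$, and verify hypotheses (i)--(iii) of Theorem~\ref{t:1}. The summability and weak-null conditions transfer directly from \ref{p:2i}, the choice $\boldsymbol{d}_{\nS}\equiv\boldsymbol{\mathsf{0}}$ trivially satisfies its share of (i), and \ref{p:2ii}--\ref{p:2iii} are exactly the independence and positive-activation conditions. Theorem~\ref{t:1} then gives that $(\boldsymbol{x}_{\nS})_{\nnn}$ converges weakly $\Pas$ to a $\boldsymbol{\mathsf{Z}}$-valued random variable, where $\boldsymbol{\mathsf{Z}}=\zer(\boldsymbol{\mathsf{C}}+\boldsymbol{\mathsf{E}}^*\circ\boldsymbol{\mathsf{N}}_{\{\boldsymbol{\mathsf{0}}\}}\circ\boldsymbol{\mathsf{E}})$.

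It then remains to identify $\boldsymbol{\mathsf{Z}}$ with the solution set of Problem~\ref{prob:1}. The key is the operator identity $\boldsymbol{\mathsf{E}}^*\circ\boldsymbol{\mathsf{N}}_{\{\boldsymbol{\mathsf{0}}\}}\circ\boldsymbol{\mathsf{E}}=\boldsymbol{\mathsf{N}}_{\boldsymbol{\mathsf{W}}}$. Indeed, evaluating the left side at $\boldsymbol{\mathsf{x}}$ yields $\ran\boldsymbol{\mathsf{E}}^*$ when $\boldsymbol{\mathsf{E}}\boldsymbol{\mathsf{x}}=\boldsymbol{\mathsf{0}}$, i.e. $\boldsymbol{\mathsf{x}}\in\ker\boldsymbol{\mathsf{E}}=\boldsymbol{\mathsf{W}}$, and $\emp$ otherwise, whereas $\boldsymbol{\mathsf{N}}_{\boldsymbol{\mathsf{W}}}\boldsymbol{\mathsf{x}}$ equals $\boldsymbol{\mathsf{W}}^\bot$ on $\boldsymbol{\mathsf{W}}$ and $\emp$ elsewhere; hence the identity reduces to $\ran\boldsymbol{\mathsf{E}}^*=\boldsymbol{\mathsf{W}}^\bot$. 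Granting it, $\boldsymbol{\mathsf{Z}}=\zer(\boldsymbol{\mathsf{C}}+\boldsymbol{\mathsf{N}}_{\boldsymbol{\mathsf{W}}})$ is exactly the solution set of \eqref{e:p21} treated in the proof of Proposition~\ref{p:21}, and the chain of equivalences ending at \eqref{e:999} gives $\boldsymbol{\mathsf{Z}}=\menge{(\mathsf{x}_1,\LS_1\mathsf{x}_1,\ldots,\LS_{\mathsf{p}}\mathsf{x}_1)}{\mathsf{x}_1\in\ZS}$. This set is nonempty because $\ZS\neq\emp$, which legitimizes the invocation of Theorem~\ref{t:1}, and it forces the first component of the weak limit of $(\boldsymbol{x}_{\nS})_{\nnn}$ to be $\ZS$-valued, yielding the claim for $(x_{1,\nS})_{\nnn}$.

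The genuine obstacle is the equality $\ran\boldsymbol{\mathsf{E}}^*=\boldsymbol{\mathsf{W}}^\bot$. The inclusion $\ran\boldsymbol{\mathsf{E}}^*\subseteq\overline{\ran\boldsymbol{\mathsf{E}}^*}=(\ker\boldsymbol{\mathsf{E}})^\bot=\boldsymbol{\mathsf{W}}^\bot$ always holds and already suffices for the forward implication $\boldsymbol{\mathsf{x}}\in\boldsymbol{\mathsf{Z}}\Rightarrow\mathsf{x}_1\in\ZS$; what requires care is the reverse inclusion $\boldsymbol{\mathsf{W}}^\bot\subseteq\ran\boldsymbol{\mathsf{E}}^*$, which is needed to exhibit a point of $\boldsymbol{\mathsf{Z}}$ from a solution of Problem~\ref{prob:1} and which amounts to $\ran\boldsymbol{\mathsf{E}}^*$ (equivalently $\ran\boldsymbol{\mathsf{E}}$) being closed. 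This is precisely the qualification $\boldsymbol{\mathsf{0}}\in\sri\ran\boldsymbol{\mathsf{E}}$ under which the subdifferential chain rule $\partial(\iota_{\{\boldsymbol{\mathsf{0}}\}}\circ\boldsymbol{\mathsf{E}})=\boldsymbol{\mathsf{E}}^*\circ\boldsymbol{\mathsf{N}}_{\{\boldsymbol{\mathsf{0}}\}}\circ\boldsymbol{\mathsf{E}}$ is valid. For the coupling operators used in practice, e.g. $\boldsymbol{\mathsf{x}}\mapsto(\mathsf{x}_{\kS+1}-\LS_{\kS}\mathsf{x}_1)_{1\leq\kS\leq\mathsf{p}}$, the map $\boldsymbol{\mathsf{E}}$ is surjective, so its range is closed and the identification goes through.
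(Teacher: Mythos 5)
Your proposal is correct and follows the paper's own route: the same embedding of Problem~\ref{prob:1} into Problem~\ref{prob:2} with $\mathsf{m}=\mathsf{p}+1$ agents, $\mathsf{D}_{\jS}=\mathsf{N}_{\{\mathsf{0}\}}$, and $\boldsymbol{\mathsf{M}}=\boldsymbol{\mathsf{E}}$; the same identification of \eqref{e:algo3} as an instance of \eqref{e:a83} (you spell out the collapse of the dual updates via $\mathsf{J}_{\upgamma\mathsf{N}_{\{\mathsf{0}\}}}=\mathsf{0}$ and $d_{\jS,\nS}\equiv\mathsf{0}$, which the paper leaves implicit); and the same reduction of the solution-set identification to the chain of equivalences \eqref{e:999} in the proof of Proposition~\ref{p:21}.

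The one divergence is the justification of the key identity $\boldsymbol{\mathsf{E}}^*\circ\mathsf{N}_{\{\boldsymbol{\mathsf{0}}\}}\circ\boldsymbol{\mathsf{E}}=\mathsf{N}_{\boldsymbol{\mathsf{W}}}$. The paper obtains it from $\iota_{\boldsymbol{\mathsf{W}}}=\iota_{\{\boldsymbol{\mathsf{0}}\}}\circ\boldsymbol{\mathsf{E}}$ together with the chain rule \cite[Corollary~16.53]{Livre1}, whereas you compute both sides directly and reduce the identity to $\ran\boldsymbol{\mathsf{E}}^*=\boldsymbol{\mathsf{W}}^\bot$. These are the same mathematics, but your version makes visible what the citation hides: that chain rule carries the qualification condition $\boldsymbol{\mathsf{0}}\in\sri\bigl(\dom\iota_{\{\boldsymbol{\mathsf{0}}\}}-\ran\boldsymbol{\mathsf{E}}\bigr)=\sri(\ran\boldsymbol{\mathsf{E}})$, which for a linear subspace is precisely closedness of $\ran\boldsymbol{\mathsf{E}}$ --- a hypothesis that Proposition~\ref{p:2} does not state and that the paper's proof never verifies. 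Your dissection of the two inclusions is also the right one: $\ran\boldsymbol{\mathsf{E}}^*\subseteq\boldsymbol{\mathsf{W}}^\bot$ is automatic and already yields $\boldsymbol{\mathsf{Z}}\subseteq\menge{(\mathsf{x}_1,\LS_1\mathsf{x}_1,\dots,\LS_{\mathsf{p}}\mathsf{x}_1)}{\mathsf{x}_1\in\ZS}$, so the stated conclusion holds whenever $\boldsymbol{\mathsf{Z}}\neq\emp$; it is the reverse inclusion, i.e.\ range closedness, that is needed to deduce $\boldsymbol{\mathsf{Z}}\neq\emp$ from $\ZS\neq\emp$ and thereby to invoke Theorem~\ref{t:1} at all. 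Since the couplings actually used have closed range (the $\boldsymbol{\mathsf{E}}$ of Example~\ref{ex:11} is surjective, and that of Example~\ref{ex:13} is an orthogonal projection), your proof covers every case the paper does, and it is the more transparent of the two: in infinite dimensions the closedness of $\ran\boldsymbol{\mathsf{E}}$ is a genuine, if mild, additional requirement on the coupling operator that both proofs ultimately rely on, but only yours acknowledges.
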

\begin{proof}
In Problem~\ref{prob:2}, set $\mathsf{m}=\mathsf{p}+1$, 
$\XS_1=\HS$,
$(\XS_{\iS})_{2\leq\iS\leq\mathsf{m}}=
(\GS_{\iS-1})_{2\leq\iS\leq\mathsf{m}}$, $\YYS=\KKS$,
for every $\jS\in\{1,\dots,\mathsf{r}\}$, 
$\mathsf{D}_{\jS}=\mathsf{N}_{\{\mathsf{0}\}}$, and, for every 
$\iS\in\{1,\dots,\mathsf{m}\}$, $\mathsf{M}_{\jS\iS}=
\mathsf{E}_{\jS\iS}$.
Thus, the subspace $\boldsymbol{\mathsf{V}}$ of \eqref{e:2014}
becomes
\begin{equation}
\label{e:20114}
\boldsymbol{\mathsf{V}}=
\Menge3{\boldsymbol{(\mathsf{x}},\boldsymbol{\mathsf{y}})
\in\XXS\oplus\YYS}
{(\forall\jS\in\{1,\ldots,\mathsf{r}\})\;\mathsf{y}_{\jS}=
\sum_{\iS=1}^{\mathsf{p}+1}\mathsf{E}_{\jS\iS}
{\mathsf{x}_{\iS}}},
\end{equation}
Further, denote by 
$\boldsymbol{\mathsf{x}}=(\mathsf{x}_{1},\dots,
\mathsf{x}_{\mathsf{p}+1})$ a generic element in $\HS\oplus\GGS$.
In this configuration, \eqref{e:p2} reduces to 
\begin{equation}
\label{e:p211}
\text{find}\;\;\boldsymbol{\mathsf{x}}\in\HS\oplus\GGS\;\;
\text{such that}\;\;\boldsymbol{\mathsf{0}}\in
\overset{\mathsf{p}+1}{\underset{\iS=1}{\bigtimes}}\;
\mathsf{C}_{\iS}\mathsf{x}_{\mathsf{i}}
+\boldsymbol{\mathsf{E}}^*
\brk1{\mathsf{N}_{\{\boldsymbol{\mathsf{0}}\}}
(\boldsymbol{\mathsf{E}}\boldsymbol{\mathsf{x}})}.
\end{equation} 
We note that Proposition~\ref{p:2} is the application of 
Theorem~\ref{t:1} to \eqref{e:p211} when
\begin{equation}
\label{e:v19831}
\mathsf{C}_{1}=\mathsf{A}\quad\text{and}\quad
(\forall\iS\in\{2,\ldots,\mathsf{p}+1\})\quad
\mathsf{C}_{\iS}=\mathsf{B}_{\iS-1}. 
\end{equation}
Let $\boldsymbol{\mathsf{Z}}$ be the set of solutions to
\eqref{e:p211} in the context of \eqref{e:v19831}.
Recalling that $\mathsf{Z}$ denotes the set of solutions to
Problem~\ref{prob:1}, it remains to show that
\begin{equation}
\boldsymbol{\mathsf{Z}}=\menge{
(\mathsf{x}_1,\LS_1\mathsf{x}_1,\dots,
\LS_{\mathsf{p}}\mathsf{x}_1)}{\mathsf{x}_1\in\mathsf{Z}}.
\end{equation}
Let $\boldsymbol{\mathsf{x}}\in\HS\oplus\GGS$. It follows at once
from \eqref{e:67} that
\begin{equation}
\iota_{\boldsymbol{\mathsf{W}}}(\boldsymbol{\mathsf{x}})=
\iota_{\{\boldsymbol{\mathsf{0}}\}}(\boldsymbol{\mathsf{E}}
\boldsymbol{\mathsf{x}}).
\end{equation}
Hence, we deduce from \cite[Corollary~16.53]{Livre1} that 
\begin{equation}
\label{e:NC}
\mathsf{N}_{\boldsymbol{\mathsf{W}}}\boldsymbol{\mathsf{x}}
=\boldsymbol{\mathsf{E}}^*
\brk1{\mathsf{N}_{\{\boldsymbol{\mathsf{0}}\}}
(\boldsymbol{\mathsf{E}}\boldsymbol{\mathsf{x}})}.
\end{equation}
Note that the set in \eqref{e:NC} is nonempty if and only if
$\boldsymbol{\mathsf{x}}\in\boldsymbol{\mathsf{W}}$.
Consequently,
\begin{align}
\boldsymbol{\mathsf{x}}\in
\boldsymbol{\mathsf{Z}}
\Leftrightarrow
\boldsymbol{\mathsf{x}}\;\text{solves \eqref{e:p211}}
\Leftrightarrow
\boldsymbol{\mathsf{x}}\;\text{solves \eqref{e:p21}}
\end{align}
and the claim follows from \eqref{e:999}.
\end{proof}

Let us provide some examples of implementations of 
Proposition~\ref{p:2}.

\begin{example}
\label{ex:11}
In Proposition~\ref{p:2}, set $\mathsf{r}=\mathsf{p}$, 
$\boldsymbol{\mathsf{K}}=\GGS$, and, for every 
$\kS\in\{1,\dots,\mathsf{p}\}$ and every 
$\iS\in\{1,\dots,\mathsf{p}+1\}$, 
\begin{equation}
\label{e:no}
\mathsf{E}_{\kS\iS}=
\begin{cases}
\phantom{-}\LS_{\kS}, & \text{if}\;\;\iS=1;\\
-\Id,&\text{if}\;\;\iS=\kS+1;\\
\phantom{-}\mathsf{0},& \text{otherwise.}
\end{cases}
\end{equation}
Let $\boldsymbol{\mathsf{x}}\in\HS\oplus\GGS$, 
let $\boldsymbol{\mathsf{y}}\in\GGS$, and set 
$\mathsf{q}=(2\Id+\sum_{\kS=1}^{\mathsf{p}}
\LS_{\kS}^*\circ\LS_{\kS})^{-1}
(2\mathsf{x}_1+
\sum_{\kS=1}^{\mathsf{p}}\LS_{\kS}^*(\mathsf{x}_{\kS+1}
+\mathsf{y}_{\kS}))$.
Then, for every $\iS\in\{1,\dots,\mathsf{p}+1\}$,
\begin{equation}
\mathsf{R}_{\iS}(\boldsymbol{\mathsf{x}},\boldsymbol{\mathsf{y}})=
\begin{cases}
\mathsf{q},
&\text{if}\;\;\iS=1;\\[2mm]
\dfrac{1}{2}\bigl(\LS_{\iS-1}\mathsf{q}
+\mathsf{x}_{\iS}-\mathsf{y}_{\iS-1}\bigr),
&\text{if}\;\;2\leq\iS\leq \mathsf{p}+1;\\[2mm]
\dfrac{1}{2}\bigl(\LS_{\iS-\mathsf{p}-1}
\mathsf{q}-\mathsf{x}_{\iS-\mathsf{p}}
+\mathsf{y}_{\iS-\mathsf{p}-1}\bigr),
&\text{if}\;\;\mathsf{p}+2\leq\iS\leq 2\mathsf{p}+1.
\end{cases}
\end{equation}
Let $(e_{\nS})_{\nnn}$ be $\HS$-valued random variables such that
$\sum_{\nnn}\sqrt{\EC{\|e_{\nS}\|^2}{\upsigma(\boldsymbol{z}_{\lS},
\boldsymbol{w}_{\lS})_{0\leq\lS\leq\nS}}}<\pinf$ and 
$e_{\nS}\weakly\mathsf{0}$ $\Pas$ and set 
\begin{equation}
\label{e:Q}
\mathsf{Q}=\brk3{2\Id+\sum_{\kS=1}^{\mathsf{p}}
\LS_{\kS}^*\circ\LS_{\kS}}^{-1}. 
\end{equation}
Then \eqref{e:algo3} becomes
\begin{equation}
\label{e:ex11}
\begin{array}{l}
\text{for}\;\nS=0,1,\ldots\\
\left\lfloor
\begin{array}{l}
q_{\nS}=
\mathsf{Q}\brk2{2{z}_{1,\nS}+\Sum_{\kS=1}^{\mathsf{p}}
\LS_{\kS}^*({z}_{\kS+1,\nS}+{w}_{\kS,\nS})}+e_{\nS}\\
{x}_{1,\nS+1}=
{x}_{1,\nS}+\varepsilon_{1,\nS}(q_{\nS}-x_{1,\nS})\\
z_{1,\nS+1}=
z_{1,\nS}+\varepsilon_{1,\nS}\uplambda_{\nS}
\bigl(\mathsf{J}_{\upgamma \mathsf{A}}
\left(2x_{1,\nS+1}-z_{1,\nS}\right)+c_{1,\nS}-x_{1,\nS+1}\bigr)\\
\text{for}\;\kS=1,\dots,\mathsf{p}\\
\left\lfloor
\begin{array}{l}
{x}_{\kS+1,\nS+1}=
{x}_{\kS+1,\nS}+\varepsilon_{\kS+1,\nS}
\Bigl(\dfrac{\LS_{\kS}q_{\nS}+z_{\kS+1,\nS}-w_{\kS,\nS}}{2}-
x_{\kS+1,\nS}\Bigr)\\[3mm]
z_{\kS+1,\nS+1}=
z_{\kS+1,\nS}+\varepsilon_{\kS+1,\nS}\uplambda_{\nS}
\bigl(\mathsf{J}_{\upgamma\mathsf{B}_{\kS}}
\left(2x_{\kS+1,\nS+1}-z_{\kS+1,\nS}\right)+c_{\kS+1,\nS}-
x_{\kS+1,\nS+1}\bigr)\\
\end{array}
\right.\\
\text{for}\;\kS=1,\dots,\mathsf{p}\\
\left\lfloor
\begin{array}{l}
{y}_{\kS,\nS+1}=
{y}_{\kS,\nS}+\varepsilon_{\mathsf{p}+1+\kS,\nS}
\Bigl(\dfrac{\LS_{\kS}q_{\nS}-z_{\kS+1,\nS}+w_{\kS,\nS}}{2}-
y_{\kS,\nS}\Bigr)\\[3mm]
{w}_{\kS,\nS+1}=
{w}_{\kS,\nS}-\varepsilon_{\mathsf{p}+1+\kS,\nS}
\uplambda_{\nS}y_{\kS,\nS+1}
\end{array}
\right.\\
\end{array}
\right.\\
\end{array}
\end{equation}
and Proposition~\ref{p:2} asserts that $(x_{1,\nS})_{\nnn}$
converges weakly $\Pas$ to a solution to Problem~\ref{prob:1}.
\end{example}

The next examples focus on the special case of 
Problem~\ref{prob:1} in which,
for every $\kS\in\{1,\dots,\mathsf{p}\}$, $\GS_{\kS}=\HS$ and 
$\LS_{\kS}=\Id$, that is,
\begin{equation}
\label{prob:noL}
\text{find}\;\;{\mathsf{x}\in\HS}\;\;\text{such that}\;\;
{\mathsf{0}\in\mathsf{A}\mathsf{x}+\sum_{\kS=1}^{\mathsf{p}}
\mathsf{B}_{\kS}\mathsf{x}}.
\end{equation}

\begin{example}
\label{ex:12}
Consider the setting of Example~\ref{ex:11} where, for every
$\kS\in\{1,\dots,\mathsf{p}\}$, $\GS_{\kS}=\HS$ and 
$\LS_{\kS}=\Id$. Then, in view of \eqref{e:no} the operator 
$\boldsymbol{\mathsf{E}}$ is
defined by setting, by for every 
$\kS\in\{1,\dots,\mathsf{p}\}$ and every 
$\iS\in\{1,\dots,\mathsf{p}+1\}$, 
\begin{equation}
\mathsf{E}_{\kS\iS}=
\begin{cases}
\phantom{-}\Id, & \text{if}\;\;\iS=1;\\
-\Id,&\text{if}\;\;\iS=\kS+1;\\
\phantom{-}\mathsf{0},& \text{otherwise.}
\end{cases}
\end{equation}
Further, the operator $\mathsf{Q}$ of \eqref{e:Q} is just 
$(\mathsf{p}+2)^{-1}\Id$. Thus, \eqref{e:algo3} becomes
\begin{equation}
\label{e:ex12}
\begin{array}{l}
\text{for}\;\nS=0,1,\ldots\\
\left\lfloor
\begin{array}{l}
q_{\nS}=\dfrac{1}{\mathsf{p}+2}
\brk2{2{z}_{1,\nS}+\Sum_{\kS=1}^{\mathsf{p}}
({z}_{\kS+1,\nS}+{w}_{\kS,\nS})}\\
{x}_{1,\nS+1}=
{x}_{1,\nS}+\varepsilon_{1,\nS}(q_{\nS}-x_{1,\nS})\\
z_{1,\nS+1}=
z_{1,\nS}+\varepsilon_{1,\nS}\uplambda_{\nS}
\bigl(\mathsf{J}_{\upgamma \mathsf{A}}
\left(2x_{1,\nS+1}-z_{1,\nS}\right)+c_{1,\nS}-x_{1,\nS+1}\bigr)\\
\text{for}\;\kS=1,\dots,\mathsf{p}\\
\left\lfloor
\begin{array}{l}
{x}_{\kS+1,\nS+1}=
{x}_{\kS+1,\nS}+\varepsilon_{\kS+1,\nS}
\Bigl(\dfrac{q_{\nS}+z_{\kS+1,\nS}-w_{\kS,\nS}}{2}-
x_{\kS+1,\nS}\Bigr)\\[3mm]
z_{\kS+1,\nS+1}=
z_{\kS+1,\nS}+\varepsilon_{\kS+1,\nS}\uplambda_{\nS}
\bigl(\mathsf{J}_{\upgamma\mathsf{B}_{\kS}}
\left(2x_{\kS+1,\nS+1}-z_{\kS+1,\nS}\right)+c_{\kS+1,\nS}-
x_{\kS+1,\nS+1}\bigr)\\
\end{array}
\right.\\
\text{for}\;\kS=1,\dots,\mathsf{p}\\
\left\lfloor
\begin{array}{l}
{y}_{\kS,\nS+1}=
{y}_{\kS,\nS}+\varepsilon_{\mathsf{p}+1+\kS,\nS}
\Bigl(\dfrac{q_{\nS}-z_{\kS+1,\nS}+w_{\kS,\nS}}{2}-
y_{\kS,\nS}\Bigr)\\[3mm]
{w}_{\kS,\nS+1}=
{w}_{\kS,\nS}-\varepsilon_{\mathsf{p}+1+\kS,\nS}
\uplambda_{\nS}y_{\kS,\nS+1}
\end{array}
\right.\\
\end{array}
\right.\\
\end{array}
\end{equation}
and Proposition~\ref{p:2} asserts that $(x_{1,\nS})_{\nnn}$
converges weakly $\Pas$ to a solution to \eqref{prob:noL}. 
\end{example}

\begin{example}
\label{ex:13}
In Proposition~\ref{p:2}, set $\mathsf{r}=\mathsf{p}+1$, 
$\boldsymbol{\mathsf{K}}=\HS^{\mathsf{p}+1}$, and, for every 
$\kS\in\{1,\dots,\mathsf{p}+1\}$ and every 
$\iS\in\{1,\dots,\mathsf{p}+1\}$, 
\begin{equation}
\mathsf{E}_{\kS\iS}=
\begin{cases}
\phantom{-}\Frac{\mathsf{p}}{\mathsf{p}+1}\Id,
&\text{if}\;\;\kS=\iS;\\[3mm]
-\Frac{1}{\mathsf{p}+1}\Id,&\text{if}\;\;\kS\neq\iS.
\end{cases}
\end{equation}
Then $\ker\boldsymbol{\mathsf{E}}$ is the subspace of all the
vectors ${\boldsymbol{\mathsf{x}}\in\HS^{\mathsf{p}+1}}$ such that,
for every $\iS\in\{1,\dots,\mathsf{p}+1\}$, 
$\mathsf{x}_{\iS}=(\mathsf{p}+1)^{-1}\sum_{\jS=1}^{\mathsf{p}+1}
\mathsf{x}_{\jS}$. Hence, for every 
$\iS\in\{1,\dots,2\mathsf{p}+2\}$,
every $\boldsymbol{\mathsf{x}}\in\HS^{\mathsf{p}+1}$, and 
every $\boldsymbol{\mathsf{y}}\in\HS^{\mathsf{p}+1}$, 
\begin{equation}
\mathsf{R}_{\iS}(\boldsymbol{\mathsf{x}},\boldsymbol{\mathsf{y}})=
\begin{cases}
\dfrac{1}{2}(\mathsf{x}_{\iS}+\mathsf{y}_{\iS})
+\dfrac{1}{2(\mathsf{p}+1)}
\Sum_{\jS=1}^{\mathsf{p}+1}(\mathsf{x}_{\jS}-\mathsf{y}_{\jS}),
&\text{if}\;\;\iS\leq\mathsf{p}+1;\\[1mm]
\dfrac{1}{2}(\mathsf{x}_{\iS}+\mathsf{y}_{\iS})
-\dfrac{1}{2(\mathsf{p}+1)}
\Sum_{\jS=1}^{\mathsf{p}+1}(\mathsf{x}_{\jS}+\mathsf{y}_{\jS}),
&\text{if}\;\;\mathsf{p}+2\leq\iS\leq2\mathsf{p}+2.
\end{cases}
\end{equation}
Then \eqref{e:algo3} becomes
\begin{equation}
\begin{array}{l}
\text{for}\;\nS=0,1,\ldots\\
\left\lfloor
\begin{array}{l}
{x}_{1,\nS+1}=
{x}_{1,\nS}+\varepsilon_{1,\nS}\Bigl(\dfrac{z_{1,\nS}+w_{1,\nS}}{2}
+\dfrac{1}{2(\mathsf{p}+1)}\Sum_{\lS=1}^{\mathsf{p}+1}
(z_{\lS,\nS}-w_{\lS,\nS})-x_{1,\nS}\Bigr)\\[3mm]
z_{1,\nS+1}=
z_{1,\nS}+\varepsilon_{1,\nS}\uplambda_{\nS}
\bigl(\mathsf{J}_{\upgamma \mathsf{A}}
\left(2x_{1,\nS+1}-z_{1,\nS}\right)+c_{1,\nS}-x_{1,\nS+1}\bigr)\\
\text{for}\;\kS=1,\dots,\mathsf{p}\\
\left\lfloor
\begin{array}{l}
{x}_{\kS+1,\nS+1}=
{x}_{\kS+1,\nS}+\varepsilon_{\kS+1,\nS}
\Bigl(\dfrac{z_{\kS+1,\nS}+w_{\kS+1,\nS}}{2}
+\dfrac{1}{2(\mathsf{p}+1)}\Sum_{\lS=1}^{\mathsf{p}+1}
(z_{\lS,\nS}-w_{\lS,\nS})-
x_{\kS+1,\nS}\Bigr)\\[3mm]
z_{\kS+1,\nS+1}=
z_{\kS+1,\nS}+\varepsilon_{\kS+1,\nS}\uplambda_{\nS}
\bigl(\mathsf{J}_{\upgamma\mathsf{B}_{\kS}}
\left(2x_{\kS+1,\nS+1}-z_{\kS+1,\nS}\right)+c_{\kS+1,\nS}-
x_{\kS+1,\nS+1}\bigr)\\
\end{array}
\right.\\
\text{for}\;\jS=1,\dots,\mathsf{p}+1\\
\left\lfloor
\begin{array}{l}
{y}_{\jS,\nS+1}=
{y}_{\jS,\nS}+\varepsilon_{\mathsf{p}+1+\jS,\nS}
\Bigl(\dfrac{z_{\jS,\nS}+w_{\jS,\nS}}{2}
-\dfrac{1}{2(\mathsf{p}+1)}\Sum_{\lS=1}^{\mathsf{p}+1}
(z_{\lS,\nS}+w_{\lS,\nS})-
y_{\jS,\nS}\Bigr)\\[3mm]
{w}_{\jS,\nS+1}=
{w}_{\jS,\nS}-\varepsilon_{\mathsf{p}+1+\jS,\nS}
\uplambda_{\nS}y_{\jS,\nS+1}
\end{array}
\right.\\
\end{array}
\right.\\
\end{array}
\end{equation}
and Proposition~\ref{p:2} asserts that $(x_{1,\nS})_{\nnn}$
converges weakly $\Pas$ to a solution to \eqref{prob:noL}. 
\end{example}

\begin{remark}
\label{r:5}
In Example~\ref{ex:12}, the operator $\boldsymbol{\mathsf{E}}$
applied to $\boldsymbol{\mathsf{x}}\in\HS^{\mathsf{p}+1}$ couples
each agent in $(\mathsf{x}_{2},\ldots,\mathsf{x}_{\mathsf{p}+1})$
with $\mathsf{x}_1$. In Example~\ref{ex:13} the operator
$\boldsymbol{\mathsf{E}}$ applied to
$\boldsymbol{\mathsf{x}}\in\HS^{\mathsf{p}+1}$ couples
each agent in $(\mathsf{x}_{1},\ldots,\mathsf{x}_{\mathsf{p}+1})$
with the average of all the agents. Various alternative coupling
operators $\boldsymbol{\mathsf{E}}$ can be considered to enforce
the condition $\mathsf{x}_1=\cdots=\mathsf{x}_{\mathsf{p}+1}$.
\end{remark}

\subsection{Computation of inverse operators}
\label{sec:35}

The existing algorithms presented in Section~\ref{sec:22} require
the computation of norms of arbitrary linear operators whereas the
proposed algorithms of Section~\ref{sec:f1}--\ref{sec:f3} require
the inversion of strongly positive Hermitian operators of the type
$\ID+\LLS^*\circ\LLS$. Note that, because of the strongly positive
hermitian structure of $\ID+\LLS^*\circ\LLS$, the computation of
the inverse is typically much cheaper than the computation of the
norm of $\LLS$ in \eqref{e:CHcond} or those of
$(\mathsf{U}_{\kS}^{1/2}\LS_{\kS}
\mathsf{W}^{1/2})_{1\leq\kS\leq\mathsf{p}}$ in \eqref{e:PRcond}. In
a finite dimension setting, in full generality, if
$\ID+\LLS^*\circ\LLS$ has size $\mathsf{N}$, its inversion via the
Cholesky decomposition method requires about $\mathsf{N}^3/6$
multiplications. However, this complexity can be reduced in several
standard scenarios. Here are two examples in
$\HS=\RR^{\mathsf{N}}$ that will be used in Section~\ref{sec:5}.

\begin{example}\
\label{ex:inv}
\begin{enumerate}
\item
\label{ex:invi}
If, for every $\kS\in\{1,\ldots,\mathsf{p}\}$, $\LS_{\kS}=\Id$.
Then
\begin{equation}
\label{e:inv1}
\begin{cases}
\brk2{\Id+\sum_{\kS=1}^{\mathsf{p}}\LS_{\kS}^*\circ\LS_{\kS}}^{-1}
=\dfrac{1}{1+\mathsf{p}}\Id\\[3mm]
\brk2{2\Id+\sum_{\kS=1}^{\mathsf{p}}\LS_{\kS}^*\circ\LS_{\kS}}^{-1}
=\dfrac{1}{2+\mathsf{p}}\Id.
\end{cases}
\end{equation}
The cost of the inversion is $O(1)$.
\item
\label{ex:invii}
Suppose that, for every $\kS\in\{1,\ldots,\mathsf{p}\}$,
$\LS_{\kS}$ is a block-Toeplitz. Then, following a standard
argument \cite{Andr77}, each $\LS_{\kS}$ can be approximated by a
block-circulant matrix with convolution kernel $\ell_{\kS}$ and
\begin{equation}
\label{e:inv2}
\begin{cases}
\brk3{\Id+\displaystyle\sum_{\kS=1}^{\mathsf{p}}
\LS_{\kS}^*\circ\LS_{\kS}}^{-1}
&\colon
\mathsf{x}\mapsto\mathfrak{F}^{-1}
\brk3{{\mathfrak{F}(\mathsf{x})}\div
\brk3{{1+\displaystyle\sum_{\kS=1}^{\mathsf{p}}
\abs{\mathfrak{F}(\ell_{\kS})}^2}}}\\[5mm]
\brk3{2\Id+\displaystyle\sum_{\kS=1}^{\mathsf{p}}
\LS_{\kS}^*\circ\LS_{\kS}}^{-1}
&\colon
\mathsf{x}\mapsto\mathfrak{F}^{-1}
\brk3{{\mathfrak{F}(\mathsf{x})}\div
\brk3{2+\displaystyle\sum_{\kS=1}^{\mathsf{p}}
\abs{\mathfrak{F}(\ell_{\kS})}^2}}.
\end{cases}
\end{equation}
where $\mathfrak{F}$ denotes the discrete Fourier transform and 
$\div$ denotes pointwise division. The cost of the inversion using
the fast Fourier transform is $O(\mathsf{N}\log(\mathsf{N}))$
\cite{Andr77}.
\item
\label{ex:inviii}
The worst case is if the operators
$(\LS_{\kS})_{1\leq\kS\leq\mathsf{p}}$ does not present a special
structure. Even so, the composed operators 
$\Id+\sum_{\kS=1}^{\mathsf{p}}\LS_{\kS}^*\circ\LS_{\kS}$ 
and
$2\Id+\sum_{\kS=1}^{\mathsf{p}}\LS_{\kS}^*\circ\LS_{\kS}$ are
symmetric and positive-definite. Hence they admit a Cholesky
decomposition. The cost of computing the Cholesky 
decomposition is $O(\mathsf{N}^3)$ (one time) and the cost of
solving the linear system using the Cholesky decomposition is 
$O(\mathsf{N}^2)$. It will be shown in the numerical experiments
that in the case when no special structure is present, Framework~2
is preferred since the application of the inverse operator does not
occur at every iteration. 
\end{enumerate}
\end{example}

\section{Minimization problems}
\label{sec:4}
We dedicate this section to the minimization setting of
Problem~\ref{prob:10}. Let us first formalize the connection
between Problem~\ref{prob:10} and Problem~\ref{prob:1}.

\begin{proposition}
\label{p:4}
In Problem~\ref{prob:1}, set $\mathsf{A}=\partial\mathsf{f}$ and 
$(\forall\kS\in\{1,\ldots,\mathsf{p}\})$
$\mathsf{B}_{\kS}=\partial\mathsf{g}_{\kS}$. Then every solution to
to \eqref{e:p1} solves Problem~\ref{prob:10}.
\end{proposition}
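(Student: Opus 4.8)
The plan is to work directly from the definition \eqref{e:subdiff} of the subdifferential and to establish the global variational inequality that characterizes a minimizer, without invoking any subdifferential sum rule that would require a constraint qualification. First I would unpack what it means for $\mathsf{x}\in\HS$ to solve \eqref{e:p1} under the identifications $\mathsf{A}=\partial\mathsf{f}$ and $\mathsf{B}_{\kS}=\partial\mathsf{g}_{\kS}$: by definition of the sum of set-valued operators, there exist $\mathsf{x}^*\in\partial\mathsf{f}(\mathsf{x})$ and, for every $\kS\in\{1,\ldots,\mathsf{p}\}$, a vector $\mathsf{y}_{\kS}^*\in\partial\mathsf{g}_{\kS}(\LS_{\kS}\mathsf{x})$ such that $\mathsf{x}^*+\sum_{\kS=1}^{\mathsf{p}}\LS_{\kS}^*\mathsf{y}_{\kS}^*=\mathsf{0}$.

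Next I would fix an arbitrary $\mathsf{z}\in\HS$ and invoke the subdifferential inequalities supplied by \eqref{e:subdiff}. Applying it to $\mathsf{f}$ at $\mathsf{x}$ yields $\mathsf{f}(\mathsf{z})\geq\mathsf{f}(\mathsf{x})+\scal{\mathsf{z}-\mathsf{x}}{\mathsf{x}^*}$, and applying it to each $\mathsf{g}_{\kS}$ at $\LS_{\kS}\mathsf{x}$, evaluated at the point $\LS_{\kS}\mathsf{z}$, yields $\mathsf{g}_{\kS}(\LS_{\kS}\mathsf{z})\geq\mathsf{g}_{\kS}(\LS_{\kS}\mathsf{x})+\scal{\LS_{\kS}\mathsf{z}-\LS_{\kS}\mathsf{x}}{\mathsf{y}_{\kS}^*}$. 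The crux is to move $\LS_{\kS}$ across the scalar product via the adjoint identity $\scal{\LS_{\kS}(\mathsf{z}-\mathsf{x})}{\mathsf{y}_{\kS}^*}=\scal{\mathsf{z}-\mathsf{x}}{\LS_{\kS}^*\mathsf{y}_{\kS}^*}$, so that every linear term carries the common factor $\mathsf{z}-\mathsf{x}$. Summing the inequality for $\mathsf{f}$ and the $\mathsf{p}$ inequalities for the $\mathsf{g}_{\kS}$ then gives
\[
\mathsf{f}(\mathsf{z})+\sum_{\kS=1}^{\mathsf{p}}\mathsf{g}_{\kS}(\LS_{\kS}\mathsf{z})\geq\mathsf{f}(\mathsf{x})+\sum_{\kS=1}^{\mathsf{p}}\mathsf{g}_{\kS}(\LS_{\kS}\mathsf{x})+\scal3{\mathsf{z}-\mathsf{x}}{\mathsf{x}^*+\sum_{\kS=1}^{\mathsf{p}}\LS_{\kS}^*\mathsf{y}_{\kS}^*}.
\]
Because $\mathsf{x}^*+\sum_{\kS=1}^{\mathsf{p}}\LS_{\kS}^*\mathsf{y}_{\kS}^*=\mathsf{0}$ forces the last scalar product to vanish, the inequality $\mathsf{f}(\mathsf{z})+\sum_{\kS}\mathsf{g}_{\kS}(\LS_{\kS}\mathsf{z})\geq\mathsf{f}(\mathsf{x})+\sum_{\kS}\mathsf{g}_{\kS}(\LS_{\kS}\mathsf{x})$ holds for every $\mathsf{z}\in\HS$, which is exactly the assertion that $\mathsf{x}$ solves Problem~\ref{prob:10}.

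I do not expect a genuine obstacle here: the computation uses only the defining inequality \eqref{e:subdiff} together with the adjoint relation, and it never appeals to the reverse subdifferential sum rule. The one point worth flagging is precisely that only the always-valid inclusion $\partial\mathsf{f}+\sum_{\kS}\LS_{\kS}^*\circ(\partial\mathsf{g}_{\kS})\circ\LS_{\kS}\subseteq\partial(\mathsf{f}+\sum_{\kS}\mathsf{g}_{\kS}\circ\LS_{\kS})$ is being used; since the proposition claims only that solutions of the inclusion are minimizers, no Attouch--Br\'ezis-type qualification condition relating $\dom\mathsf{f}$ to the operators $(\LS_{\kS})_{1\leq\kS\leq\mathsf{p}}$ is required, and none should be introduced.
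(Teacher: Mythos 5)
Your proof is correct, and it takes a more elementary, self-contained route than the paper's. The paper reduces the $\mathsf{p}$-block inclusion to the standard two-block composite setting: it stacks the linear operators into $\LLS\colon\mathsf{x}\mapsto(\LS_1\mathsf{x},\ldots,\LS_{\mathsf{p}}\mathsf{x})$, forms the separable function $\boldsymbol{\mathsf{g}}\colon\boldsymbol{\mathsf{y}}\mapsto\sum_{\kS=1}^{\mathsf{p}}\mathsf{g}_{\kS}(\mathsf{y}_{\kS})$ on $\GGS$, identifies $\zer(\partial\mathsf{f}+\sum_{\kS=1}^{\mathsf{p}}\LS_{\kS}^*\circ(\partial\mathsf{g}_{\kS})\circ\LS_{\kS})$ with $\zer(\partial\mathsf{f}+\LLS^*\circ(\partial\boldsymbol{\mathsf{g}})\circ\LLS)$ via \cite[Proposition~16.9]{Livre1}, and then quotes \cite[Proposition~27.5(i)]{Livre1} to place this zero set inside $\Argmin(\mathsf{f}+\boldsymbol{\mathsf{g}}\circ\LLS)$. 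You instead verify the claim from first principles: you unpack the inclusion \eqref{e:p1} into the existence of $\mathsf{x}^*\in\partial\mathsf{f}(\mathsf{x})$ and $\mathsf{y}_{\kS}^*\in\partial\mathsf{g}_{\kS}(\LS_{\kS}\mathsf{x})$ with $\mathsf{x}^*+\sum_{\kS=1}^{\mathsf{p}}\LS_{\kS}^*\mathsf{y}_{\kS}^*=\mathsf{0}$, and sum the defining inequalities \eqref{e:subdiff} after transporting each $\LS_{\kS}$ by adjunction. This is precisely the argument hidden inside the cited Proposition~27.5(i), so the two proofs share the same mathematical core --- the qualification-free inclusion of $\partial\mathsf{f}+\sum_{\kS}\LS_{\kS}^*\circ(\partial\mathsf{g}_{\kS})\circ\LS_{\kS}$ into $\partial(\mathsf{f}+\sum_{\kS}\mathsf{g}_{\kS}\circ\LS_{\kS})$ followed by Fermat's rule --- but your version is self-contained and makes the one-way nature of the argument explicit, while the paper's is shorter and delegates the work to the reference. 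Your closing remark that no Attouch--Br\'ezis-type qualification is needed is exactly right, and it is also why the paper can invoke Proposition~27.5(i) with no hypotheses beyond those of Problem~\ref{prob:10}. One small detail worth adding when you write this up: since $\partial\mathsf{f}(\mathsf{x})\neq\emp$ and $\partial\mathsf{g}_{\kS}(\LS_{\kS}\mathsf{x})\neq\emp$, the values $\mathsf{f}(\mathsf{x})$ and $\mathsf{g}_{\kS}(\LS_{\kS}\mathsf{x})$ are finite, so summing the extended-real-valued inequalities involves no $\infty-\infty$ ambiguity.
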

\begin{proof}
Set $\LLS\colon\HS\to\GGS\colon\mathsf{x}\mapsto(\LS_1\mathsf{x},
\ldots,\LS_{\mathsf{p}}\mathsf{x})$ and 
$\boldsymbol{\mathsf{g}}\colon\GGS\to\RX\colon
\boldsymbol{\mathsf{y}}\mapsto
\sum_{\kS=1}^{\mathsf{p}}\mathsf{g}_{\kS}(\mathsf{y}_{\kS})$.
Then $\LLS^*\colon\GGS\to\HS\colon\boldsymbol{\mathsf{y}}
\mapsto\sum_{\kS=1}^{\mathsf{p}}\LS_{\kS}^*\mathsf{y}_{\kS}$.
Hence, it follows from \cite[Proposition~16.9]{Livre1} that
\begin{equation}
\mathsf{x}\in\zer\brk3{\partial\mathsf{f}+
\sum_{\kS=1}^\mathsf{p}\LS^*_{\kS}\circ
(\partial\mathsf{g}_{\kS})\circ\LS_{\kS}}=
\zer\brk1{\partial\mathsf{f}+\LLS^*\circ
(\partial\boldsymbol{\mathsf{g}})\circ\LLS}.
\end{equation}
However, \cite[Proposition~27.5(i)]{Livre1} asserts that
\begin{equation}
\zer\brk3{\partial\mathsf{f}+\sum_{\kS=1}^\mathsf{p}\LLS^*\circ
(\partial\boldsymbol{\mathsf{g}})\circ\LLS}\subset
\Argmin\brk1{\mathsf{f}+\boldsymbol{\mathsf{g}}\circ\LLS}
=\Argmin\brk3{\mathsf{f}+\sum_{\kS=1}^\mathsf{p}
\mathsf{g}_{\kS}\circ\LS_{\kS}},
\end{equation}
which confirms the claim.
\end{proof}

Problem~\ref{prob:10} relies on the assumption that 
$\zer(\partial\mathsf{f}+\sum_{\kS=1}^\mathsf{p}\LS^*_{\kS}
\circ(\partial\mathsf{g}_{\kS})\circ\LS_{\kS})\neq\emp$. Let us
provide sufficient conditions that guarantee it.

\begin{proposition}
\label{p:6}
Let $\HS$ be a separable real Hilbert space and 
$\mathsf{f}\in\upGamma_0(\HS)$. For every
$\kS\in\{1,\ldots,\mathsf{p}\}$, let $\GS_{\kS}$ be a separable
real Hilbert space, let $\mathsf{g}_{\kS}\in\upGamma_0(\GS_{\kS})$,
and let $0\neq\LS_{\kS}\colon\HS\to\GS_{\kS}$ be linear and
bounded. Set
\begin{equation}
\boldsymbol{\mathsf{S}}=\menge{(\LS_{\kS}\mathsf{x}-
\mathsf{y}_{\kS})_{1\leq\kS\leq\mathsf{p}}}
{\mathsf{x}\in\dom\mathsf{f}\;\text{and}\;
(\forall\kS\in\{1,\ldots,\mathsf{p}\})
\;\:\mathsf{y}_{\kS}\in\dom\mathsf{g}_{\kS}}.
\end{equation}
Then $\zer(\partial\mathsf{f}+\sum_{\kS=1}^\mathsf{p}\LS^*_{\kS}
\circ(\partial\mathsf{g}_{\kS})\circ\LS_{\kS})\neq\emp$ if the
following hold:
\begin{enumerate}
\item
\label{p:6i}
$\mathsf{f}(\mathsf{x})+\sum_{\kS=1}^\mathsf{p}
\mathsf{g}_{\kS}(\LS_{\kS}\mathsf{x})\to\pinf$ as 
$\|\mathsf{x}\|\to\pinf$.
\item
\label{p:6ii}
Any of the following is satisfied:
\begin{enumerate}
\item
\label{p:6iia}
The cone generated by $\boldsymbol{\mathsf{S}}$ is a closed vector
subspace of \,$\GGS$.
\item
\label{p:6iib}
For every $\kS\in\{1,\ldots,\mathsf{p}\}$, $\mathsf{g}_{\kS}$ is
real-valued.
\item
\label{p:6iic}
$\HS$ and $(\GS_{\kS})_{1\leq\kS\leq\mathsf{p}}$ are
finite-dimensional, and there exists
$\mathsf{x}\in\reli\dom\mathsf{f}$ such that 
\begin{equation}
(\forall\kS\in\{1,\ldots,\mathsf{p}\})\quad\LS_{\kS}\mathsf{x}\in
\reli\dom\mathsf{g}_{\kS},
\end{equation}
where $\reli$ stands for the relative interior.
\end{enumerate}
\end{enumerate}
\end{proposition}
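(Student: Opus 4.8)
The plan is to identify the set $\zer(\partial\mathsf{f}+\sum_{\kS=1}^{\mathsf{p}}\LS_{\kS}^*\circ(\partial\mathsf{g}_{\kS})\circ\LS_{\kS})$ with the set of minimizers of the composite objective of \eqref{e:p10}, and then to produce such a minimizer from the coercivity hypothesis \ref{p:6i}. Mirroring the proof of Proposition~\ref{p:4}, I would set $\LLS\colon\HS\to\GGS\colon\mathsf{x}\mapsto(\LS_{\kS}\mathsf{x})_{1\leq\kS\leq\mathsf{p}}$ and $\boldsymbol{\mathsf{g}}\colon\GGS\to\RX\colon\boldsymbol{\mathsf{y}}\mapsto\sum_{\kS=1}^{\mathsf{p}}\mathsf{g}_{\kS}(\mathsf{y}_{\kS})$, so that $\boldsymbol{\mathsf{g}}\in\upGamma_0(\GGS)$, $\LLS^*\colon\boldsymbol{\mathsf{y}}\mapsto\sum_{\kS=1}^{\mathsf{p}}\LS_{\kS}^*\mathsf{y}_{\kS}$, and the objective reads $\Phi=\mathsf{f}+\boldsymbol{\mathsf{g}}\circ\LLS$. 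Since $\partial\boldsymbol{\mathsf{g}}$ decomposes as the product $\partial\mathsf{g}_1\times\cdots\times\partial\mathsf{g}_{\mathsf{p}}$ by \cite[Proposition~16.9]{Livre1}, the operator $\LLS^*\circ(\partial\boldsymbol{\mathsf{g}})\circ\LLS$ coincides with $\sum_{\kS=1}^{\mathsf{p}}\LS_{\kS}^*\circ(\partial\mathsf{g}_{\kS})\circ\LS_{\kS}$, and I note that $\boldsymbol{\mathsf{S}}=\LLS(\dom\mathsf{f})-\dom\boldsymbol{\mathsf{g}}$.

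The crux of the argument is the subdifferential identity
\begin{equation}
\partial\Phi=\partial\mathsf{f}+\LLS^*\circ(\partial\boldsymbol{\mathsf{g}})\circ\LLS,
\end{equation}
which is where hypothesis \ref{p:6ii} enters: each of \ref{p:6iia}--\ref{p:6iic} is one of the three classical constraint qualifications guaranteeing the combined sum-and-chain rule of convex subdifferential calculus \cite{Livre1}. Under \ref{p:6iib}, each $\mathsf{g}_{\kS}$, being a finite-valued lower semicontinuous convex function on a Hilbert space, is continuous, so $\boldsymbol{\mathsf{g}}\circ\LLS$ is finite and continuous and the identity follows from the continuity-based sum and chain rules. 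Under \ref{p:6iic}, relative interiors agree with strong relative interiors in finite dimension and $\reli\dom\boldsymbol{\mathsf{g}}=\reli\dom\mathsf{g}_1\times\cdots\times\reli\dom\mathsf{g}_{\mathsf{p}}$, so the stated condition is exactly the Rockafellar qualification that $\exi\mathsf{x}\in\reli\dom\mathsf{f}$ with $\LLS\mathsf{x}\in\reli\dom\boldsymbol{\mathsf{g}}$. Under \ref{p:6iia}, the requirement that $\operatorname{cone}\boldsymbol{\mathsf{S}}$ be a closed vector subspace is, for the convex set $\boldsymbol{\mathsf{S}}$, equivalent to $\boldsymbol{\mathsf{0}}\in\sri\boldsymbol{\mathsf{S}}$, that is, to the Attouch--Br\'ezis qualification $\boldsymbol{\mathsf{0}}\in\sri(\dom\boldsymbol{\mathsf{g}}-\LLS(\dom\mathsf{f}))$; in particular this forces $\boldsymbol{\mathsf{0}}\in\boldsymbol{\mathsf{S}}$, hence $\dom\Phi\neq\emp$. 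In the other two cases $\dom\Phi\neq\emp$ is immediate, being $\dom\mathsf{f}$ itself under \ref{p:6iib} and containing the qualifying point under \ref{p:6iic}. Thus $\Phi\in\upGamma_0(\HS)$ and the displayed identity holds.

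With the identity in hand, Fermat's rule \cite{Livre1} gives $\Argmin\Phi=\zer\partial\Phi=\zer(\partial\mathsf{f}+\sum_{\kS=1}^{\mathsf{p}}\LS_{\kS}^*\circ(\partial\mathsf{g}_{\kS})\circ\LS_{\kS})$, so it remains only to show $\Argmin\Phi\neq\emp$. This is where \ref{p:6i} is used: it states precisely that $\Phi(\mathsf{x})\to\pinf$ as $\|\mathsf{x}\|\to\pinf$, i.e. that $\Phi$ is coercive, and a coercive function in $\upGamma_0(\HS)$ attains its infimum on the (reflexive) Hilbert space $\HS$ \cite{Livre1}. Hence $\Argmin\Phi\neq\emp$ and the conclusion follows.

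I expect the main obstacle to be the verification in the second paragraph that the three hypotheses really do suffice for the single identity $\partial\Phi=\partial\mathsf{f}+\LLS^*\circ(\partial\boldsymbol{\mathsf{g}})\circ\LLS$: one must splice a chain rule through $\LLS$ with a sum rule against $\mathsf{f}$, and the qualification has to be phrased in terms of $\boldsymbol{\mathsf{S}}=\LLS(\dom\mathsf{f})-\dom\boldsymbol{\mathsf{g}}$ rather than the range of $\LLS$ alone. Establishing that $\operatorname{cone}\boldsymbol{\mathsf{S}}$ being a closed subspace yields both $\boldsymbol{\mathsf{0}}\in\boldsymbol{\mathsf{S}}$ and the $\sri$ qualification (via a short convexity argument producing $\boldsymbol{\mathsf{0}}$ as a convex combination of a point of $\boldsymbol{\mathsf{S}}$ and its negative multiple) is the most delicate point; the coercivity-to-existence step and the continuous and finite-dimensional cases are routine appeals to \cite{Livre1}.
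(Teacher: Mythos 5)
Your proof is correct and follows essentially the same route as the paper: the same product-space reformulation via $\LLS$ and $\boldsymbol{\mathsf{g}}$, with hypothesis \ref{p:6ii} used to get $\boldsymbol{\mathsf{0}}\in\boldsymbol{\mathsf{S}}$ (so the objective is in $\upGamma_0(\HS)$) and the identity $\Argmin(\mathsf{f}+\boldsymbol{\mathsf{g}}\circ\LLS)=\zer\brk1{\partial\mathsf{f}+\LLS^*\circ(\partial\boldsymbol{\mathsf{g}})\circ\LLS}$, and hypothesis \ref{p:6i} supplying coercivity and hence a minimizer. The only difference is one of delegation: the paper cites \cite[Proposition~27.5(iii)]{Livre1} for the Argmin--zero-set identity and \cite[Proposition~11.15(i)]{Livre1} for existence, whereas you re-derive the former by combining Fermat's rule with the sum-and-chain subdifferential calculus under each of the three constraint qualifications.
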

\begin{proof}
Set $\LLS\colon\HS\to\GGS\colon\mathsf{x}\mapsto(\LS_1\mathsf{x},
\ldots,\LS_{\mathsf{p}}\mathsf{x})$ and 
$\boldsymbol{\mathsf{g}}\colon\GGS\to\RX
\colon\boldsymbol{\mathsf{y}}\mapsto
\sum_{\kS=1}^{\mathsf{p}}\mathsf{g}_{\kS}(\mathsf{y}_{\kS})$. Then 
$\LLS$ is linear and bounded, 
$\boldsymbol{\mathsf{g}}\in\upGamma_0(\GGS)$, 
$\boldsymbol{\mathsf{S}}=
\menge{\LLS\mathsf{x}-\boldsymbol{\mathsf{y}}}
{\mathsf{x}\in\dom\mathsf{f}\;\text{and}\;
\boldsymbol{\mathsf{y}}\in\dom\boldsymbol{\mathsf{g}}}$, and 
$\mathsf{f}+\boldsymbol{\mathsf{g}}\circ\LLS=
\mathsf{f}+\sum_{\kS=1}^\mathsf{p}\mathsf{g}_{\kS}\circ\LS_{\kS}$.
On the other hand, it follows from \ref{p:6ii} that
$\boldsymbol{\mathsf{0}}\in\boldsymbol{\mathsf{S}}$, which implies
that $\dom(\mathsf{f}+\boldsymbol{\mathsf{g}}\circ\LLS)\neq\emp$. 
Thus, because $\mathsf{f}+\boldsymbol{\mathsf{g}}\circ\LLS$ is also
lower semicontinuous and convex, we have
$\mathsf{f}+\boldsymbol{\mathsf{g}}\circ\LLS\in\upGamma_0(\HS)$. 
Hence, since \ref{p:6i} states that 
$\mathsf{f}(\mathsf{x})+\boldsymbol{\mathsf{g}}(\LLS\mathsf{x})
\to\pinf$ as $\|\mathsf{x}\|\to\pinf$, it follows from 
\cite[Proposition~11.15(i)]{Livre1} that 
\begin{equation}
\label{e:t6}
\Argmin(\mathsf{f}+\boldsymbol{\mathsf{g}}\circ\LLS)\neq\emp.
\end{equation}
However, \ref{p:6ii} and \cite[Proposition~27.5(iii)]{Livre1} 
guarantee that
\begin{equation}
\Argmin(\mathsf{f}+\boldsymbol{\mathsf{g}}\circ\LLS)=
\zer\brk1{\partial\mathsf{f}+\LLS^*
\circ(\partial\boldsymbol{\mathsf{g}})\circ\LLS}, 
\end{equation}
which completes the proof.
\end{proof}

In view of Proposition~\ref{p:4} and \eqref{e:dprox}, we obtain the
following solution methods for Problem~\ref{prob:10}.

\begin{corollary}
\label{c:1}
Consider the setting of Problem~\ref{prob:10} and set
$\mathsf{F}=\Argmin(\mathsf{f}+\sum_{\kS=1}^\mathsf{p}
\mathsf{g}_{\kS}\circ\LS_{\kS})$. In \eqref{e:algo1},
\eqref{e:algo2}, and \eqref{e:algo3}, replace the resolvent
operators $(\mathsf{J}_{\upgamma\mathsf{A}},
\mathsf{J}_{\upgamma\mathsf{B}_1},\ldots,
\mathsf{J}_{\upgamma\mathsf{B}_{\mathsf{p}}})$ by the proximity
operators $(\prox_{\upgamma\mathsf{f}},
\prox_{\upgamma\mathsf{g}_1},\ldots,
\prox_{\upgamma\mathsf{g}_{\mathsf{p}}})$.
Then Propositions~\ref{p:3}, \ref{p:21}, and
\ref{p:2} provide sequences $(x_{1,\nS})_{\nnn}$ which
converges weakly $\Pas$ to an $\mathsf{F}$-valued random variable.
\end{corollary}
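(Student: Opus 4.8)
The plan is to recognize each of the three ``proximal'' algorithms as a verbatim instance of the monotone-inclusion algorithms of Propositions~\ref{p:3}, \ref{p:21}, and \ref{p:2} applied to the inclusion obtained by subdifferentiation, and then to transfer the convergence conclusion from the zero set of that inclusion to $\mathsf{F}$ via Proposition~\ref{p:4}. First I would place myself in the setting of Proposition~\ref{p:4}: in Problem~\ref{prob:1} set $\mathsf{A}=\partial\mathsf{f}$ and, for every $\kS\in\{1,\ldots,\mathsf{p}\}$, $\mathsf{B}_{\kS}=\partial\mathsf{g}_{\kS}$. Since $\mathsf{f}\in\upGamma_0(\HS)$ and $\mathsf{g}_{\kS}\in\upGamma_0(\GS_{\kS})$, the operators $\partial\mathsf{f}$ and $\partial\mathsf{g}_{\kS}$ are maximally monotone, as recalled in Section~\ref{sec:21} around \eqref{e:subdiff}, so this choice is admissible in Problem~\ref{prob:1}. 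Its zero set is then exactly $\ZS=\zer(\partial\mathsf{f}+\sum_{\kS=1}^{\mathsf{p}}\LS_{\kS}^*\circ(\partial\mathsf{g}_{\kS})\circ\LS_{\kS})$, which is nonempty by the standing hypothesis of Problem~\ref{prob:10}.

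The key identity is the resolvent/proximity correspondence. From \eqref{e:dprox} we have $\prox_{\mathsf{f}}=\mathsf{J}_{\partial\mathsf{f}}$, and since $\upgamma\in\RPP$ the scalar-multiplication rule $\partial(\upgamma\mathsf{f})=\upgamma\partial\mathsf{f}$ yields $\prox_{\upgamma\mathsf{f}}=\mathsf{J}_{\partial(\upgamma\mathsf{f})}=\mathsf{J}_{\upgamma\partial\mathsf{f}}=\mathsf{J}_{\upgamma\mathsf{A}}$; the same reasoning gives $\prox_{\upgamma\mathsf{g}_{\kS}}=\mathsf{J}_{\upgamma\mathsf{B}_{\kS}}$ for every $\kS\in\{1,\ldots,\mathsf{p}\}$. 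Consequently, the recursions in which the resolvents $(\mathsf{J}_{\upgamma\mathsf{A}},\mathsf{J}_{\upgamma\mathsf{B}_1},\ldots,\mathsf{J}_{\upgamma\mathsf{B}_{\mathsf{p}}})$ are replaced by $(\prox_{\upgamma\mathsf{f}},\prox_{\upgamma\mathsf{g}_1},\ldots,\prox_{\upgamma\mathsf{g}_{\mathsf{p}}})$ coincide, term for term, with \eqref{e:algo1}, \eqref{e:algo2}, and \eqref{e:algo3} for the above choice of $(\mathsf{A},\mathsf{B}_1,\ldots,\mathsf{B}_{\mathsf{p}})$.

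With the summability, independence, and activation hypotheses of Corollary~\ref{c:1} carried over unchanged, Propositions~\ref{p:3}, \ref{p:21}, and \ref{p:2} apply directly and give, in each case, that $(x_{1,\nS})_{\nnn}$ converges weakly $\Pas$ to a $\ZS$-valued random variable. To reach the stated conclusion I would then invoke Proposition~\ref{p:4}, which asserts that every solution to \eqref{e:p1} solves Problem~\ref{prob:10}, that is, $\ZS\subset\mathsf{F}$. Hence the $\ZS$-valued weak limit produced by each proposition takes its values in $\mathsf{F}$, so it is a fortiori an $\mathsf{F}$-valued random variable, which is precisely the claim.

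I do not anticipate a genuine obstacle here: the argument is a clean specialization rather than a new estimate. The only points requiring care are the scalar rule $\partial(\upgamma\mathsf{f})=\upgamma\partial\mathsf{f}$ that underlies the resolvent identity, and the observation that no additional measurability argument is needed, since the limit is already a measurable $\ZS$-valued random variable (cf.\ Remark~\ref{r:0}) and the inclusion $\ZS\subset\mathsf{F}$ merely restricts the set in which its values lie.
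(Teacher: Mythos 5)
Your proposal is correct and follows exactly the route the paper intends: the paper derives Corollary~\ref{c:1} directly from Proposition~\ref{p:4} (the specialization $\mathsf{A}=\partial\mathsf{f}$, $\mathsf{B}_{\kS}=\partial\mathsf{g}_{\kS}$, giving $\ZS\subset\mathsf{F}$) together with the identity $\prox_{\upgamma\mathsf{f}}=\mathsf{J}_{\upgamma\partial\mathsf{f}}$ from \eqref{e:dprox}, which is precisely your argument. Your additional remarks on $\partial(\upgamma\mathsf{f})=\upgamma\partial\mathsf{f}$ and on measurability of the limit are accurate and require no further justification.
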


\section{Numerical experiments}
\label{sec:5}

\subsection{Preamble}
\label{sec:50}

We present four experiments to illustrate the numerical behavior of
the three algorithmic frameworks presented in Section~\ref{sec:3}.
These algorithms are initialized by setting $\boldsymbol{x}_0$,
$\boldsymbol{z}_0$, $\boldsymbol{y}_0$, and $\boldsymbol{w}_0$ to
$\boldsymbol{\mathsf{0}}$, and they use the proximal parameter
$\upgamma=1.0$ and the relaxation strategy
$(\forall\nnn)$ $\uplambda_\nS=1.9$. 
The random variable $\boldsymbol{\varepsilon}_0$ activates
operator indices in $\{1,\dots,\mathsf{p}+1\}$ (Framework~1), 
$\{1,\dots,\mathsf{p}+2\}$ (Framework~2), and
$\{1,\dots,2\mathsf{p}+1\}$ (Framework~3 using 
Example~\ref{ex:11}), with a uniform distribution.

We also provide comparisons
with the existing methods of Section~\ref{sec:22} when applicable,
because they do provide almost sure iterate convergence to a
solution, although they do not satisfy the requirements
{\bfseries R2}--{\bfseries R3}:
\begin{itemize}
\item
Algorithm~\eqref{e:72} is initialized with ${x}_{1,0}=\mathsf{0}$
and $\boldsymbol{y}_0=\boldsymbol{\mathsf{0}}$. Further, 
for every $\kS\in\{1,\dots,\mathsf{p}\}$, 
$\uppi_{\kS}=1/\mathsf{p}$ and, to enforce \eqref{e:CHcond}, we set
$\tau_0=0.9/\sqrt{\mathsf{p}}$ and 
$\sigma_0=1/\brk1{\sqrt{\mathsf{p}}
\max_{1\leq\kS\leq\mathsf{p}}\|\LS_{\kS}\|^2}$. In addition we set
$\chi_0=0.5$, $\upeta=0.5$, and $\updelta=1.5$. We recall that
Algorithm~\eqref{e:72} can activate only one operator at each
iteration and does not satisfy {\bfseries R2}--{\bfseries R4}.
\item
Algorithm~\eqref{e:71} is initialized with 
${x}_{1,0}=\mathsf{0}$ and 
$\boldsymbol{v}_0=\boldsymbol{\mathsf{0}}$. Further, 
$\mathsf{W}=0.9\uptau\Id$ and, for every
$\kS\in\{1,\dots,\mathsf{p}\}$, 
$\mathsf{U}_{\kS}=(\uptau/\|\LS_{\kS}\|^2)\Id$,
where $\uplambda_{\nS}\equiv1$ and, to enforce \eqref{e:PRcond},
$\uptau=1/\sqrt{2\mathsf{p}}$. We recall that
Algorithm~\eqref{e:71} does not satisfy 
{\bfseries R2}--{\bfseries R3}.
\end{itemize}
These parameters were found to enhance the performance of these
two algorithms. The first three experiments
(Sections~\ref{sec:51}--\ref{sec:53}) correspond to minimization
problems fitting the format of Problem~\ref{prob:10}. The last
experiment (Section~\ref{sec:54}) is a non-minimization problem
that fits the format of Problem~\ref{prob:1}, and 
Algorithm~\eqref{e:72} is therefore not applicable. 

\subsection{Signal restoration}
\label{sec:51}

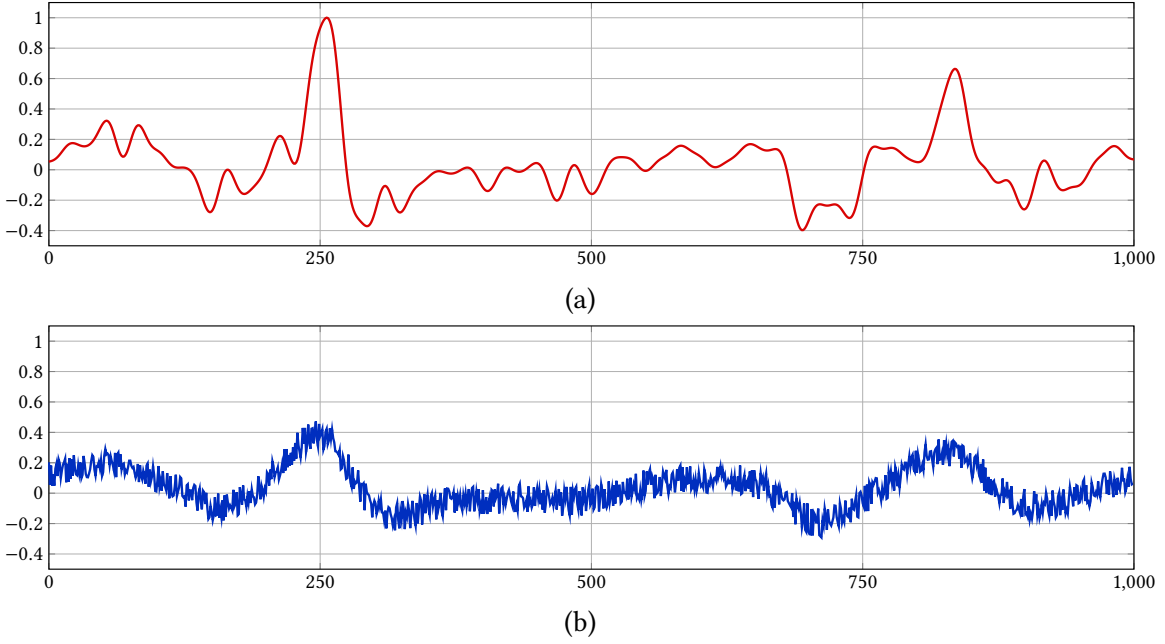
\begin{figure}[b!]
\centering
\begin{tikzpicture}[scale=0.545]
\definecolor{darkgray176}{RGB}{176,176,176}
\begin{axis}[height=7.5cm,width=1.818\columnwidth, legend 
columns=1 
cell align={left}, xmin=0, xmax=1000, ymin=-0.5, ymax=1.1,
xtick distance=100,
ytick distance=0.2,
x grid style={darkgray176},xmajorgrids,
y grid style={darkgray176},ymajorgrids,
tick label style={font=\Large}, 
legend cell align={left},
legend style={at={(0.02,0.15)},anchor=west},
axis line style=thick,]
\addplot [ultra thick, dred]
table {figures/ex1/original.txt};
\end{axis}
\end{tikzpicture}\\
(a)\\[0.3em]
\begin{tikzpicture}[scale=0.545]
\definecolor{darkgray176}{RGB}{176,176,176}
\begin{axis}[height=7.5cm,width=1.818\columnwidth, legend 
columns=1 
cell align={left}, xmin=0, xmax=1000, ymin=-0.5, ymax=1.1,
xtick distance=100,
ytick distance=0.2,
x grid style={darkgray176},xmajorgrids,
y grid style={darkgray176},ymajorgrids,
tick label style={font=\Large}, 
legend cell align={left},
legend style={at={(0.02,0.15)},anchor=west},
axis line style=thick,]
\addplot [ultra thick, dblue]
table {figures/ex1/err1.txt};
\end{axis}
\end{tikzpicture}\\
(b)\\[0.3em]
\caption{Experiment of Section~\ref{sec:51}.
(a): Original signal $\overline{\mathsf{x}}$.
(b): Noisy observation $\mathsf{r}_1$.}
\label{fig:sign}
\end{figure}

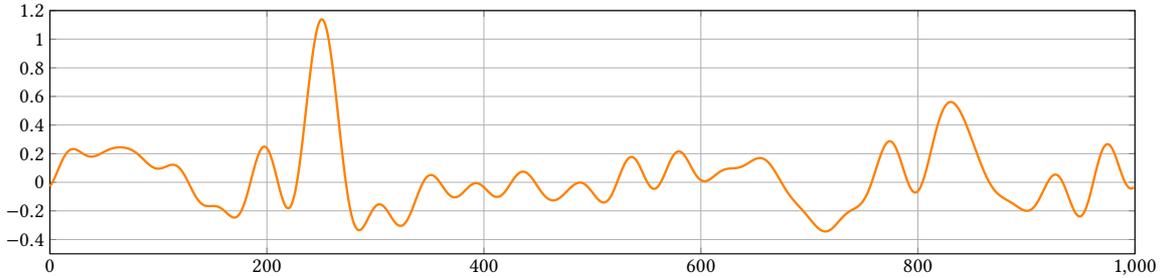
\begin{figure}[t!]
\begin{tikzpicture}[scale=0.545]
\definecolor{darkgray176}{RGB}{176,176,176}
\begin{axis}[height=7.5cm,width=1.818\columnwidth, 
legend columns=1 
cell align={left}, xmin=0, xmax=1000, ymin=-0.5, ymax=1.2,
xtick distance=100,
ytick distance=0.2,
x grid style={darkgray176},xmajorgrids,
y grid style={darkgray176},ymajorgrids,
tick label style={font=\Large}, 
legend cell align={left},
legend style={at={(0.02,0.15)},anchor=west},
axis line style=thick,]
\addplot [ultra thick, orange]
table {figures/ex1/sol2b.txt};
\end{axis}
\end{tikzpicture}
\caption{Experiment of Section~\ref{sec:51}. Solution
produced by Framework~2.}
\label{fig:sol2}
\end{figure}

\begin{figure}[b!]
\centering
\begin{tabular}{c@{}c@{}}
\begin{tikzpicture}[scale=0.545]
\definecolor{darkgray176}{RGB}{176,176,176}
\begin{axis}[height=8cm,width=13cm, legend columns=1 
cell align={left}, xmin=0, xmax=1000, ymin=-100, ymax=0.00,
xtick distance=250,
ytick distance=25,
x grid style={darkgray176},xmajorgrids,
y grid style={darkgray176},ymajorgrids,
tick label style={font=\Large}, 
legend cell align={left},
legend style={at={(0.02,0.15)},anchor=west},
axis line style=thick,]
\addplot [ultra thick, ngreen]
table {figures/ex1/F1b1.txt};
\addplot [ultra thick, orng]
table {figures/ex1/F2b1.txt};
\addplot [ultra thick, dblue]
table {figures/ex1/F3b1.txt};
\addplot [ultra thick, dashed, dviolet]
table {figures/ex1/CHb1.txt};
\addplot [ultra thick, dashed, dred]
table {figures/ex1/RNb1.txt};
\end{axis}
\end{tikzpicture}&
\begin{tikzpicture}[scale=0.545]
\definecolor{darkgray176}{RGB}{176,176,176}
\begin{axis}[height=8cm,width=13cm, legend columns=1 
cell align={left}, xmin=0, xmax=250, ymin=-100, ymax=0.00,
xtick distance=50,
ytick distance=25,
x grid style={darkgray176},xmajorgrids,
y grid style={darkgray176},ymajorgrids,
tick label style={font=\Large}, 
legend cell align={left},
legend style={at={(0.02,0.15)},anchor=west},
axis line style=thick,]
\addplot [ultra thick, ngreen]
table {figures/ex1/F1b8.txt};
\addplot [ultra thick, orng]
table {figures/ex1/F2b8.txt};
\addplot [ultra thick, dblue]
table {figures/ex1/F3b8.txt};
%\addplot [ultra thick, dashed, dviolet]
%table {figures/ex1/CHb1.txt};
\addplot [ultra thick, dashed, dred]
table {figures/ex1/RNb8.txt};
\end{axis}
\end{tikzpicture}\\
\small{(a)}&\small{(b)}
\end{tabular}
\caption{Experiment of Section~\ref{sec:51}.
Normalized error $20\log_{10}(\|x_{1,\nS}-x_\infty\|/
\|x_{1,0}-x_\infty\|)$ (dB) versus execution 
time (s).
(a): Block size $1$ with $1$ core. 
(b): Block size $8$ with $8$ cores. 
{\color{ngreen} Green}: Framework~1.
{\color{orng} Orange}: Framework~2. 
{\color{dblue} Blue}: Framework~3
with Example~\ref{ex:11}.
{\color{dviolet} Dashed violet}: Algorithm~\eqref{e:72}.
{\color{dred} Dashed red}: Algorithm~\eqref{e:71}.
}
\label{fig:ex3.2}
\end{figure}
The goal is to recover the original signal 
$\overline{\mathsf{x}}\in\HS=\RR^\mathsf{N}$ ($\mathsf{N}=1000$) 
shown in Figure~\ref{fig:sign}(a) from $\mathsf{M}=10$ noisy
observations 
$(\mathsf{r}_{\lS})_{1\leq\lS\leq\mathsf{M}}$ given by
\begin{equation}
\label{e:exp1}
(\forall\lS\in\{1,\dots,\mathsf{M}\})\quad
\mathsf{r}_{\lS}=\LS_{\lS}\overline{\mathsf{x}}+\mathsf{w}_{\lS}
\end{equation}
where, for every
$\lS\in\{1,\dots,\mathsf{M}\}$,
$\LS_{\lS}\colon\RR^\mathsf{N}\to\RR^\mathsf{N}$ is a known linear 
operator, $\upeta_{\lS}\in\RPP$, and 
$\mathsf{w}_{\lS}\in
\left[-\upeta_{\lS},\upeta_{\lS}\right]^\mathsf{N}$ is the
realization of a bounded random noise vector. The parameters
$(\upeta_{\lS})_{1\leq\lS\leq\mathsf{M}}\in\RPP^\mathsf{M}$ are not
known exactly and underestimated by 
$(\upxi_{\lS})_{1\leq\lS\leq\mathsf{M}}\in\RPP^\mathsf{M}$. 
For every $\lS\in\{1,\dots,\mathsf{M}\}$, $\LS_{\lS}$ 
is a Gaussian convolution filter with zero mean and standard
deviation taken uniformly in $[20,40]$, $\upeta_{\lS}=0.1$,
$\mathsf{w}_{\lS}$ is taken uniformly
in $\left[-\upeta_{\lS},\upeta_{\lS}\right]^\mathsf{N}$, and
$\upxi_{\lS}=0.07$.
Set, for every $\lS\in\{1,\dots,\mathsf{M}\}$ and every
$\jS\in\{1,\dots,\mathsf{N}\}$, $\mathsf{Z}_{\lS,\jS}=
[\scal{\mathsf{r}_{\lS}}{\mathsf{e}_{\jS}}-\upxi_{\lS},
\scal{\mathsf{r}_{\lS}}{\mathsf{e}_{\jS}}+\upxi_{\lS}]$. Since the
intersection of these sets is empty, we cannot recover the signal
by solving the associated convex feasibility problem. Instead, our
objective is to solve an instantiation of Problem~\ref{prob:10}
with $\mathsf{p}=\mathsf{MN}$, to wit, 
\begin{equation}
\label{e:exp3.3}
\minimize{\mathsf{x}\in\RR^\mathsf{N}}
{{\upalpha}\|\mathsf{x}\|+
\sum_{\lS=1}^\mathsf{M}\sum_{\jS=1}^\mathsf{N}
\mathsf{d}_{\mathsf{Z}_{\lS,\jS}}\big(\scal{\LS_{\lS}\mathsf{x}}
{\mathsf{e}_{\jS}}\big)},
\end{equation}
where $\upalpha=0.05$. 
Since, for every $\mathsf{x}\in\RR^{\mathsf{N}}$, 
$\upalpha\|\mathsf{x}\|
\leq\upalpha\|\mathsf{x}\|+
\sum_{\lS=1}^\mathsf{M}\sum_{\jS=1}^\mathsf{N}
\mathsf{d}_{\mathsf{Z}_{\lS,\jS}}\big(\scal{\LS_{\lS}\mathsf{x}}
{\mathsf{e}_{\jS}}\big)$, condition \ref{p:6i} in 
Proposition~\ref{p:6} holds. 
In addition, for every $\lS\in\{1,\dots,\mathsf{M}\}$ and every
$\jS\in\{1,\dots,\mathsf{N}\}$, $\mathsf{d}_{\ZS_{\lS,\jS}}$
is real-valued. Hence, condition \ref{p:6iib} in 
Proposition~\ref{p:6} holds as well, which confirms that 
\eqref{e:exp3.3} is an instance of Problem~\ref{prob:10}. We can
thus invoke Corollary~\ref{c:1}. 
The three frameworks of 
Sections~\ref{sec:f1}--\ref{sec:f3} are used to solve
\eqref{e:exp3.3}, where the operator $\boldsymbol{\mathsf{E}}$ in
Proposition~\ref{p:2} is that of Example~\ref{ex:11}. 
Two experiments are conducted: the random variable 
$\boldsymbol{\varepsilon}_0$ produces (a) 1 activation with 1
core; and (b) 8 activations with 8 cores.
Given $\upgamma\in\RPP$, the operators 
$(\prox_{\upgamma\mathsf{d}_{\mathsf{Z}_{\lS,\jS}}})_{1\leq
\lS\leq\mathsf{M},1\leq\jS\leq\mathsf{N}}$ are computed
via \cite[Example~24.28]{Livre1} and $\prox_{\upgamma\|\cdot\|}$ 
via \cite[Example~24.20]{Livre1}. Furthermore, the convolutions are
and the inversions of linear operators are implemented using the 
fast Fourier transform \cite{Andr77}; see 
Example~\ref{ex:inv}\ref{ex:invii}.
As mentioned in Section~\ref{sec:50}, we also compare with:
\begin{itemize}
\item
Algorithm~\eqref{e:72}, which can activate only one operator at
each iteration.
\item
Algorithm~\eqref{e:71}, where the random variable 
$\boldsymbol{\varepsilon}_0$ activates (a) 1;  and (b) 8 indices in
$\{1,\dots,\mathsf{p}\}$ with a uniform distribution at each
iteration.
\end{itemize} 
The solution produced by Framework~2 is shown in
Figure~\ref{fig:sol2}. We display in Figure~\ref{fig:ex3.2} the
normalized error versus execution time. 

\subsection{Overlapping group lasso regression}
\label{sec:52}

We address the overlapping group lasso regression problem of 
\cite{Yuyl13}. Here $\HS=\RR^{\mathsf{N}}$ and $\mathsf{q}$ groups 
of indices $(\mathsf{I}_{\kS})_{1\leq\kS\leq\mathsf{q}}$ in
$\{1,\dots,\mathsf{N}\}$ are present, with
$\bigcup_{\kS=1}^{\mathsf{q}}\mathsf{I}_{\kS}
=\{1,\dots,\mathsf{N}\}$. In addition, for every
$\kS\in\{1,\dots,\mathsf{q}\}$, 
\begin{equation}
\VS_{\kS}\colon\RR^{\mathsf{N}}
\to\RR^{\card\,\mathsf{I}_{\kS}}\colon
\mathsf{x}=(\upxi_{\jS})_{1\leq\jS\leq\mathsf{N}}\mapsto
(\upxi_{\jS})_{\jS\in\mathsf{I}_{\kS}}.
\end{equation}
The goal is to
\begin{equation}
\label{e:gL}
\minimize{\mathsf{x}\in\RR^{\mathsf{N}}}{\frac{\upalpha}{2}
\|\mathsf{A}\mathsf{x}-\mathsf{b}\|^2+\dfrac{1}{\mathsf{q}}
\sum_{\kS=1}^{\mathsf{q}}\|\VS_{\kS}\mathsf{x}\|},
\end{equation}
where $\mathsf{A}\in\RR^{\mathsf{M}\times\mathsf{N}}$,
$\mathsf{b}=(\upbeta_{\lS})_{1\leq\lS\leq\mathsf{M}}\in
\RR^\mathsf{M}$, and $\upalpha\in\RPP$. 
In the experiment, $\mathsf{M}=1200$, $\mathsf{N}=3610$,
$\mathsf{q}=40$, and, as in \cite{Yuyl13},
$\upalpha=5/\mathsf{q}^2$. The entries of $\mathsf{A}$ are i.i.d.
samples from a $\mathcal{N}(1,10)$ distribution. The entries of the
reference vector $\bar{\mathsf{x}}\in\RR^\mathsf{N}$ are
i.i.d. samples from a uniform distribution on $[0,10]$, and
$\mathsf{b}=\mathsf{A}\bar{\mathsf{x}}+\mathsf{w}$, where
$\mathsf{w}\in\RR^\mathsf{M}$ has entries that are i.i.d. samples
from a $\mathcal{N}(0,0.1)$ distribution. We split the term
$\|\mathsf{A}\mathsf{x}-\mathsf{b}\|^2$ into a sum of $30$ blocks 
of $40$ entries each. Finally, the groups are defined by
\begin{align}
(\forall\kS\in\{1,\dots,\mathsf{q}\})\;\;
\mathsf{I}_{\kS}=\{90\kS-89,\ldots,90\kS+10\}.
\end{align}
Let $(\mathsf{a}_{\lS})_{1\leq\lS\leq\mathsf{M}}$ be the rows of
$\mathsf{A}$. Then \eqref{e:gL} is equivalent to  
\begin{equation}
\label{e:gL2}
\minimize{\mathsf{x}\in\RR^{\mathsf{N}}}{\sum_{\kS=1}^{\mathsf{p}}
\mathsf{g}_{\kS}\bigl(\mathsf{L}_{\kS}\mathsf{x}\bigr)},
\end{equation}
where $\mathsf{p}=70$,
\begin{equation}
\label{e:gL22}
(\forall\kS\in\{1,\dots,30\})\;\;
\begin{cases}
\mathsf{L}_{\kS}\colon\RR^{\mathsf{N}}\to\RR^{40}\colon\mathsf{x}
\mapsto\bigl(\scal{\mathsf{x}}{\mathsf{a}_{\lS}}
\bigr)_{40(\kS-1)+1\leq\lS\leq40\kS}\\
\mathsf{g}_{\kS}\colon\RR^{40}\to\RR\colon\mathsf{y}\mapsto
\Frac{\upalpha}{2}\|\mathsf{y}
-(\upbeta_{\lS})_{40(\kS-1)+1\leq\lS\leq40\kS}\|^2,
\end{cases}
\end{equation}
and
\begin{equation}
(\forall\kS\in\{31,\dots,70\})\;\;
\begin{cases}
\mathsf{L}_{\kS}\colon\RR^{\mathsf{N}}\to\RR^{100}\colon\mathsf{x}
\mapsto \VS_{\kS-30}\mathsf{x}\\
\mathsf{g}_{\kS}\colon\RR^{100}\to\RR\colon\mathsf{y}\mapsto
\Frac{1}{\mathsf{q}}\|\mathsf{y}\|.
\end{cases}
\end{equation}
Let $\mathsf{x}=(\upxi_{\lS})_{1\leq\lS\leq\mathsf{N}}\in
\RR^{\mathsf{N}}$ and $\jS\in\{1,\dots,\mathsf{N}\}$. Since 
$\bigcup_{\kS=1}^{\mathsf{q}}\mathsf{I}_{\kS}
=\{1,\dots,\mathsf{N}\}$,
\begin{equation}
\label{e:b1}
\frac{1}{\mathsf{q}}\sum_{\kS=1}^{\mathsf{q}}
\|\VS_{\kS}\mathsf{x}\|
=\frac{1}{\mathsf{q}}\sum_{\kS=1}^{\mathsf{q}}
\|(\upxi_{\lS})_{\lS\in\mathsf{I}_{\kS}}\|
=\frac{1}{\mathsf{q}}\sum_{\kS=1}^{\mathsf{q}}
\sqrt{\sum_{\lS\in\mathsf{I}_{\kS}} |\upxi_{\lS}|^2}
\geq\frac{1}{\mathsf{q}}|\upxi_{\jS}|.
\end{equation}
In turn, 
\begin{equation}
\label{e:b2}
\sum_{\kS=1}^{70}
\mathsf{g}_{\kS}\bigl(\mathsf{L}_{\kS}\mathsf{x}\bigr)=
\|\mathsf{A}\mathsf{x}-\mathsf{b}\|^2+\dfrac{1}{\mathsf{q}}
\sum_{\kS=1}^{\mathsf{q}}\|\VS_{\kS}\mathsf{x}\|
\geq\frac{1}{\mathsf{q}\mathsf{N}}
\sum_{\jS=1}^{\mathsf{N}}|\upxi_{\jS}|
\geq\frac{1}{\mathsf{q}\mathsf{N}}
\sqrt{\sum_{\jS=1}^{\mathsf{N}}|\upxi_{\jS}|^2}
=\frac{1}{\mathsf{q}\mathsf{N}}\|\mathsf{x}\|,
\end{equation}
which ensures that condition \ref{p:6i} in 
Proposition~\ref{p:6} holds. In addition, for every 
$\kS\in\{1,\dots,70\}$, $\mathsf{g}_{\kS}$ is real-valued. Hence,
condition \ref{p:6iib} in Proposition~\ref{p:6} holds as well.
Therefore Proposition~\ref{p:6} guarantees that \eqref{e:gL2} is an
instance of Problem~\ref{prob:10} and we invoke Corollary~\ref{c:1}
to justify the convergence of the algorithms. We employ the three
frameworks of Sections~\ref{sec:f1}--\ref{sec:f3} to solve
\eqref{e:gL}, where the operator $\boldsymbol{\mathsf{E}}$ in
Proposition~\ref{p:2} is that defined in Example~\ref{ex:11}.
Two experiments are conducted: the
random variable $\boldsymbol{\varepsilon}_0$ produces (a) 1
activation with 1 core; and (b) 8 activations with 8 cores.
Given $\upgamma\in\RPP$ and $\mathsf{z}\in\RR^{40}$, we compute the
$\prox_{\upgamma\|\cdot\|}$ via \cite[Example~24.20]{Livre1},
$\prox_{\upgamma\|\cdot-\mathsf{z}\|^2}$ via
\cite[Proposition~24.8(i)]{Livre1}, and the inverse operators
are computed by solving the linear
systems with Example~\ref{ex:inv}\ref{ex:inviii}.
We also compare with:
\begin{itemize}
\item
Algorithm~\eqref{e:72}.
\item
Algorithm~\eqref{e:71}, where the random variable 
$\boldsymbol{\varepsilon}_0$ activates (a) 1;  and (b) 8 indices in
$\{1,\dots,\mathsf{p}\}$ with a uniform distribution at each
iteration.
\end{itemize}
We display in Figure~\ref{fig:ex1} the normalized error versus 
execution time.

\begin{figure}[ht!]
\centering
\begin{tabular}{c@{}c@{}}
\begin{tikzpicture}[scale=0.545]
\definecolor{darkgray176}{RGB}{176,176,176}
\begin{axis}[height=8cm,width=13cm, legend columns=1 
cell align={left}, xmin=0, xmax=400, ymin=-100, ymax=2,
xtick distance=100,
ytick distance=25,
x grid style={darkgray176},xmajorgrids,
y grid style={darkgray176},ymajorgrids,
tick label style={font=\Large}, 
legend cell align={left},
legend style={at={(0.02,0.15)},anchor=west},
axis line style=thick,]
\addplot [ultra thick, ngreen]
table {figures/ex2/F1b1.txt};
\addplot [ultra thick, orng]
table {figures/ex2/F2b1.txt};
\addplot [ultra thick, dblue]
table {figures/ex2/F3b1.txt};
\addplot [ultra thick, dashed, dviolet]
table {figures/ex2/CHb1.txt};
\addplot [ultra thick, dashed, dred]
table {figures/ex2/RNb1.txt};
\end{axis}
\end{tikzpicture}&
\begin{tikzpicture}[scale=0.545]
\definecolor{darkgray176}{RGB}{176,176,176}
\begin{axis}[height=8cm,width=13cm, legend columns=1 
cell align={left}, xmin=0, xmax=60, ymin=-100, ymax=2,
xtick distance=20,
ytick distance=25,
x grid style={darkgray176},xmajorgrids,
y grid style={darkgray176},ymajorgrids,
tick label style={font=\Large}, 
legend cell align={left},
legend style={at={(0.02,0.15)},anchor=west},
axis line style=thick,]
\addplot [ultra thick, ngreen]
table {figures/ex2/F1b8.txt};
\addplot [ultra thick, orng]
table {figures/ex2/F2b8.txt};
\addplot [ultra thick, dblue]
table {figures/ex2/F3b8.txt};
%\addplot [ultra thick, dashed, dviolet]
%table {figures/ex2/CHb1.txt};
\addplot [ultra thick, dashed, dred]
table {figures/ex2/RNb8.txt};
\end{axis}
\end{tikzpicture}\\
\small{(a)}&\small{(b)}
\end{tabular}
\caption{Experiment of Section~\ref{sec:52}. 
Normalized error $20\log_{10}(\|x_{1,\nS}-x_\infty\|/
\|x_{1,0}-x_\infty\|)$ (dB) versus execution time (s).
(a): Block size $1$ with $1$ core. 
(b): Block size $8$ with $8$ cores. 
{\color{ngreen} Green}: Framework~1.
{\color{orng} Orange}: Framework~2. 
{\color{dblue} Blue}: Framework~3
with Example~\ref{ex:11}.
{\color{dviolet} Dashed violet}: Algorithm~\eqref{e:72}.
{\color{dred} Dashed red}: Algorithm~\eqref{e:71}.
}
\label{fig:ex1}
\end{figure}
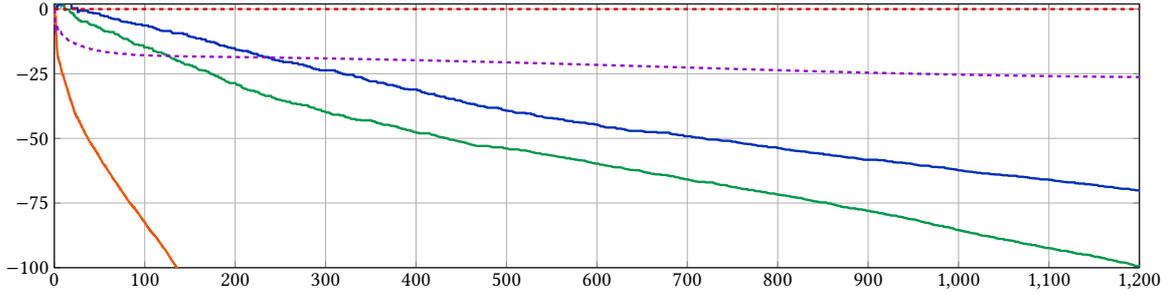

\subsection{Classification using the hinge loss}
\label{sec:53}

We address a binary classification problem. The training data 
samples $(\mathsf{u}_{\kS},\upxi_{\kS})_{1\leq\kS\leq\mathsf{p}}$
are in $\RR^{\mathsf{N}}\times\{-1,1\}$ and the goal is to learn a 
linear classifier $\mathsf{x}\in\HS=\RR^{\mathsf{N}}$. For this
purpose, we solve the instance of Problem~\ref{prob:10}
corresponding to the support vector machine model 
\begin{equation}
\label{e:HL}
\minimize{\mathsf{x}\in\RR^\mathsf{N}}{\frac{\upalpha}{2}
\|\mathsf{x}\|^2+\frac{1}{\mathsf{p}}\sum_{\kS=1}^{\mathsf{p}}
\mathsf{g}_{\kS}(\mathsf{x})},
\end{equation}
where $\upalpha\in\RPP$ and, for every
$\kS\in\{1,\dots,\mathsf{p}\}$, 
$\mathsf{g}_{\kS}\colon\mathsf{x}\mapsto\max\{0,1-\upxi_{\kS}
\scal{\mathsf{x}}{\mathsf{u}_{\kS}}\}$.
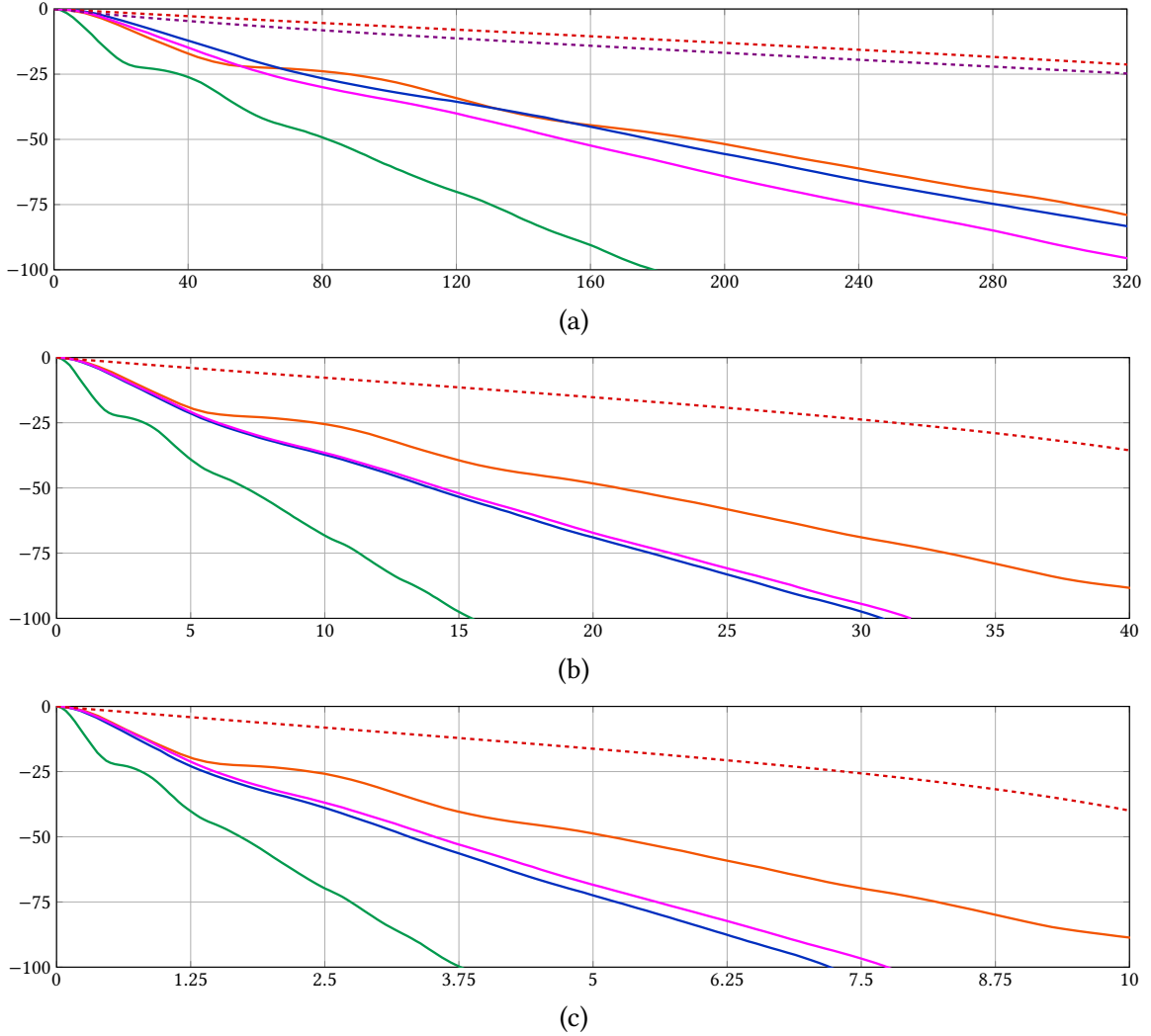
\begin{figure}[b!]
\centering
\begin{tabular}{c@{}c@{}}
\begin{tikzpicture}[scale=0.545]
\definecolor{darkgray176}{RGB}{176,176,176}
\begin{axis}[height=8cm,width=13cm, legend columns=1 
cell align={left}, xmin=0, xmax=200, ymin=-100, ymax=0.00,
xtick distance=40,
ytick distance=25,
x grid style={darkgray176},xmajorgrids,
y grid style={darkgray176},ymajorgrids,
tick label style={font=\Large}, 
legend cell align={left},
legend style={at={(0.02,0.15)},anchor=west},
axis line style=thick,]
\addplot [ultra thick, ngreen]
table {figures/ex3/F1b1.txt};
\addplot [ultra thick, orng]
table {figures/ex3/F2b1.txt};
\addplot [ultra thick, dblue]
table {figures/ex3/F3b1.txt};
\addplot [ultra thick, magenta]
table {figures/ex3/F32b1.txt};
\addplot [ultra thick, dashed, dred]
table {figures/ex3/RNb1.txt};
\addplot [ultra thick, dashed, violet]
table {figures/ex3/CHb1.txt};
\end{axis}
\end{tikzpicture}&%\\
%(a)\\[0.3em]
%\end{figure}
%\begin{figure}
%\centering
\begin{tikzpicture}[scale=0.545]
\definecolor{darkgray176}{RGB}{176,176,176}
\begin{axis}[height=8cm,width=13cm, legend columns=1 
cell align={left}, xmin=0, xmax=25, ymin=-100, ymax=0.00,
xtick distance=5,
ytick distance=25,
x grid style={darkgray176},xmajorgrids,
y grid style={darkgray176},ymajorgrids,
tick label style={font=\Large}, 
legend cell align={left},
legend style={at={(0.02,0.15)},anchor=west},
axis line style=thick,]
\addplot [ultra thick, ngreen]
table {figures/ex3/F1b8.txt};
\addplot [ultra thick, orng]
table {figures/ex3/F2b8.txt};
\addplot [ultra thick, dblue]
table {figures/ex3/F3b8.txt};
\addplot [ultra thick, magenta]
table {figures/ex3/F32b8.txt};
\addplot [ultra thick, dashed, dred]
table {figures/ex3/RNb8.txt};
\end{axis}
\end{tikzpicture}\\
\small{(a)}&\small{(b)}
\end{tabular}
%\end{figure}
%\begin{figure}
%\centering
\begin{tikzpicture}[scale=0.545]
\definecolor{darkgray176}{RGB}{176,176,176}
\begin{axis}[height=8cm,width=13cm, legend columns=1 
cell align={left}, xmin=0, xmax=10, ymin=-100, ymax=0.00,
xtick distance=1.25,
ytick distance=25,
x grid style={darkgray176},xmajorgrids,
y grid style={darkgray176},ymajorgrids,
tick label style={font=\Large}, 
legend cell align={left},
legend style={at={(0.02,0.15)},anchor=west},
axis line style=thick,]
\addplot [ultra thick, ngreen]
table {figures/ex3/F1b32.txt};
\addplot [ultra thick, orng]
table {figures/ex3/F2b32.txt};
\addplot [ultra thick, dblue]
table {figures/ex3/F3b32.txt};
\addplot [ultra thick, magenta]
table {figures/ex3/F32b32.txt};
\addplot [ultra thick, dashed, dred]
table {figures/ex3/RNb32.txt};
\end{axis}
\end{tikzpicture}\\
\small{(c)}
\caption{Experiment of Section~\ref{sec:53}. 
Normalized error $20\log_{10}(\|x_{1,\nS}-x_\infty\|/
\|x_{1,0}-x_\infty\|)$ (dB) versus execution 
time (s).
(a): Block size $1$ with $1$ core. 
(b): Block size $8$ with $8$ cores. 
(c): Block size $32$ with $32$ cores. 
{\color{ngreen} Green}: Framework~1.
{\color{orng} Orange}: Framework~2. 
{\color{nblue} Blue}: Framework~3
with Example~\ref{ex:12}.
{\color{magenta} Magenta}: Framework~3
with Example~\ref{ex:13}.
{\color{dviolet} Dashed violet}: Algorithm~\eqref{e:72}.
{\color{dred} Dashed red}: Algorithm~\eqref{e:71}.
}
\label{fig:ex2}
\end{figure}
In the experiment, $\mathsf{N}=1500$, $\upalpha=1$, 
$\mathsf{p}=750$, and, for every $\kS\in\{1,\dots,\mathsf{p}\}$, 
the entries of $\mathsf{u}_{\kS}$ are i.i.d. samples from a 
$\mathcal{N}(100,10)$ distribution, and
$(\upxi_{\kS})_{1\leq\kS\leq\mathsf{p}}$ are i.i.d. samples from a 
uniform distribution on $\{-1,1\}$. 
Since, for every $\mathsf{x}\in\RR^{\mathsf{N}}$, 
$(\upalpha/2)\|\mathsf{x}\|^2\leq(\upalpha/2)\|\mathsf{x}\|^2+
\sum_{\kS=1}^{\mathsf{p}}\mathsf{g}_{\kS}(\mathsf{x})$, condition
\ref{p:6i} in Proposition~\ref{p:6} holds. 
In addition, for every $\kS\in\{1,\dots,\mathsf{p}\}$, 
$\mathsf{g}_{\kS}$ is real-valued, so that condition \ref{p:6iib}
in Proposition~\ref{p:6} holds as well. This guarantees that 
\eqref{e:HL} is an instance of Problem~\ref{prob:10} and 
we can therefore invoke Corollary~\ref{c:1}. We employ four methods
to solve this problem: Framework~1, Framework~2, and Framework~3
using the operators $\boldsymbol{\mathsf{E}}$ defined in 
Examples~\ref{ex:12} and \ref{ex:13}. 
In the case of Example~\ref{ex:13} in Framework~3, the random
variable $\boldsymbol{\varepsilon}_{0}$ activates indices 
uniformly in $\{1,\dots,2\mathsf{p}+2\}$.
Three experiments are conducted: the
random variable $\boldsymbol{\varepsilon}_0$ produces (a) 1
activation with 1 core; (b) 8 activations with 8 cores, and 
(c) 32 activations with 32 cores.
Given $\upgamma\in\RPP$, the operators 
$(\prox_{\upgamma\mathsf{g}_{\kS}})_{1\leq\kS\leq\mathsf{p}}$ are
computed via \cite[Example~24.37]{Livre1}. 
The inverse operators are explicitly computed in
Example~\ref{ex:inv}\ref{ex:invi}. 

We also compare with:
\begin{itemize}
\item
Algorithm~\eqref{e:72}, which can activate only one operator at
each iteration.
\item
Algorithm~\eqref{e:71}, where the random variable 
$\boldsymbol{\varepsilon}_0$ activates (a) 1; (b) 8;
and (c) 32 indices in $\{1,\dots,\mathsf{p}\}$ with a uniform
distribution at each iteration.
\end{itemize} 
We display in Figure~\ref{fig:ex2} the normalized error versus 
execution time for each instances. The execution time is evaluated 
based on the assumption that the computation corresponding to each 
selected index is assigned to a dedicated core and that all the
cores are working in parallel.

\subsection{Image reconstruction from phase}
\label{sec:54}

In contrast with the previous examples, we consider a data analysis
framework, first proposed in \cite{Siim22}, which requires the
monotone inclusion format of Problem~\ref{prob:1} and is not
reducible to the minimization setting of Problem~\ref{prob:10}.
The goal is to recover
an image in a nonempty closed convex subset
$\mathsf{C}$ of $\HS$ from $\mathsf{p}$ nonlinear observations
$(\mathsf{r}_{\kS})_{1\leq\kS\leq\mathsf{p}}$ produced by Wiener
models, namely, 
\begin{equation}
\label{e:z1}
\text{find}\:\;{\mathsf{x}}\in\mathsf{C}\:\;
\text{such that}\:\;(\forall\kS\in\{1,\ldots,\mathsf{p}\})\:\;
\mathsf{r}_{\kS}=\mathsf{F}_{\kS}(\LS_{\kS}{\mathsf{x}}),
\end{equation}
where each operator $\mathsf{F}_{\kS}\colon\GS_{\kS}\to\GS_{\kS}$
is firmly nonexpansive and each operator 
$\LS_{\kS}\colon\HS\to\GS_{\kS}$ is linear and bounded.
In many instances, the operators
$(\mathsf{F}_{\kS})_{1\leq\kS\leq\mathsf{p}}$ or
$(\LS_{\kS})_{1\leq\kS\leq\mathsf{p}}$ may be imperfectly known or
the model may be corrupted by perturbations and, as a result,
\eqref{e:z1} may not have solutions. A classical approach would be
to relax it into a minimization problem such as the least-squares
model
\begin{equation}
\label{e:z2}
\minimize{\mathsf{x}\in\mathsf{C}}{\sum_{\kS=1}^{\mathsf{p}}
\|\mathsf{F}_{\kS}(\LS_{\kS}\mathsf{x})-\mathsf{r}_{\kS}\|^2}.
\end{equation}
However, because of the nonlinearity of the operators 
$(\mathsf{F}_{\kS})_{1\leq\kS\leq\mathsf{p}}$, the resulting
optimization problem is nonconvex and usually intractable. 
The strategy of \cite{Siim22} consists in relaxing \eqref{e:z1}
into the variational inequality problem
\begin{equation}
\label{e:z3}
\text{find}\:\;\mathsf{x}\in\mathsf{C}\:\;\text{such that}\:\;
(\forall\mathsf{y}\in\mathsf{C})\;\:\sum_{\kS=1}^{\mathsf{p}}
\upalpha_{\kS}\scal{\LS_{\kS}(\mathsf{y}-\mathsf{x})}
{\mathsf{F}_{\kS}(\LS_{\kS}\mathsf{x})-\mathsf{r}_{\kS}}\geq 0,
\end{equation}
where the weights $(\upalpha_{\kS})_{1\leq\kS\leq\mathsf{p}}$ are
in $\RPP$. As shown there,
\eqref{e:z3} is an exact relaxation of \eqref{e:z1} in the sense
that, if \eqref{e:z1} happens to have solutions, they are the same
as those of \eqref{e:z3}. Let us introduce the operators
\begin{equation}
\label{e:z7}
(\forall\kS\in\{1,\ldots,\mathsf{p}\})
\quad\mathsf{B}_{\kS}=\upalpha_{\kS}
\brk{\mathsf{F}_{\kS}-\mathsf{r}_{\kS}},
\end{equation}
which are maximally monotone by \cite[Example~20.30]{Livre1}. Then,
in terms of the normal cone operator of \eqref{e:nc}, \eqref{e:z3}
is equivalent to
\begin{equation}
\label{e:z4}
\text{find}\;\;\mathsf{x}\in\HS\;\;\text{such that}
\;\;0\in\mathsf{N}_{\mathsf{C}}\mathsf{x}+\sum_{\kS=1}^{\mathsf{p}}
\LS_{\kS}^*\brk1{\mathsf{B}_{\kS}(\LS_{\kS}\mathsf{x})}.
\end{equation}
This inclusion problem is now in the format of Problem~\ref{prob:1}
with $\mathsf{A}=\mathsf{N}_{\mathsf{C}}$, which allows us to apply
the algorithms proposed in Sections~\ref{sec:f1}--\ref{sec:f3} to
solve it with guaranteed almost sure convergence of the iterates to
a solution.

\begin{figure}[b!]
\centering
\begin{tabular}{@{}c@{}c@{}c@{}}
\includegraphics[width=4.2cm]{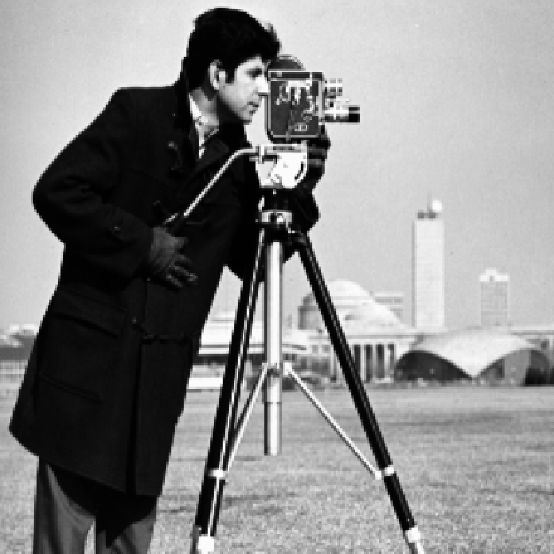}&
\hspace{0.2cm}
\includegraphics[width=4.2cm]{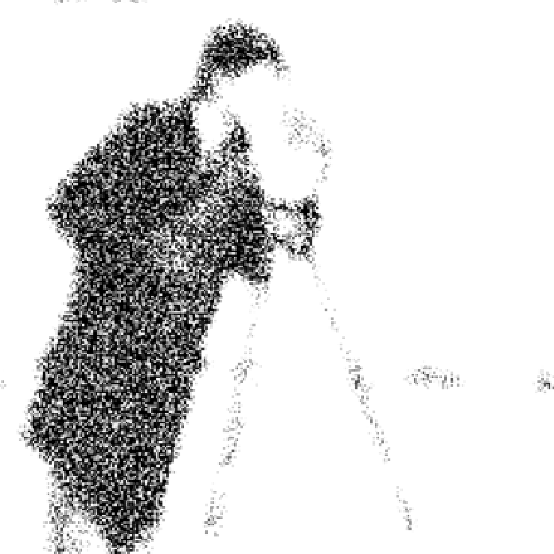}&
\hspace{0.2cm}
\includegraphics[width=4.2cm]{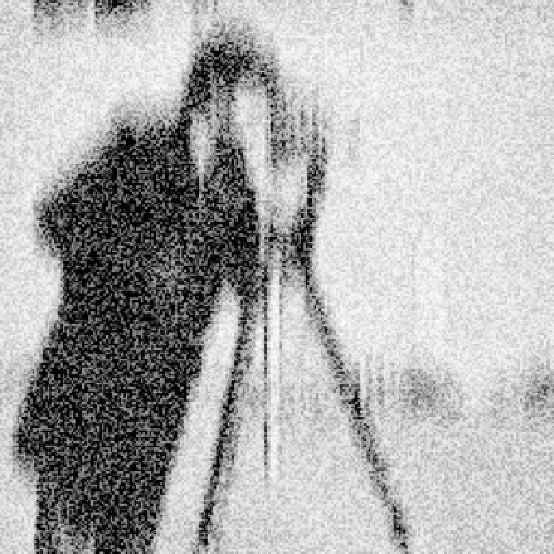}\\
\small{(a)} & \small{(b)} & \small{(c)}
\end{tabular} 
\begin{tabular}{@{}c@{}c@{}}
\includegraphics[width=4.2cm]{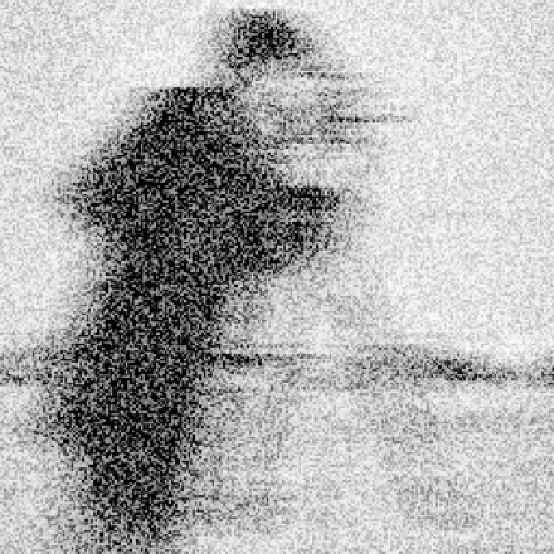}&
\hspace{0.2cm}
\includegraphics[width=4.2cm]{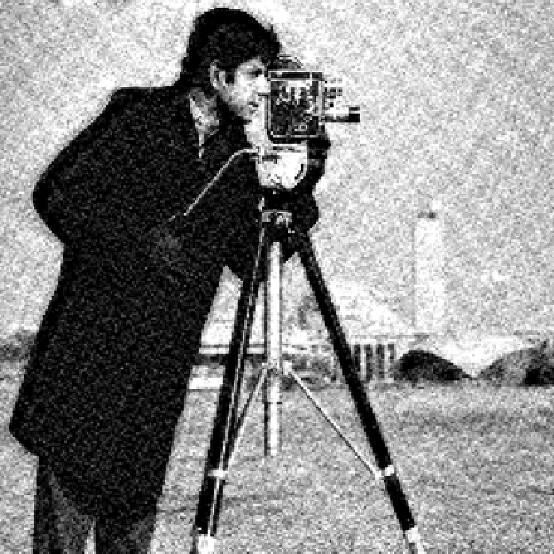}\\
\small{(d)} & \small{(e)} 
\end{tabular} 
\caption{Experiment of Section~\ref{sec:54}:
(a): Original image $\overline{\mathsf{x}}$.
(b): Degraded image $\mathsf{r}_{1}$.
(c): Degraded image $\mathsf{r}_{21}$.
(d): Degraded image $\mathsf{r}_{41}$.
(e): Recovered image.}
\label{fig:e4imag}
\end{figure}

The specific image recovery problem under consideration is similar 
to that of \cite[Section~5.1]{Siim22}. The goal is to recover the 
original image $\overline{\mathsf{x}}\in\HS=\RR^{\mathsf{N}}$ 
($\mathsf{N}=256^2$) of Figure~\ref{fig:e4imag}(a) from the
following prior knowledge and $\mathsf{p}=62$ observations:
\begin{enumerate}
\item
\label{e:o6}
Bounds on pixel values: $\overline{\mathsf{x}}\in\mathsf{C}=
\left[0,255\right]^{\mathsf{N}}$.
\item
\label{e:o1}
The degraded images $(\mathsf{r}_{\kS})_{1\leq\kS\leq 20}$ in
$\RR^{\mathsf{N}}$ are obtained via a blurring process,
addition of noise, and finally clipping.
In terms of the model \eqref{e:z1}, for every
$\kS\in\{1,\dots,20\}$, $\mathsf{G}_{\kS}=\RR^{\mathsf{N}}$, 
$\mathsf{r}_{\kS}=
\mathsf{F}_{\kS}(\LS_{\kS}\overline{\mathsf{x}}+\mathsf{w}_{\kS})$,
where $\LS_{\kS}$ performs convolution with a Gaussian kernel with 
a standard deviation of $3$, 
$\mathsf{w}_{\kS}\in\RR^{\mathsf{N}}$ 
is a noise vector with i.i.d. entries uniformly distributed in
$[-50,50]$, and 
\begin{equation}
\label{e:ex4R1}
\mathsf{F}_{\kS}\colon
\RR^{\mathsf{N}}\to\RR^{\mathsf{N}}\colon
\mathsf{y}\mapsto\proj_{\mathsf{C}_1}\mathsf{y},\quad
\text{where}\quad
\mathsf{C}_1=\left[0,60\right]^{\mathsf{N}}
\end{equation}
models a hard clipping process. This nonlinear measurement process
models a low-quality image acquired by a device which saturates at
photon counts beyond a certain threshold. As an example, the first
degraded image $\mathsf{r}_{1}$ is shown
in Figure~\ref{fig:e4imag}(b). 
\item
\label{e:o2}
The degraded images $(\mathsf{r}_{\kS})_{21\leq\kS\leq40}$ in 
$\RR^{\mathsf{N}}$ are obtained by a process similar to \ref{e:o1}.
Here, for every $\kS\in\{21,\dots,40\}$, the blurring operator 
$\LS_{\kS}$ performs a convolution in the vertical direction with 
a uniform kernel of length $20$, the entries of the
noise vector $\mathsf{w}_{\kS}\in\RR^{\mathsf{N}}$ are i.i.d 
and uniformly distributed in $[-70,70]$, and pixel values beyond 
$90$ are soft-clipped by 
\begin{equation}
\label{e:ex4R2}
\mathsf{F}_{\kS}\colon
\RR^{\mathsf{N}}\to\RR^{\mathsf{N}}\colon
(\upeta_{\jS})_{1\leq\jS\leq\mathsf{N}}
\mapsto\Biggl(\dfrac{90\max\{0,\upeta_{\jS}\}}{90+|\upeta_{\jS}|}
\Biggr)_{1\leq\jS\leq\mathsf{N}}.
\end{equation}
As an example, the degraded image $\mathsf{r}_{21}$ is shown in 
Figure~\ref{fig:e4imag}(c).
\item
The degraded images $(\mathsf{r}_{\kS})_{41\leq\kS\leq60}$ in 
$\RR^{\mathsf{N}}$ are obtained through an image
formation process similar to that of \ref{e:o2}. For every
$\kS\in\{41,\dots,60\}$, the blurring operator $\LS_{\kS}$ now 
performs a convolution in the horizontal direction with a uniform 
kernel of length $24$, and the entries of
the noise vector $\mathsf{w}_{\kS}\in\RR^{\mathsf{N}}$ are i.i.d 
and uniformly distributed in $[-90,90]$. For every
$\kS\in\{41,\dots,60\}$, pixel values beyond $90$ are soft-clipped
by the same operator $\mathsf{F}_{\kS}$ as in \eqref{e:ex4R2}.
\item
The mean pixel value $\uprho=137$ of $\overline{\mathsf{x}}$ is
known. This information is imposed on a candidate solution
$\mathsf{x}\in\RR^{\mathsf{N}}$ via the equation 
$\scal{\mathsf{x}}{\boldsymbol{1}}=\mathsf{N}\uprho$, where
$\boldsymbol{1}=(1,\ldots,1)\in\RR^{\mathsf{N}}$, which corresponds
to the model 
$\mathsf{r}_{61}=\mathsf{F}_{61}(\LS_{61}{\mathsf{x}})$,
with $\GS_{61}=\RR$, $\LS_{61}=\scal{\cdot}{\boldsymbol{1}}$,
$\mathsf{r}_{61}=\mathsf{N}\uprho$, and $\mathsf{F}_{61}=\Id$.
\item
The phase $\uptheta\in\left[-\uppi,\uppi\right]^{\mathsf{N}}$ of
the 2-D discrete Fourier transform of a noise-corrupted version of
$\overline{\mathsf{x}}$, i.e., $\uptheta=\angle
\dft(\overline{\mathsf{x}}+\mathsf{w}_{62})$, where 
$\mathsf{w}_{62}\in\RR^{\mathsf{N}}$ is uniformly distributed in 
$[-3,3]$. This information is enforced by forcing a candidate 
solution to lie in the closed convex set
$\mathsf{C}_{62}=\menge{\mathsf{x}\in\RR^{\mathsf{N}}}
{\angle\dft(\mathsf{x})=\uptheta}$, i.e., by enforcing the
constraint $\mathsf{x}=\proj_{\mathsf{C}_{62}}\mathsf{x}$.
This constraint corresponds to the model 
$\mathsf{r}_{62}=\mathsf{F}_{62}(\LS_{62}{\mathsf{x}})$, with 
$\GS_{62}=\RR^{\mathsf{N}}$, $\LS_{62}=\Id$,
$\mathsf{r}_{62}=\mathsf{0}$, and 
$\mathsf{F}_{62}=\Id-\proj_{\mathsf{C}_{62}}$, that is 
\cite{Youl87},
\begin{equation}
\mathsf{F}_{62}\colon\RR^{\mathsf{N}}\to\RR^{\mathsf{N}}
\colon\mathsf{x}\mapsto \mathsf{x}-
\idft\brk2{\bigl|\dft{\mathsf{x}}\bigr|
\max\bigl\{\cos\bigl(\angle(\dft{\mathsf{x}})-\uptheta\bigr),0
\bigr\}
\exp(\imath\uptheta)}.
\end{equation}
\end{enumerate}
Due to the presence of the measurement errors
$(\mathsf{w}_{\kS})_{1\leq\kS\leq 60}$ and $\mathsf{w}_{62}$, 
problem \eqref{e:z1} is inconsistent and we approximate it by 
\eqref{e:z7}--\eqref{e:z4}, where 
$\upalpha_1=\cdots=\upalpha_{62}=1$.
To implement the algorithms of Sections~\ref{sec:f1}--\ref{sec:f3},
we require the expressions of the resolvent of the operators
$\mathsf{N}_{\mathsf{C}}$ and 
$(\mathsf{B}_{\kS})_{1\leq\kS\leq\mathsf{p}}$. The former is just
\begin{equation}
\mathsf{J}_{\mathsf{N}_{\mathsf{C}}}=\proj_{\mathsf{C}}\colon
(\upxi_{\jS})_{1\leq\jS\leq\mathsf{N}}\mapsto\brk1{
\min\{\max\{0,\upxi_{\jS}\},255\}}_{1\leq\jS\leq\mathsf{N}}.
\end{equation}
For the remaining cases, it follows from \eqref{e:z7} that
the operators $(\mathsf{B}_{\kS})_{1\leq\kS\leq\mathsf{p}}$ are
firmly nonexpansive. We therefore invoke Lemma~\ref{l:600} to
compute their resolvents. Let $\upgamma\in\RPP$ and note that
\cite[Proposition~23.17(ii)]{Livre1} entails that
\begin{equation}
\label{e:cag}
(\forall\kS\in\{1,\ldots,\mathsf{p}\})\quad
\mathsf{J}_{\upgamma\mathsf{B}_{\kS}}=
\mathsf{J}_{\upgamma\mathsf{F}_{\kS}}
(\cdot+\upgamma\mathsf{r}_{\kS}).
\end{equation}
First, set $\kS\in\{1,\dots,20\}$. Then 
$\mathsf{F}_{\kS}=\proj_{\mathsf{C}_1}
=\mathsf{J}_{\mathsf{N}_{\mathsf{C}_1}}$.
Hence, upon setting 
$\mathsf{r}_{\kS}=(\uprho_{\kS,\jS})_{1\leq\jS\leq\mathsf{N}}$, we 
deduce from Lemma~\ref{l:600}\ref{l:600ii} and \eqref{e:cag} that
\begin{equation}
\mathsf{J}_{\upgamma\mathsf{B}_{\kS}} 
\colon(\upxi_{\jS})_{1\leq\jS\leq\mathsf{N}}\mapsto 
\brk3{\upxi_{\jS}+\upgamma\uprho_{\kS,\jS}-
\upgamma\min\Bigl\{\max\Bigl\{0,
\dfrac{\upxi_{\jS}+\upgamma\uprho_{\kS,\jS}}{1+\upgamma}\Bigr\},60
\Bigr\}}_{1\leq\jS\leq\mathsf{N}}.
\end{equation}
On the other hand, for $\kS\in\{21,\dots,60\}$,
$\mathsf{J}_{\upgamma\mathsf{F}_{\kS}}
\colon(\upeta_{\jS})_{1\leq\jS\leq\mathsf{N}}\mapsto
(\upzeta_{\jS})_{1\leq\jS\leq\mathsf{N}}$, where
\begin{equation}
(\forall\jS\in\{1,\dots,\mathsf{N}\})\quad
\upzeta_{\jS}=
\begin{cases}
\dfrac{\upeta_{\jS}-90(1+\upgamma)
+\sqrt{|\upeta_{\jS}-90(1+\upgamma)|^2+360\upeta_{\jS}}}{2},
&\text{if}\;\;\upeta_{\jS}\geq0;\\[3mm]
\upeta_{\jS},&\text{otherwise.}
\end{cases}
\end{equation}
Thus, we derive from \eqref{e:cag} the expressions for
$\mathsf{J}_{\upgamma\mathsf{B}_{\kS}}$. 
Next, we have
$\mathsf{J}_{\upgamma\mathsf{B}_{61}}=(1+\upgamma)^{-1}(\cdot
+\upgamma\mathsf{N}\uprho)$ as a result of
$\mathsf{J}_{\upgamma\mathsf{F}_{61}}=(1+\upgamma)^{-1}\Id$ and
\eqref{e:cag}. Finally, we deduce from 
\cite[Proposition~23.20]{Livre1} that 
\begin{equation}
\label{e:cag2}
\mathsf{F}_{62}=\Id-\proj_{\mathsf{C}_{62}}
=\mathsf{J}_{\mathsf{N}_{\mathsf{C}_{62}}^{-1}}
\quad\text{and}\quad
\mathsf{J}_{(1+\upgamma)^{-1}\mathsf{N}_{\mathsf{C}_{62}}^{-1}}
\circ (1+\upgamma)^{-1}\Id
=\dfrac{\Id-\mathsf{J}_{\mathsf{N}_{\mathsf{C}_{62}}}}{1+\upgamma}
=\dfrac{\Id-{\proj_{\mathsf{C}_{62}}}}{1+\upgamma}.
\end{equation}
Hence, it follows from Lemma~\ref{l:600}\ref{l:600ii} that
$\mathsf{J}_{\upgamma\mathsf{B}_{62}}
=\mathsf{J}_{\upgamma\mathsf{F}_{62}}=(1+\upgamma)^{-1}
(\Id+\upgamma\proj_{\mathsf{C}_{62}})$, i.e.,
\begin{equation}
\mathsf{J}_{\upgamma\mathsf{B}_{62}} 
\colon\mathsf{y}\mapsto\dfrac{\mathsf{y}}{1+\upgamma}
+\dfrac{\upgamma}{1+\upgamma}\idft\brk2{\bigl|\dft{\mathsf{y}}
\bigr|\max\bigl\{\cos\bigl(\angle\bigl(\dft{\mathsf{y}\bigr)-
\uptheta\bigr),0\bigr\}}
\exp(\imath\uptheta)}.
\end{equation}
Lastly, we implement the inversions of linear operators using 
the fast Fourier transform and Example~\ref{ex:inv}\ref{ex:invii}.

We employ the three frameworks of 
Sections~\ref{sec:f1}--\ref{sec:f3} to solve \eqref{e:z4}, 
where Proposition~\ref{p:2} uses the operator
$\boldsymbol{\mathsf{E}}$ defined in Example~\ref{ex:11}. 
Two experiments are conducted: the random variable 
$\boldsymbol{\varepsilon}_0$ produces (a) 1 activation with 1
core; and (b) 8 activations with 8 cores.
We compare with Algorithm~\eqref{e:71}, where the 
random variable $\boldsymbol{\varepsilon}_0$ activates (a) 1;
and (b) 8 indices in $\{1,\dots,\mathsf{p}\}$ with a uniform 
distribution. The solution produced by Framework~3 is shown in
Figure~\ref{fig:e4imag}(e). We display in Figure~\ref{fig:ex4} the
normalized error versus execution time on a single processor
machine.

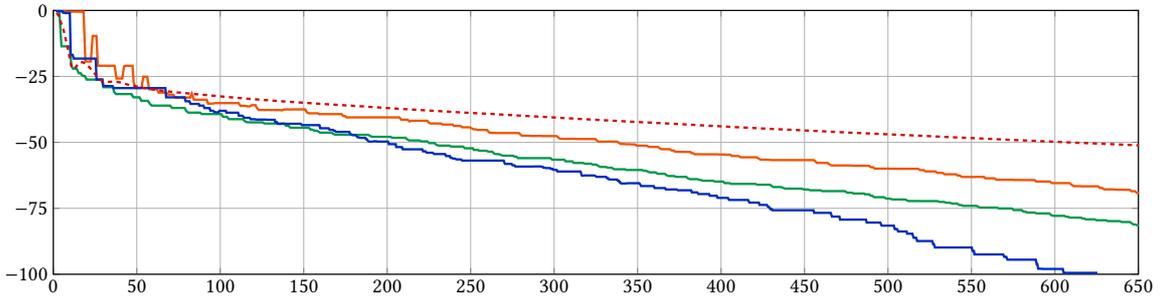
\begin{figure}[ht!]
\centering
\begin{tabular}{c@{}c@{}}
\begin{tikzpicture}[scale=0.545]
\definecolor{darkgray176}{RGB}{176,176,176}
\begin{axis}[height=8cm,width=13cm, legend columns=1 
cell align={left}, xmin=0, xmax=600, ymin=-100, ymax=0.00,
xtick distance=200,
ytick distance=25,
x grid style={darkgray176},xmajorgrids,
y grid style={darkgray176},ymajorgrids,
tick label style={font=\Large}, 
legend cell align={left},
legend style={at={(0.02,0.15)},anchor=west},
axis line style=thick,]
\addplot [ultra thick, ngreen]
table {figures/ex4/F1b1.txt};
\addplot [ultra thick, orng]
table {figures/ex4/F2b1.txt};
\addplot [ultra thick, dblue]
table {figures/ex4/F3b1.txt};
\addplot [ultra thick, dashed, dred]
table {figures/ex4/RNb1.txt};
\end{axis}
\end{tikzpicture}
&
\begin{tikzpicture}[scale=0.545]
\definecolor{darkgray176}{RGB}{176,176,176}
\begin{axis}[height=8cm,width=13cm, legend columns=1 
cell align={left}, xmin=0, xmax=160, ymin=-100, ymax=0.00,
xtick distance=40,
ytick distance=25,
x grid style={darkgray176},xmajorgrids,
y grid style={darkgray176},ymajorgrids,
tick label style={font=\Large}, 
legend cell align={left},
legend style={at={(0.02,0.15)},anchor=west},
axis line style=thick,]
\addplot [ultra thick, ngreen]
table {figures/ex4/F1b8.txt};
\addplot [ultra thick, orng]
table {figures/ex4/F2b8.txt};
\addplot [ultra thick, dblue]
table {figures/ex4/F3b8.txt};
\addplot [ultra thick, dashed, dred]
table {figures/ex4/RNb8.txt};
\end{axis}
\end{tikzpicture}
\\
\small{(a)}&\small{(b)}
\end{tabular}
\caption{Experiment of Section~\ref{sec:54}:
Normalized error $20\log_{10}(\|x_{1,\nS}-x_\infty\|/
\|x_{1,0}-x_\infty\|)$ (dB) versus execution 
time (s).
(a): Block size $1$ with $1$ core. 
(b): Block size $8$ with $8$ cores. 
{\color{ngreen} Green}: Framework~1.
{\color{orng} Orange}: Framework~2. 
{\color{dblue} Blue}: Framework~3
with Example~\ref{ex:11}.
{\color{dred} Dashed red}: Algorithm~\eqref{e:71}.}
\label{fig:ex4}
\end{figure}

\subsection{Discussion}
\label{sec:55}

The three proposed frameworks differ in terms of storage
requirements, use of resolvent operators, and use of linear
operators.
\begin{itemize}
\item
Framework~1: It stores $2\mathsf{p}+3$ vectors.
In addition, for each of the $\mathsf{p}+1$ random activation
indices, there is one resolvent evaluation.
\item
Framework~2: It stores $4\mathsf{p}+5$ vectors. Out of the
$\mathsf{p}+2$ random activation indices, those in
$\{1,\ldots,\mathsf{p}+1\}$ involve the evaluation of a resolvent.
In addition, the linear operators are used only if index 
$\mathsf{p}+2$ is activated.
\item
Framework~3: It stores $2\mathsf{p}+2\mathsf{r}+2$ vectors. 
Moreover, out of the $\mathsf{p}+\mathsf{r}+1$ random activation 
indices, those in $\{1,\ldots,\mathsf{p}+1\}$ involve the 
evaluation of a resolvent operator, while those in 
$\{\mathsf{p}+2,\ldots,\mathsf{p}+\mathsf{r}+1\}$
do not require a resolvent evaluation. 
\end{itemize}

Although Framework~1 is the most efficient in terms of storage, it
may not always be the fastest, especially when resolvents are
computationally expensive. For instance, in Section~\ref{sec:54},
where it is the case, Framework~3 is the fastest. Framework~2 has
an advantage when the linear operators are costly, which is the
case in Section~\ref{sec:52}. Finally we observe that the existing
algorithms \eqref{e:72} and \eqref{e:71} which, as discussed in
Section~\ref{sec:22}, do not satisfy condition 
{\bfseries R2}--{\bfseries R3}, are consistently slower than the
methods proposed in Sections~\ref{sec:f1}--\ref{sec:f3}.

\end{document}